\documentclass[12pt,twoside]{amsart}
\usepackage{amssymb}
\usepackage{amscd}

\newcommand{\Proj}[0]{{\operatorname{Proj}}}

\newcommand{\Bs}[0]{{\operatorname{Bs}}}

\newcommand{\Supp}[0]{{\operatorname{Supp}}}

\newcommand{\mult}[0]{{\operatorname{mult}}}
\newcommand{\Ex}[0]{{\operatorname{Ex}}}
\newcommand{\R}{$\mathbb{R}$}
\newcommand{\Q}{$\mathbb{Q}$}
\newcommand{\Z}{$\mathbb{Z}$}
\newtheorem{thm}{Theorem}[section]

\newtheorem{cor}[thm]{Corollary}
\newtheorem{prop}[thm]{Proposition}

\newtheorem{fact}[thm]{Fact}
\newtheorem{ex}[thm]{Example}

\theoremstyle{definition}
\newtheorem{ques}[thm]{Question}
\newtheorem{dfn}[thm]{Definition}

\newtheorem{rem}[thm]{Remark}  
         
\newtheorem{step}{Step}
\newtheorem{nasi}[thm]{}


\begin{document}

\title[X-METHOD]{The X-method for klt surfaces in positive characteristic} 
\author{Hiromu Tanaka} 
\address{Department of Mathematics, Faculty of 
Science, Kyoto University, 
Kyoto 606-8502 Japan} 
\email{tanakahi@math.kyoto-u.ac.jp}
\thanks{The author is partially supported by 
the Research Fellowships of the Japan Society for the Promotion of Science for Young Scientists (24-1937). }

\begin{abstract}
In this paper, we establish a weak version of 
the Kodaira vanishing theorem for surfaces in positive characteristic. 
As an application, 
we obtain some fundamental theorems in the minimal model theory for klt surfaces. 
\end{abstract}

\maketitle

\tableofcontents

\setcounter{section}{-1}

\section{Introduction}
The X-method is a method 
to prove some fundamental theorems 
in the minimal model theory of characteristic zero.  
For example, in characteristic zero, 
we can show the basepoint free theorem 
by using the X-method, see for example \cite[Chapter~3]{KMM} and \cite[Chapter~3]{KM}. 
The X-method mainly depends on two tools: 
resolution of singularities and 
the Kawamata--Viehweg vanishing theorem, which is a generalization of 
the Kodaira vanishing theorem. 
In positive characteristic, we can use resolution of singularities 
in the case where the dimension of the variety is two or three 
(cf. \cite{CP}). 
But, in positive characteristic, 
there exist counter-examples to 
the Kodaira vanishing theorem 
even in the case where the dimension of the variety is two 
(cf. \cite{Raynaud}). 
Thus, we consider the following question. 
Can we establish a vanishing theorem 
in positive characteristic which 
is sufficient for the X-method? 
If the dimension of the variety is two, then 
we have an affirmative answer. 

\begin{thm}[weak Kodaira vanishing theorem]\label{0kodairatype}
Let $X$ be a smooth projective surface 
over an algebraically closed field 
of positive characteristic. 
Let $A$ be an ample Cartier divisor. 
Let $N$ be a nef Cartier divisor which 
is not numerically trivial. 
If $i>0$ and $m\gg 0$, then 
$$H^i(X, K_X+A+mN)=0.$$
\end{thm}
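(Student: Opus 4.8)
The plan is to treat the two nonzero cohomological degrees separately and to extract positivity from the single hypothesis that $N$ is nef and not numerically trivial.

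First I would record the two basic reductions. Since $X$ is a surface only $i=1,2$ occur. For $i=2$, Serre duality gives $H^2(X,K_X+A+mN)\cong H^0(X,-A-mN)^{\vee}$, and $A+mN$ is ample for every $m\ge 0$ (ample plus nef), so $-A-mN$ has no global sections and the group vanishes. For $i=1$, the Hodge index theorem shows that a nef divisor with $N\cdot H=0$ for some ample $H$ must be numerically trivial; hence our hypotheses force $A\cdot N>0$. By Serre duality the target $H^1(X,K_X+A+mN)$ is dual to $H^1(X,-A-mN)$, and it is this intersection number $A\cdot N>0$ that makes the relevant degrees grow linearly in $m$.

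Geometrically I would split according to $N^2$. If $N^2=0$ and $N$ is semiample, the associated morphism is a fibration $f\colon X\to B$ onto a smooth curve with $N\sim f^{*}P$ for an ample divisor $P$ on $B$. For every fibre $F$ adjunction gives $K_F=(K_X+F)|_F=K_X|_F$, so $(K_X+A)|_F=K_F+A|_F$ with $A|_F$ of positive degree; curve Serre duality then yields $H^1(F,(K_X+A)|_F)=0$ on every fibre, whence $R^1f_{*}\mathcal{O}_X(K_X+A)=0$ by cohomology and base change. The projection formula and the vanishing of $R^1f_*$ reduce $H^1(X,K_X+A+mN)$ to $H^1(B,f_{*}\mathcal{O}_X(K_X+A)\otimes\mathcal{O}_B(mP))$, which vanishes for $m\gg 0$ by Serre vanishing on the curve $B$. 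If instead $N^2>0$, then $N$ is big and nef; Kodaira's lemma (valid in every characteristic) lets me write $m_1N\sim A'+E$ with $A'$ ample and $E$ effective, and I would run a restriction argument along the components of $E$. On each such curve the restriction of $K_X+A+mN$ has degree growing linearly in $m$ because $A'\cdot(\text{component})>0$, so curve Serre duality kills the $H^1$ contributions, while the residual ``large ample plus nef'' term is handled by Serre (or Fujita) vanishing; an induction on the number of components then gives the claim.

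The \textbf{main obstacle} is that in positive characteristic a nef divisor need not be semiample, so the fibration (or birational contraction) used above is not available in general. To circumvent this I would exploit Frobenius, using that only $m\gg 0$ is required. Taking $m=p^{e}$ and writing $\mathcal{O}_X(p^{e}N)=(F^{e})^{*}\mathcal{O}_X(N)$ for the $e$-th absolute Frobenius $F^{e}\colon X\to X$, the projection formula and the fact that $F^{e}$ is a homeomorphism give
\[
H^1\!\left(X,K_X+A+p^{e}N\right)\;\cong\;H^1\!\left(X,\mathcal{O}_X(N)\otimes F^{e}_{*}\mathcal{O}_X(K_X+A)\right).
\]
The point is that the sheaf $F^{e}_{*}\mathcal{O}_X(K_X+A)$ becomes increasingly positive as $e$ grows, so one expects its twist by the fixed nef line bundle $\mathcal{O}_X(N)$ to have vanishing $H^1$ for $e\gg 0$; establishing this is really a Fujita-type vanishing statement, uniform over the nef class, proved through Frobenius rather than through Kawamata--Viehweg. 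The two remaining technical points I would expect to absorb most of the work are proving this uniform vanishing for $F^{e}_{*}\mathcal{O}_X(K_X+A)$, and upgrading the conclusion from the subsequence $m=p^{e}$ to all sufficiently large $m$ (for instance by combining the Frobenius vanishing with the monotonicity coming from restriction sequences, or by applying the uniform statement directly to the nef divisors $D=mN$).
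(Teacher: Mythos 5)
Your $i=2$ case is fine and agrees with the paper (Serre duality plus $H^0$ of a divisor negative against an ample class; the paper's Proposition~2.3 does this for arbitrary coherent sheaves, which turns out to matter below). But for $i=1$, which is the entire content of the theorem, your plan has a genuine gap: the Frobenius reduction you propose is circular. By the projection formula and the finiteness of $F^e$,
$$H^1\bigl(X,\ \mathcal O_X(N)\otimes F^e_*\mathcal O_X(K_X+A)\bigr)\ \cong\ H^1\bigl(X,\ K_X+A+p^eN\bigr),$$
so the ``uniform Fujita-type vanishing for $F^e_*\mathcal O_X(K_X+A)$'' that you defer as a technical point is literally the statement to be proved, restricted to $m=p^e$; the reduction gains nothing. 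Worse, the heuristic that $F^e_*\mathcal O_X(K_X+A)$ ``becomes increasingly positive'' is false in the relevant sense: taking the trivial twist, $H^1(X, F^e_*\mathcal O_X(K_X+A))\cong H^1(X,K_X+A)$ for every $e$, and this is nonzero on Raynaud's surfaces. So no positivity of the pushforward by itself can suffice; the non-trivial nef direction must enter through a different mechanism. (Your geometric fallback also does not close the big case: on a component $C$ of $E$ with $N\cdot C=0$ --- such components exist whenever $N$ is big but not ample --- the degree of $(K_X+A+mN)|_C$ does not grow with $m$, and the residual term $K_X+A+A'+(m-m_1)N$ has bounded ample part, so Serre or Fujita vanishing does not apply to it; you are again facing the same kind of statement you set out to prove.)

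The missing idea, and the paper's actual engine (Proposition~2.4), is the trace map of Frobenius: for $X$ smooth, $F^e_*\omega_X\to\omega_X$ is surjective, giving a short exact sequence $0\to\mathcal B_e\to F^e_*\omega_X\to\omega_X\to 0$. Twisting by $L:=A+mN$ and using $F^e_*\omega_X\otimes\mathcal O_X(L)\cong F^e_*\bigl(\omega_X(p^eL)\bigr)$, the middle term contributes $H^1(X,\omega_X(p^eA+p^emN))$; here --- unlike in your reduction --- the \emph{ample} part is amplified to $p^eA$, so Fujita's theorem, being uniform over the nef summand $p^emN$, kills this for one fixed $e$ and all $m\geq 0$ simultaneously. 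The long exact sequence then reduces the theorem to $H^2(X,\mathcal B_e(A+mN))=0$, a top-cohomology statement about the fixed coherent sheaf $\mathcal B_e$: reduce to line bundles, apply Serre duality, and use $N\cdot(\text{ample})>0$ (this is exactly where ``not numerically trivial'' enters) to make the dual $H^0$ vanish for $m\gg 0$ --- your own $i=2$ argument, now applied to $\mathcal B_e$ via Proposition~2.3. Note that this also dissolves your second open point: since $e$ is fixed and $m$ varies freely in the nef slot of Fujita's theorem, the conclusion holds for all $m\gg 0$ at once, with no need to upgrade from the subsequence $m=p^e$.
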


Moreover, by a standard argument, 
we can generalize this theorem 
to a vanishing theorem of Kawamata--Viehweg type or Nadel type.

\begin{thm}[weak Kawamata--Viehweg vanishing theorem]\label{0kvvtype}
Let $X$ be a smooth projective surface over an algebraically closed field 
of positive characteristic. 
Let $A$ be an ample \R-divisor whose fractional part is simple normal crossing. 
Let $N$ be a nef Cartier divisor which is not numerically trivial. 
If $i>0$ and $m\gg 0$, then 
$$H^i(X, K_X+\ulcorner A\urcorner+mN)=0.$$
\end{thm}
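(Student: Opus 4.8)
The plan is to derive the Kawamata-Viehweg type statement (Theorem 0.3) from the already-established weak Kodaira vanishing (Theorem 0.2). The author explicitly says "by a standard argument," so I should look for the standard reduction from a nice Cartier setting to an $\mathbb{R}$-divisor with simple normal crossing fractional part.

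First I would reduce from $\mathbb{R}$-coefficients to $\mathbb{Q}$-coefficients. Since $A$ is ample and we only care about the round-up $\ulcorner A \urcorner$, I can perturb $A$ slightly to a $\mathbb{Q}$-divisor $A'$ that is still ample, still has simple normal crossing fractional part, and satisfies $\ulcorner A' \urcorner = \ulcorner A \urcorner$; here the key point is that ampleness is an open condition and the round-up is locally constant under small perturbations away from integer values. So it suffices to treat the case where $A$ is an ample $\mathbb{Q}$-divisor.

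Let me write $D = \ulcorner A \urcorner$, so $D$ is Cartier and $D = A + \{-A\}$ where $\{-A\} = D - A$ is an effective $\mathbb{Q}$-divisor with simple normal crossing support and coefficients in $[0,1)$. The standard tool here is a cyclic cover. Choose an integer $r > 0$ clearing the denominators of the fractional part, and construct the associated degree-$r$ cyclic cover $\pi : Y \to X$ branched along the support of $\{-A\}$. Because the branch divisor has simple normal crossing support, $Y$ is smooth (or can be made smooth after the standard construction), and $\pi$ is finite. The point is that on $Y$, the pullback situation becomes genuinely Cartier: one computes $\pi_* \mathcal{O}_Y = \bigoplus_{j} \mathcal{O}_X(-\ulcorner jA \urcorner)$ or a similar decomposition so that $H^i(X, K_X + \ulcorner A \urcorner + mN)$ appears as a direct summand of $H^i(Y, K_Y + \pi^*(mN) + (\text{ample Cartier}))$.

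The hard part will be matching up the positivity hypotheses on $Y$ so that Theorem 0.2 applies. I must check that $\pi^* N$ remains nef and not numerically trivial — nefness is preserved under pullback by a finite morphism, and non-triviality is preserved because $\pi$ is surjective with $\pi^* N \cdot \pi^* C = (\deg \pi)(N \cdot C)$ for a curve $C$ with $N \cdot C > 0$. I also need the remaining Cartier divisor accompanying $K_Y$ in the cohomology to be genuinely ample on $Y$, which follows from ampleness of $A$ on $X$ together with the Hurwitz-type formula $K_Y = \pi^*(K_X + \{-A\})$ for the cover. Once $Y$ is smooth projective, the accompanying divisor is ample Cartier, and $\pi^* N$ is nef and not numerically trivial, Theorem 0.2 gives $H^i(Y, K_Y + A_Y + m \pi^* N) = 0$ for $i > 0$ and $m \gg 0$; since the group on $X$ is a direct summand (using that $\pi$ is finite and $\mathrm{char}$ does not divide $r$, so that the trace splitting is available), the vanishing on $X$ follows. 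The one subtlety to watch is the positive-characteristic constraint $p \nmid r$ needed to split $\pi_* \mathcal{O}_Y$; if $p \mid r$ one can enlarge $r$ to a multiple coprime to $p$ while preserving the required round-up identities, so this causes no real obstruction.
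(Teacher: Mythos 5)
Your proposal is correct and follows essentially the same route as the paper: Step~1 of the proof of Theorem~\ref{kvvtype} likewise perturbs $A$ to an ample $\mathbb{Q}$-divisor whose fractional-part denominators are prime to $p$, passes to a finite cover on which everything becomes integral (noting, as you do, that the cover is independent of $m$ since $N$ is Cartier, and that pullback preserves ampleness and nef-but-not-numerically-trivial), and concludes by the weak Kodaira vanishing (Proposition~\ref{h1proto}) upstairs. The only packaging difference is that the paper invokes the Kawamata covering lemma (Proposition~\ref{cycliccover}, i.e.\ \cite[Theorem~1-1-1]{KMM}) and takes Galois invariants, which directly produces a \emph{smooth} cover over the simple normal crossing branch locus --- precisely the point your bare cyclic cover glosses over with \lq\lq can be made smooth after the standard construction.''
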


\begin{thm}[weak Nadel vanishing theorem]\label{0nadeltype}
Let $X$ be a normal projective surface 
over an algebraically closed field 
of positive characteristic. 
Let $\Delta$ be an \R-divisor 
such that $K_X+\Delta$ is \R-Cartier. 
Let $N$ be a nef Cartier divisor which 
is not numerically trivial. 
Let $L$ be a Cartier divisor such that 
$L-(K_X+\Delta)$ is nef and big. 
If $i>0$ and $m\gg 0$, then 
$$H^i(X, \mathcal O_X(L+mN)\otimes 
\mathcal J_{\Delta})=0$$
where $\mathcal J_{\Delta}$ is 
the multiplier ideal of the pair $(X, \Delta)$. 
\end{thm}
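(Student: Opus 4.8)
The plan is to reduce the Nadel-type statement to the Kawamata–Viehweg-type vanishing (Theorem \ref{0kvvtype}) via a log resolution, following the standard characteristic-zero template but being careful that every vanishing we invoke is of the ``$+mN$'' flavour permitted in positive characteristic.

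First I would take a log resolution $f\colon Y\to X$ of the pair $(X,\Delta)$, so that $Y$ is smooth, $\Exc(f)$ is simple normal crossing, and writing $K_Y=f^*(K_X+\Delta)+\sum_i a_i E_i$ the divisor $\sum_i E_i+\Supp(f^{-1}_*\Delta)$ has simple normal crossing support. By the very definition of the multiplier ideal one has
\begin{equation*}
\mathcal J_{\Delta}=f_*\mathcal O_Y(\ulcorner K_Y-f^*(K_X+\Delta)\urcorner),
\end{equation*}
and, writing $F:=\ulcorner K_Y-f^*(K_X+\Delta)\urcorner$, the projection formula gives $f_*\mathcal O_Y(f^*(L+mN)+F)=\mathcal O_X(L+mN)\otimes\mathcal J_{\Delta}$. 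The strategy is then to prove the vanishing of $H^i(Y,f^*(L+mN)+F)$ upstairs for $i>0$ and to push it down.

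For the pushdown I would use the Leray spectral sequence. The local vanishing $R^jf_*\mathcal O_Y(F)=0$ for $j>0$ is exactly the surface analogue of the usual relative vanishing: since $-F+f^*(K_X+\Delta)=K_Y-(\text{effective }f\text{-exceptional with snc support})$ has the appropriate positivity along the fibres, and fibres of a birational morphism of surfaces are one-dimensional, this follows from relative Kodaira-type vanishing for the resolution, which holds unconditionally on curves. Combined with the projection formula this collapses the spectral sequence and identifies $H^i(X,\mathcal O_X(L+mN)\otimes\mathcal J_{\Delta})$ with $H^i(Y,f^*(L+mN)+F)$.

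It remains to kill the latter cohomology on the smooth surface $Y$, and here is where Theorem \ref{0kvvtype} enters and where the main obstacle lies. I would like to rewrite $f^*(L+mN)+F$ in the form $K_Y+\ulcorner A'\urcorner+mf^*N$, where $A'=f^*(L-(K_X+\Delta))-(\text{a small exceptional correction})$ should be an ample \R-divisor with simple-normal-crossing fractional part and $f^*N$ is nef. The delicate point is that $L-(K_X+\Delta)$ is only assumed \emph{nef and big}, not ample, and its pullback $f^*(L-(K_X+\Delta))$ is merely nef and big on $Y$; to feed it into the ample hypothesis of Theorem \ref{0kvvtype} I expect to invoke Kodaira's lemma to write it as ample plus a small effective \R-divisor, then absorb that effective part (together with the exceptional discrepancy terms) into the boundary so that the round-up and the snc condition are preserved. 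Making this perturbation compatible with the integrality of $F$ and with the snc hypothesis on the fractional part — so that the hypotheses of Theorem \ref{0kvvtype} are met exactly — is the technical heart of the argument; once $A'$ is arranged, the weak Kawamata–Viehweg vanishing applied with the nef, non-numerically-trivial divisor $f^*N$ gives $H^i(Y,K_Y+\ulcorner A'\urcorner+mf^*N)=0$ for $i>0$ and $m\gg0$, completing the proof.
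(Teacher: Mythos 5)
Your proposal is essentially the paper's own proof: take a log resolution, identify $\mathcal O_X(L+mN)\otimes\mathcal J_{\Delta}$ with the pushforward of $K_{X'}+\ulcorner\mu^*(L-(K_X+\Delta))\urcorner+m\mu^*N$ via the projection formula, collapse the Leray spectral sequence using relative Kawamata--Viehweg vanishing for the birational morphism (Corollary~\ref{relativekvv}), and conclude by the weak Kawamata--Viehweg vanishing with the $+mN$ twist on the smooth model. The ``technical heart'' you defer --- perturbing the merely nef and big divisor $f^*(L-(K_X+\Delta))$ to an ample one via Kodaira's lemma while preserving round-ups and the snc condition --- is not needed at this point in the paper, because its internal Theorem~\ref{kvvtype} is already stated for nef and big $B$ with snc fractional part (exactly that reduction, including the blowup descent with case analysis on $\mult_P\Delta_X$, is carried out once and for all in Step~2 of the proof of Theorem~\ref{kvvtype}), so the paper simply cites it with $B=\mu^*(L-(K_X+\Delta))$.
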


Using Theorem~\ref{0nadeltype}, 
we obtain the following basepoint free theorem 
(cf. \cite[Theorem~3.3]{KM}). 

\begin{thm}[Basepoint free theorem]
\label{0basepointfree}
Let $X$ be a projective normal surface 
over an algebraically closed field 
of positive characteristic. 
Let $\Delta$ be a \Q-divisor 
such that $\llcorner \Delta\lrcorner=0$ and 
$K_X+\Delta$ is \Q-Cartier. 
Let $D$ be a nef Cartier divisor which 
is not numerically trivial. 
Assume $aD-(K_X+\Delta)$ is nef and big 
for some $a\in \mathbb{Z}_{>0}$. 
Then there exists a positive integer $b_0$ 
such that, if $b\geq b_0$, 
then $|bD|$ is basepoint free. 
\end{thm}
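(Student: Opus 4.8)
The plan is to prove the statement by the X-method: for each closed point $x\in X$ I will produce, for all sufficiently large $b$, a section of $\mathcal{O}_X(bD)$ not vanishing at $x$, and then upgrade this pointwise freeness to genuine basepoint freeness of $|bD|$ for all $b\gg 0$. The decisive observation is that the given divisor $D$ is itself nef and not numerically trivial, so it can play the role of the divisor $N$ in the weak Nadel vanishing theorem (Theorem~\ref{0nadeltype}); this is exactly what makes a characteristic-free version of the X-method possible, since the vanishing we need will hold precisely for the twists $bD$ with $b\gg 0$.

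Fix a closed point $x$. Since $(X,\Delta)$ is klt (as $\llcorner\Delta\lrcorner=0$), we have $\mathcal{J}_{\Delta}=\mathcal{O}_X$ and the non-klt locus is empty. Using that $H:=aD-(K_X+\Delta)$ is nef and big, I would first construct an effective $\mathbb{Q}$-divisor $D'\sim_{\mathbb{Q}}\mu H$ with $0<\mu<1$ such that $(X,\Delta+D')$ is log canonical but not klt, with $x$ an isolated point of $\Nklt(X,\Delta+D')$: bigness of $H$ yields sections of $\mathcal{O}_X(kH)$ of arbitrarily large multiplicity at $x$ for $k\gg0$, and scaling by the appropriate log canonical threshold produces a non-klt center through $x$. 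A standard tie-breaking perturbation (available here because $\dim X=2$, so a log resolution exists) then isolates $x$ as the unique non-klt center and makes $\mathcal{O}_X/\mathcal{J}_{\Delta+D'}$ a nonzero sheaf of finite length supported at $x$. Setting $L:=b_0D$ with $b_0\geq a$, one computes that $L-(K_X+\Delta+D')\equiv(1-\mu)\bigl(\tfrac{b_0-\mu a}{1-\mu}D-(K_X+\Delta)\bigr)$ is nef and big, because $\tfrac{b_0-\mu a}{1-\mu}\geq a$ exactly when $b_0\geq a$, and adding the nef class $D$ to the nef and big class $H$ preserves nefness and bigness. Applying Theorem~\ref{0nadeltype} with this $L$, with boundary $\Delta+D'$, and with $N=D$, I obtain $H^1\bigl(X,\mathcal{O}_X(b_0D+mD)\otimes\mathcal{J}_{\Delta+D'}\bigr)=0$ for all $m\gg 0$. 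The long exact sequence attached to $0\to\mathcal{J}_{\Delta+D'}\to\mathcal{O}_X\to\mathcal{O}_X/\mathcal{J}_{\Delta+D'}\to 0$, twisted by $\mathcal{O}_X(bD)$ with $b=b_0+m$, then shows that $H^0(X,\mathcal{O}_X(bD))$ surjects onto the nonzero finite-length space supported at $x$; hence $x\notin\Bs|bD|$ for every $b\geq M(x)$, for a suitable $M(x)$.

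For the uniformity in $x$ I would first deduce that the stable base locus $\bigcap_b\Bs|bD|$ is empty, so that by Noetherian induction $\Bs|n_0D|=\emptyset$ for some $n_0$, i.e. $|n_0D|$ is basepoint free. To pass from one free multiple to all large $b$, I would pass to the morphism $f\colon X\to Z$ defined by $|n_0D|$, along whose fibers $D$ is numerically trivial; since $D$ is not numerically trivial, $\dim Z\geq 1$. When $\dim Z=1$, freeness of $|bD|$ reduces to a statement on the curve $Z$, where a divisor of large degree is basepoint free, giving freeness of $|bD|$ for all $b\gg 0$; when $\dim Z=2$ the divisor $D$ is nef and big and one argues directly. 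Equivalently, once two free multiples with coprime indices are produced, the semigroup $\{b:|bD|\text{ is free}\}$ contains all sufficiently large integers.

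The main obstacle is the construction of the auxiliary divisor $D'$ with an isolated, reduced, zero-dimensional non-klt center at $x$ and the control of the associated multiplier ideal $\mathcal{J}_{\Delta+D'}$ in positive characteristic. The vanishing itself is already supplied by Theorem~\ref{0nadeltype}; the delicate work is the tie-breaking and the verification that $\mathcal{O}_X/\mathcal{J}_{\Delta+D'}$ is supported exactly at $x$, which must be carried out through the resolution-based definition of the multiplier ideal---legitimate here because resolution of singularities holds for surfaces---rather than through any vanishing that fails in characteristic $p$.
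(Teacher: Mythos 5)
There is a genuine gap, and it is the central one: your opening claim that ``$(X,\Delta)$ is klt (as $\llcorner\Delta\lrcorner=0$)'' is false. The theorem only assumes $X$ normal projective with $K_X+\Delta$ $\mathbb{Q}$-Cartier; the condition $\llcorner\Delta\lrcorner=0$ controls the coefficients of $\Delta$ but says nothing about the singularities of $X$ itself, which may be strictly log canonical or worse (e.g.\ $X$ a cone over an elliptic curve with $\Delta=0$). In that case $\mathcal{J}_{\Delta}\subsetneq\mathcal{O}_X$ and $\Nklt(X,\Delta)\neq\emptyset$, and this non-klt case is precisely the substantive part of the paper's proof: the paper first applies the weak Nadel vanishing (Theorem~\ref{nadeltype}) with $L:=aD$, $N:=D$ to the sequence $0\to\mathcal{J}_{\Delta}(bD)\to\mathcal{O}_X(bD)\to\mathcal{O}_{M_{\Delta}}(bD)\to 0$ to show that for $b\gg 0$ the base locus $\Bs|bD|$ avoids the (zero-dimensional) non-klt locus, and then runs the Koll\'ar--Mori tie-breaking on the fixed components of $f^*|mD|$, $m=q^s$, where the round-up over the discrepancies $a_j\leq -1$ produces an extra effective exceptional divisor $G$ lying over non-klt points; the key geometric point is $\Supp G\cap\Supp F=\emptyset$ (because $\Bs|mD|$ misses the non-klt points), which is what allows the non-vanishing on the curve $F$ to go through. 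Your proposal, as written, proves at best the klt case, and even there it diverges from the paper: the paper handles the klt case by the classical X-method of \cite[Theorem~3.3]{KM} (Shokurov-type non-vanishing plus induction on fixed components, with Theorem~\ref{kvvtype} supplying vanishing for $b\gg 0$), not by isolating points.

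Within the klt case, your Angehrn--Siu-style point-isolation route is plausible in outline but has two unrepaired steps. First, with $D'\sim_{\mathbb{Q}}\mu H$, $\mu<1$, and $H:=aD-(K_X+\Delta)$ \emph{fixed}, the Riemann--Roch count only yields $\mult_x D'\approx\mu\sqrt{H^2}$, so you cannot force a non-klt point at $x$ unless $H^2>4$; you must instead cut with $H_b:=bD-(K_X+\Delta)$ for $b\gg 0$ (note $H_b^2\to\infty$ since $D\cdot H>0$ by Hodge index and $D\not\equiv 0$), which your nef-and-big computation still accommodates. Second, producing and tie-breaking an isolated non-klt center at a \emph{singular} point $x$ of a klt surface is exactly the delicate step you flag but do not carry out, and the multiplicity criterion $\mult_x D'\geq 2$ is a smooth-point statement; the paper sidesteps all of this by never isolating points. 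Finally, your fibration detour for uniformity is shaky: when $\dim Z=1$ the reduction of $|bD|$ to $Z$ presupposes that $bD$ descends along $f$, which is unjustified for $b$ not divisible by $n_0$. Your alternative remark is the right mechanism: run the Noetherian stabilization along powers of two distinct primes (as the paper does with $\Bs|q^sD|=\bigcap_l\Bs|q^lD|$) to get coprime free multiples, and use $\Bs|(b+b')D|\subseteq\Bs|bD|\cup\Bs|b'D|$ to conclude freeness for all $b\gg 0$.
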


Thus, if we can generalize the above 
vanishing theorems to the case of threefolds, 
then we can prove the above basepoint free theorem for threefolds. 
Unfortunately, however, 
there exists a counter-example 
to the above weak Kodaira vanishing theorem 
in the case where the dimension is three. 
We construct such counter-examples in Section~5. 

By the same argument as the proof of the 
above weak Kodaira vanishing theorem 
(Theorem~\ref{0kodairatype}), 
we can also establish the following vanishing theorem.

\begin{thm}\label{0fiberkvvtype}
Let $\pi:X\to S$ be a morphism 
over an algebraically closed field 
of positive characteristic 
from a smooth projective variety $X$ 
to a projective variety $S$. 
Let $A$ be a $\pi$-ample \R-divisor on $X$ 
whose fractional part is simple normal crossing. 
Set $f_{\max}:=\max_{s\in S} \dim \pi^{-1}(s)$. 
If $i\geq f_{\max}$, then 
$$R^i\pi_*\mathcal O_X(K_X+\ulcorner A\urcorner)=0.$$
\end{thm}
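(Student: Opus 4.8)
The plan is to separate the two ranges $i>f_{\max}$ and $i=f_{\max}$, and to convert the relative vanishing at the critical degree into an absolute cohomology statement on $X$ to which the proof of Theorem~\ref{0kodairatype} applies essentially verbatim. First I would dispose of the range $i>f_{\max}$: since every fibre of $\pi$ has dimension at most $f_{\max}$, relative Grothendieck vanishing gives $R^i\pi_*\mathcal G=0$ for every coherent sheaf $\mathcal G$ and every $i>f_{\max}$, so it remains only to treat $i=f_{\max}$. Replacing $S$ by the image $\pi(X)$ I may assume $\pi$ is surjective; if $\dim S=0$ then $\pi$-ample means ample and $f_{\max}=\dim X$, and the required $H^{\dim X}(X,K_X+\ulcorner A\urcorner)=0$ is immediate from Serre duality, because a section of $\mathcal O_X(-\ulcorner A\urcorner)$ would make $-\ulcorner A\urcorner$ effective, contradicting that it meets an ample curve negatively. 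So I may assume $\dim S\ge 1$ and fix an ample Cartier divisor $H$ on $S$; then $N:=\pi^*H$ is nef and, as $\pi$ is surjective onto a positive-dimensional base, numerically nontrivial.

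Next I would reduce to cohomology on $X$. Writing $\mathcal F:=\mathcal O_X(K_X+\ulcorner A\urcorner)$, the coherent sheaf $R^{f_{\max}}\pi_*\mathcal F$ vanishes if and only if $H^0\big(S,R^{f_{\max}}\pi_*\mathcal F\otimes\mathcal O_S(mH)\big)=0$ for all $m\gg0$. For $m\gg0$ Serre vanishing kills every term $H^p\big(S,R^q\pi_*\mathcal F\otimes\mathcal O_S(mH)\big)$ with $p>0$, so the Leray spectral sequence degenerates and identifies this group with $H^{f_{\max}}\big(X,\mathcal F\otimes\pi^*\mathcal O_S(mH)\big)$. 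Since $\pi^*H$ is Cartier it commutes with rounding, giving
$$H^{f_{\max}}\big(X,\mathcal F\otimes\pi^*\mathcal O_S(mH)\big)=H^{f_{\max}}\big(X,K_X+\ulcorner A+mN\urcorner\big).$$
For $m\gg0$ the divisor $A+mN$ is ample (a $\pi$-ample divisor plus a large pull-back of an ample divisor), and its fractional part, being that of $A$, is simple normal crossing. Thus the theorem is equivalent to the absolute statement that $H^{f_{\max}}(X,K_X+\ulcorner A'\urcorner)=0$ where $A':=A+mN$ is ample with simple normal crossing fractional part and $N$ is nef and numerically nontrivial.

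Finally I would run the argument of Theorem~\ref{0kodairatype} in this arbitrary-dimensional setting, the nef class $N=\pi^*H$ playing the role of the nef divisor there. The essential point, and the reason the vanishing holds precisely in degrees $\ge f_{\max}$ rather than in all positive degrees (where char-$p$ Kodaira genuinely fails), is that $N$ is pulled back from $S$: its positivity only ``sees'' the $\dim S$ base directions, so the cohomology it can force to vanish is exactly that lying above the fibre dimension. Conceptually this is visible fibrewise: the top relative cohomology is compatible with base change, so $R^{f_{\max}}\pi_*\mathcal F\otimes k(s)\cong H^{f_{\max}}(X_s,\mathcal F|_{X_s})$, and on a fibre $X_s$ of dimension $f_{\max}$ Serre duality rewrites this as the dual of $H^0\big(X_s,\omega_{X_s}\otimes\mathcal F^{-1}|_{X_s}\big)$, which should vanish because, via $\omega_{X_s}\cong K_X|_{X_s}$, that line bundle is $\mathcal O_{X_s}(-\ulcorner A\urcorner|_{X_s})$ and hence anti-ample along $X_s$.

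I expect the main obstacle to be exactly this fibrewise step: the fibres of $\pi$ may be singular or non-reduced, so controlling $\omega_{X_s}$ and the restriction of $K_X$ by adjunction, and justifying top-degree base change without flatness, is the delicate part. The Leray--Serre reduction of the second paragraph is precisely the device that lets one bypass these issues and instead import the positive-characteristic cohomological estimate of Theorem~\ref{0kodairatype} directly, so that the only genuinely new verification is that the proof of that theorem is insensitive to $\dim X$ once the positivity is split as ``ample plus $m$ times a numerically nontrivial nef class.''
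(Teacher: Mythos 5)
Your reduction steps are correct and in fact coincide with the paper's route: $R^i\pi_*\mathcal G=0$ for $i>f_{\max}$ by the relative dimension bound, and the Leray--Serre degeneration converting the remaining case $i=f_{\max}$ into the absolute statement $H^{f_{\max}}(X, K_X+\ulcorner A\urcorner+mN)=0$ for $m\gg 0$, $N=\pi^*H$. This absolute statement is exactly part (1) of Theorem~\ref{fiberrelativekvv}, from which the paper deduces the relative vanishing by the same spectral-sequence argument you give. The genuine gap is in your final step, which is where the mathematical content lies: you assert that the proof of Theorem~\ref{0kodairatype} goes through in dimension $n$ ``once the positivity is split as ample plus $m$ times a numerically nontrivial nef class.'' That sufficient condition is false, and Section~5 of the paper exhibits counterexamples: in Examples~\ref{cexnu=1} and \ref{cexnu=2} there is a smooth projective threefold with $A$ ample and $N$ semi-ample (hence nef) and numerically nontrivial, yet $H^1(X, K_X+A+mN)\neq 0$ for all $m\gg 0$. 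In Example~\ref{cexnu=1} the divisor $N$ is even a pullback from a curve, with $f_{\max}=2$, so the nonvanishing at $i=1<f_{\max}$ shows both that the threshold really is $f_{\max}$ and that numerical nontriviality alone buys nothing below the top degrees.

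Concretely, if you rerun the Frobenius argument of Proposition~\ref{h1proto}, the middle term $H^i(X, F^e_*\omega_X(A+mN))=H^i(X,\omega_X(p^eA+p^emN))$ dies by Fujita vanishing for \emph{all} $i>0$, uniformly in the nef twist (choose $e$ first, then $m$); the degree restriction $i\geq f_{\max}$ enters solely through the term $H^{i+1}(X,\mathcal B_e(A+mN))$. For surfaces this was Proposition~\ref{h2vanishing}, a top-degree Serre-duality computation valid for any numerically nontrivial nef $N$; in degree $f_{\max}+1<n$ no duality argument is available, and the needed input --- which the paper isolates as Proposition~\ref{fiberserrevanish} --- is that for $N=\pi^*A_S$ and an \emph{arbitrary} coherent sheaf $F$ one has $H^j(X,F(mN))=0$ for $j\geq f_{\max}+1$ and $m\gg 0$; it is proved by the same Leray degeneration you already used, applied to $F$, together with $R^j\pi_*F=0$ for $j>f_{\max}$, and it must then be applied to the kernel sheaf $\mathcal B_e$. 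Your third paragraph carries the right intuition (``positivity only sees base directions''), but you never formulate or apply this coherent-sheaf lemma, and your fibrewise base-change/Serre-duality discussion, which you yourself flag as delicate, does not substitute for it. A secondary omission: Theorem~\ref{0kodairatype} concerns Cartier $A$, so before the Frobenius step you must remove the round-up, i.e., perturb $A$ so that the denominators of its fractional part are prime to $p$ and pass to the cyclic cover of Proposition~\ref{cycliccover}, exactly as the paper's proof of Theorem~\ref{fiberrelativekvv} does.
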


\begin{nasi}[Overview of contents]
In Section~1, we summarize the notations. 
In Section~2, we prove Theorem~\ref{0kodairatype}, 
Theorem~\ref{0kvvtype} and Theorem~\ref{0nadeltype} by using the Frobenius maps 
and the Fujita vanishing theorem. 
In Section~3, we apply these vanishing results 
to the minimal model theory. 
In Section~4, we show 
Theorem~\ref{0fiberkvvtype} and 
other vanishing theorems. 
In Section~5, we construct counter-examples 
to the above vanishing results in the case 
where the dimension is three. 
\end{nasi}

\begin{nasi}[Overview of related literature]
We summarize the literature related to this paper 
with respect to the vanishing theorems and the basepoint free theorem. 

(Vanishing theorem) 
Let us summarize some known results on the Kodaira vanishing theorem and its generalizations. 

In characteristic zero, 
Kodaira establishes the Kodaira vanishing theorem. 
\cite{Kawamata1} and \cite{Viehweg} generalize this result. 
For detailed treatments, 
see \cite[Chapter 1]{KMM}, \cite[Section~2.4, 2.5]{KM} and \cite[Part Three]{Lazarsfeld}. 

In positive characteristic, \cite{Raynaud} shows that 
there exists a counter-example to the Kodaira vanishing theorem. 
\cite{Ekedahl} and \cite{Mukai} deeply investigate 
the counter-examples to the Kodaira vanishing theorem. 
On the other hand, 
there are some positive results on the Kodaira vanishing theorem in positive characteristic. 
For example, 
\cite{Xie} shows that the Kawamata--Viehweg vanishing theorem holds
for rational surfaces. 
In \cite{KK}, Koll\'ar and Kov\'acs prove the relative Kawamata--Viehweg vanishing theorem 
for birational morphisms between surfaces. 
The proof is a calculation of the cohomology for curves. 
We also establish this result in this paper. (See Corollary~\ref{relativekvv}.) 
Our proof depends on the Frobenius maps.

(Basepoint free theorem) 
In characteristic zero, many people contributed to 
the basepoint free theorem 
(cf. \cite{Benveniste} \cite{Kawamata2} \cite{Kawamata3} \cite{Kawamata4} 
\cite{Reid} \cite{Shokurov}). 

In positive characteristic, 
\cite{Keel} shows the basepoint free theorem 
for \Q-factorial threefolds with non-negative Kodaira dimension, 
defined over the algebraic closure of a finite field.
In this paper, we show the basepoint free theorem for klt surfaces. 
To prove this, we establish a weak version of the Kodaira vanishing theorem (Theorem~\ref{0kodairatype}). 

Here, let us compare Theorem~\ref{0basepointfree} 
with the following basepoint free theorem obtained in \cite{Tanaka}. 

\begin{thm}[Theorem~0.3 of \cite{Tanaka}]
\label{0tanakabasepointfree}
Let $X$ be a projective normal 
\Q-factorial surface over 
an algebraically closed field 
of positive characteristic. 
Let $\Delta$ be a \Q-divisor 
such that $\llcorner \Delta\lrcorner=0$. 
Let $D$ be a nef Cartier divisor. 
Assume $aD-(K_X+\Delta)$ is nef and big 
for some $a\in \mathbb{Z}_{>0}$. 
Then $D$ is semi-ample.
\end{thm}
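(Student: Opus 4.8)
The plan is to deduce semi-ampleness of $D$ from the Basepoint free theorem (Theorem~\ref{0basepointfree}) by a dichotomy on the numerical class of $D$; since $X$ is \Q-factorial, $K_X+\Delta$ is automatically \Q-Cartier, so the hypotheses of the earlier results are in place. First I would treat the case where $D$ is not numerically trivial. Then $X$, $\Delta$ and $D$ satisfy exactly the hypotheses of Theorem~\ref{0basepointfree}: a projective normal surface with $\llcorner\Delta\lrcorner=0$, a nef Cartier divisor $D$ that is not numerically trivial, and $aD-(K_X+\Delta)$ nef and big. Hence $|bD|$ is basepoint free for all $b\ge b_0$; in particular $b_0 D$ is globally generated, which is precisely the assertion that $D$ is semi-ample. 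So this case is immediate once Theorem~\ref{0basepointfree} is granted, its genuine content lying in the X-method proof of that theorem via the weak Nadel vanishing (Theorem~\ref{0nadeltype}).

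The remaining case is $D\equiv 0$, where semi-ampleness amounts to $D$ being torsion, and here I would argue that $X$ is rational. From $D\equiv 0$ and the hypothesis we get that $-(K_X+\Delta)\equiv aD-(K_X+\Delta)$ is nef and big, and since $\Delta\ge 0$ this forces $-K_X$ to be big; thus $X$ is a klt weak log del Pezzo surface of Kodaira dimension $-\infty$. Passing to the minimal resolution and applying the surface classification in positive characteristic, a ruling over a base of positive genus is incompatible with bigness of the anticanonical class, so the base is $\mathbb{P}^1$ and $X$ is rational. Rationality gives $H^1(X,\mathcal O_X)=0$, hence $\Pic^0_X=0$; since a numerically trivial class lies in $\Pic^\tau$ and the quotient $\Pic^\tau/\Pic^0$ is finite, some multiple $mD$ lies in $\Pic^0_X(k)=0$, so $mD\sim 0$ and $D$ is torsion, hence semi-ample.

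I expect the numerically trivial case to be the main obstacle, and specifically the step asserting $H^1(X,\mathcal O_X)=0$ in positive characteristic. This rests on klt surface singularities being rational and on the char-$p$ surface classification behaving as over $\mathbb{C}$ for anticanonically big surfaces; the weak vanishing theorems established here do not help directly, since Theorem~\ref{0nadeltype} requires an auxiliary nef divisor $N$ that is not numerically trivial and hence cannot be applied with the zero class to compute $H^1(\mathcal O_X)$. By contrast, the non-numerically-trivial case is essentially formal once Theorem~\ref{0basepointfree} is available.
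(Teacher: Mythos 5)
This statement is not proved in the paper at all: it is quoted verbatim as Theorem~0.3 of \cite{Tanaka}, and the introduction explicitly says its proof there goes through the minimal model theory for \Q-factorial surfaces and Keel's theorem \cite[Theorem~0.2]{Keel}, a route ``essentially different'' from the X-method of this paper. So your attempt is necessarily a new argument, and it must stand on its own. Your first case, $D\not\equiv 0$, is correct and unproblematic: $K_X+\Delta$ is \Q-Cartier by \Q-factoriality, Theorem~\ref{0basepointfree} applies verbatim, and basepoint freeness of $|b_0D|$ gives semi-ampleness. But this case is exactly the overlap already discussed in the paper; the content of Theorem~\ref{0tanakabasepointfree} beyond Theorem~\ref{0basepointfree} is precisely the case $D\equiv 0$, and there your argument has a genuine gap.

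The gap is the claim that $X$ is rational when $-(K_X+\Delta)$ is nef and big. First, $(X,\Delta)$ is not assumed klt: the hypothesis $\llcorner\Delta\lrcorner=0$ constrains the coefficients of $\Delta$, not the singularities of $X$, so ``klt weak log del Pezzo'' is unjustified --- and the rationality conclusion is in fact false in this generality. Over $k=\overline{\mathbb{F}}_p$, let $X\subset\mathbb{P}^3$ be the projective cone over a smooth plane cubic $E$. Then $X$ is normal and projective, $-K_X=\mathcal O_X(1)$ is ample, and $X$ is \Q-factorial, because the relevant quotient of $\mathrm{Cl}(X)$ is controlled by $\Pic(E)/\mathbb{Z}[\mathcal O_E(1)]$ and $E(\overline{\mathbb{F}}_p)$ is a torsion group; yet $X$ is birational to $E\times\mathbb{P}^1$, hence irrational. (Its vertex is log canonical but not klt and not a rational singularity --- exactly why the paper's Theorem~\ref{weakdelpezzo} and Corollary~\ref{basepointfree0} carry the hypothesis ``singularities at worst rational,'' which you do not have.) Second, even granting rational singularities, your one-line classification step --- ``a ruling over a base of positive genus is incompatible with bigness of the anticanonical class'' --- is false: on $X'=\mathbb{P}(\mathcal O_Z\oplus L)$ over an elliptic curve $Z$ with $\deg L=-e<0$, one has $-K_{X'}\equiv 2C_0+eF$ and $h^0(X',-mK_{X'})$ grows like $em^2/2$, so $-K_{X'}$ is big (it is not nef, since $-K_{X'}\cdot C_0=-e<0$). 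Eliminating such examples is the entire point of Step~1 of the paper's proof of Theorem~\ref{weakdelpezzo}, which first contracts negative curves using the cone and contraction theorems of \cite{Tanaka} before any classification or Picard-number argument is used. Your endgame (rational $\Rightarrow H^1(X,\mathcal O_X)=0$ via the Leray injection into $H^1$ of a resolution $\Rightarrow$ $\Pic^0$ trivial and $\Pic^\tau/\Pic^0$ finite $\Rightarrow mD\sim 0$) is sound, but its input fails; in the non-klt \Q-factorial case the $D\equiv 0$ statement really does seem to require the MMP/Keel machinery of \cite{Tanaka}, not the vanishing theorems of this paper.
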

Theorem~\ref{0tanakabasepointfree} 
does not need the assumption that 
$D$ is not numerically trivial. 
On the other hand, 
Theorem~\ref{0basepointfree} 
does not need the \Q-factoriality and 
its claim is stronger than the semi-ampleness. 

The proof of Theorem~\ref{0basepointfree} and 
the one of Theorem~\ref{0tanakabasepointfree} 
are essentially different. 
The proof of Theorem~\ref{0basepointfree} 
depends on the above vanishing theorem (Theorem~\ref{0nadeltype}). 
On the other hand, 
the proof of Theorem~\ref{0tanakabasepointfree} uses 
the minimal model theory for \Q-factorial surfaces. 
In characteristic zero, \cite{Fujino} establishes the minimal model theory for \Q-factorial surfaces. 
In \cite{Tanaka}, the author establishes 
the minimal model theory for \Q-factorial surfaces in positive characteristic. 
The arguments in \cite{Tanaka} heavily depend on \cite[Theorem~0.2]{Keel}, 
which holds only in positive characteristic (cf. \cite[Section~3]{Keel}). 
Keel's proof depends on the Frobenius maps and the theory of the algebraic spaces. 
For alternative proofs of \cite[Theorem~0.2]{Keel}, 
see \cite{CMM} and \cite{FT}. 
\cite{FT} only considers the case of surfaces. 
\end{nasi}

\section{Notations}

We will freely use the notation and terminology of \cite{KM}. 

Our notation will not distinguish 
between invertible sheaves and Cartier divisors.
For example, 
we will write $L+M$ for 
invertible sheaves $L$ and $M$. 

For a coherent sheaf $F$ and a Cartier divisor $L$, 
we define $F(L):=F\otimes \mathcal O_X(L)$. 

Throughout this paper, 
we work over an algebraically closed field $k$ 
of positive characteristic and 
let ${\rm char}\,k=:p>0.$

In this paper, {\em a variety} means 
an integral scheme which is separated and of finite type over $k$. 
{\em A curve} or {\em a surface} means a variety 
whose dimension is one or two, respectively. 

Let $X$ be a projective normal variety and 
let $L$ be a nef \R-Cartier \R-divisor. 
We define {\em the numerical dimension} 
$\nu(X, L)\in\{0,1,\cdots,\dim X\}$ as follows. 
If $L$ is numerically trivial, then 
we set $\nu(X, L)=0$. 
If $L$ is not numerically trivial, then 
we define $\nu(X, L)$ by 
$$\nu(X, L):=\max\{e\in \mathbb Z_{\geq 1}
\,|\, L^{e}\,\,\,
{\rm is\,\,\,not\,\,\,numerically\,\,\,trivial}\}.$$
Note that $L$ is not numerically trivial if 
and only if $\nu(X, L)\geq 1.$

\section{Vanishing theorems for surfaces}

In this section, 
we establish some vanishing theorems for surfaces. 
Proposition~\ref{h1proto} is the key in this section. 
We prove Proposition~\ref{h1proto} 
by using Proposition~\ref{h2vanishing}, 
the Fujita vanishing theorem and the Frobenius maps.

Thus, let us recall the Fujita vanishing theorem 
which is a generalization of the 
Serre vanishing theorem.

\begin{fact}[Fujita vanishing theorem]
\label{Zfujitavanishing}
Let $X$ be a smooth projective variety. 
Let $F$ be a coherent sheaf and 
let $A$ be an ample \Z-divisor. 
Then there exists a positive integer 
$m(F, A)$ such that 
$$H^i(X, F(mA+N))=0$$
for every $i>0$, 
every integer $m\geq m(F, A)$ and 
every nef \Z-divisor $N$. 
\end{fact}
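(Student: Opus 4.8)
The plan is to recall that this is Fujita's theorem (Fujita, \emph{Semipositive line bundles}; see also \cite[Theorem~1.4.35]{Lazarsfeld}), so one could simply cite it; but let me sketch a self-contained argument by induction on $n=\dim X$. First I would reduce to the case where $A$ is very ample: choosing $\ell$ so that $\ell A$ is very ample and applying the theorem (for the very ample divisor $\ell A$) to each of the finitely many sheaves $F, F(A),\dots,F((\ell-1)A)$, I recover the statement for $A$ itself, since every sufficiently large integer has the form $m\ell+j$ with $0\le j<\ell$. Note that $N$ remains nef throughout, so the uniformity in $N$ is preserved.

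With $A$ very ample, I would pick a general member $D\in|A|$ avoiding the associated points of $F$, giving the short exact sequence
$$0\to F((m-1)A+N)\to F(mA+N)\to F_D((mA+N)|_D)\to 0,$$
where $F_D:=F\otimes\mathcal O_D$ has support of dimension $<n$. By the inductive hypothesis applied to $(D, A|_D, F_D)$ with the nef divisor $N|_D$, there is an $m_1$, \emph{independent of} $N$, with $H^i(D, F_D((mA+N)|_D))=0$ for $i>0$ and $m\ge m_1$. Feeding this into the long exact sequence, for $m\ge m_1$ the natural maps $H^i(F((m-1)A+N))\to H^i(F(mA+N))$ become isomorphisms for $i\ge 2$ (injectivity using $H^{i-1}(F_D)=0$, surjectivity using $H^i(F_D)=0$) and surjections for $i=1$.

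The vanishing for $i\ge 2$ then follows at once: the groups $H^i(F(mA+N))$ are independent of $m\ge m_1$, while for each \emph{fixed} nef $N$ the divisor $mA+N$ is ample (ample plus nef is ample), so Serre vanishing forces $H^i(F(mA+N))=0$ for $m\gg 0$. Being constant in $m$, these groups must already be zero for all $m\ge m_1$, with $m_1$ uniform in $N$. The main obstacle is the case $i=1$: there the hyperplane argument yields only surjectivity, hence that $h^1$ is non-increasing in $m$, which does not by itself force vanishing at $m=m_1$. Equivalently, I must show that the restriction map $H^0(F(mA+N))\to H^0(F_D((mA+N)|_D))$ is surjective for $m$ large, uniformly in $N$; this is exactly the vanishing of the connecting map, and hence the injectivity needed to promote the $i=1$ surjections to isomorphisms. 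I expect this uniform lifting of global sections off the hyperplane $D$ to be the technical heart of the proof, to be handled by the same inductive machinery. Once it is in place, the $i=1$ maps are isomorphisms as well, and the Serre-vanishing argument above closes the last case.
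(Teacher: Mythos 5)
The paper gives no argument for this Fact at all: its ``proof'' is the single line ``See \cite[Theorem~(1)]{Fujita1} or \cite[Section~5]{Fujita2}.'' So your opening move---cite Fujita (or \cite{Lazarsfeld})---is exactly what the paper does, and for the paper's purposes that alone suffices; the Fact is imported, not proved.

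Your supplementary sketch is worth a comment, though, because the gap you flag is more serious than your phrasing suggests. The reduction to $A$ very ample is correct and does preserve uniformity in $N$, and your $i\ge 2$ argument is sound: induction on dimension makes $H^i(X,F(mA+N))$ constant for $m\ge m_1$ with $m_1$ independent of $N$, and for each fixed $N$ Serre vanishing (since $mA+N$ is ample) kills the constant value. But the $i=1$ case is not a loose end ``to be handled by the same inductive machinery''---it is precisely the content that separates Fujita's theorem from Serre vanishing. Re-running your induction only reproduces the surjections $H^1(F((m-1)A+N))\twoheadrightarrow H^1(F(mA+N))$, i.e.\ that $h^1$ is non-increasing in $m$; the stabilization point, hence the moment it reaches zero, a priori depends on $N$. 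To close this one would need, for instance, a bound on $h^1(X,F(m_1A+N))$ uniform over all nef $N$, and nothing in your setup provides it, since the numerical classes $m_1A+N$ range over an unbounded set. The actual proofs (Fujita's original argument, and Keeler's extension to noetherian schemes) need a genuinely additional idea at exactly this point. So: as a justification of the Fact your citation matches the paper; as a self-contained proof the sketch is incomplete at the step you identified, and the missing step is the theorem's real difficulty rather than a technical formality.
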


\begin{proof}
See \cite[Theorem~(1)]{Fujita1} or 
\cite[Section~5]{Fujita2}. 
\end{proof}

Since we would like to work over 
\R-divisors, 
let us generalize the Fujita vanishing theorem 
to real coefficients.

\begin{thm}[Fujita vanishing theorem for \R-divisors]
\label{Rfujitavanishing}
Let $X$ be a smooth projective variety. 
Let $F$ be a coherent sheaf and 
let $A$ be an ample \R-divisor. 
Then there exists a positive real number 
$r(F, A)$ such that 
$$H^i(X, F(rA+N))=0$$
for every $i>0$, 
every real number $r\geq r(F, A)$ and 
every nef \R-divisor $N$ 
such that $rA+N$ is a \Z-divisor. 
\end{thm}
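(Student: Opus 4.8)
The plan is to deduce the real-coefficient statement from the integral Fujita vanishing theorem (Fact~\ref{Zfujitavanishing}) by absorbing the real coefficients into a single fixed ample integral divisor. First I would fix an ample Cartier divisor $A_0$ on $X$ (one exists since $X$ is projective). Because $A$ is an ample \R-divisor, the ample cone is open, so there is a real number $c_0>0$ with $A-c_0A_0$ nef; consequently $A-cA_0$ is nef for every $c\in[0,c_0]$, since $A-cA_0=(1-t)A+t(A-c_0A_0)$ with $t=c/c_0\in[0,1]$ is a nonnegative combination of the ample class $A$ and the nef class $A-c_0A_0$. I would then let $m_0:=m(F,A_0)$ be the positive integer supplied by Fact~\ref{Zfujitavanishing} for the pair $(F,A_0)$.

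The key manipulation is the following rewriting. Given $r$ and a nef \R-divisor $N$ with $rA+N$ a \Z-divisor, I want to choose an integer $m$ and set $N':=rA+N-mA_0$, so that $rA+N=mA_0+N'$, which is exactly the form to which the integral theorem applies. Integrality of $N'$ is automatic, since both $rA+N$ and $mA_0$ are \Z-divisors. Nefness follows by writing $N'=(rA-mA_0)+N$: the divisor $rA-mA_0=r\,(A-(m/r)A_0)$ is nef whenever $0\le m\le c_0 r$, by the convexity remark above, and $N$ is nef by hypothesis; a nef \R-divisor that happens to be a \Z-divisor is a nef \Z-divisor. Thus $N'$ is a nef \Z-divisor provided $m_0\le m\le c_0 r$.

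It remains to guarantee that such an integer $m$ exists in the admissible window $[m_0,c_0 r]$. I would therefore set $r(F,A):=(m_0+1)/c_0$, so that for every real $r\ge r(F,A)$ the interval $[m_0,c_0 r]$ has length $c_0r-m_0\ge 1$ and hence contains an integer $m$. For this $m$ one has $m\ge m_0=m(F,A_0)$ and $N'$ a nef \Z-divisor, so Fact~\ref{Zfujitavanishing} yields $H^i(X,F(mA_0+N'))=0$ for all $i>0$; since $F(rA+N)=F(mA_0+N')$ this is the desired vanishing.

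I expect the only delicate point to be bookkeeping rather than anything deep: one must take care that the chosen $m$ depends on $r$ and $N$ while the threshold $r(F,A)$ does not, and that the integrality hypothesis on $rA+N$ is precisely what converts the a priori real divisor $N'$ into an honest \Z-divisor so that the integral theorem can be invoked. There is no genuine geometric obstacle here; the real-coefficient statement is a formal consequence of the integral one together with the openness of the ample cone.
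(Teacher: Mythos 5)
Your proof is correct, and at bottom it runs on the same engine as the paper's: both arguments are purely formal reductions to the integral Fujita vanishing (Fact~\ref{Zfujitavanishing}), rewriting $rA+N=mH+N'$ with $H$ a fixed ample \Z-divisor, $m\geq m(F,H)$ an integer, and $N'$ a nef \Z-divisor (integrality of $N'$ coming, exactly as you say, from the hypothesis that $rA+N$ is a \Z-divisor). The decomposition differs, though. The paper first reduces to rational coefficients via the perturbation $A=A'+A''$ with $A'$ an ample \Q-divisor, absorbing $rA''$ into the nef slot, and then clears denominators, taking $H:=m_1A'$ with $m_1A'$ integral; the remainder $(r-mm_1)A'+N$ is then nef for the trivial reason that $r-mm_1\geq 0$, so no input about the structure of the ample cone is needed. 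You instead keep $A$ as it is and compare it with an unrelated ample Cartier divisor $A_0$, invoking openness of the ample cone to produce $c_0>0$ with $A-c_0A_0$ nef, whence $rA-mA_0$ is nef whenever $m\leq c_0r$. What your route buys is a one-step argument that never perturbs $A$, together with explicit bookkeeping the paper leaves implicit: the uniform threshold $r(F,A)=(m_0+1)/c_0$ and the observation that the window $[m_0,c_0r]$ has length at least $1$ and so contains the needed integer $m$ depending on $r$ (but with the threshold independent of $r$ and $N$). What the paper's route buys is independence from the cone-openness lemma — it uses only that a nonnegative real multiple of an ample divisor is nef — at the cost of the preliminary rational-approximation step. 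Both are complete; your writeup is, if anything, the more careful of the two about where the thresholds come from.
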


\begin{proof}
First, we prove that 
we may assume that $A$ is a \Q-divisor. 
Consider the equation: 
$$A=\frac{1}{2}A+\frac{1}{2}A=A'+A''$$
where $A'$ and $A''$ are ample and 
$A'$ is a \Q-divisor. 
Note that we can find $A'$ and $A''$ by 
changing the coefficients of $(1/2)A$ a little. 
Thus we obtain the desired reduction 
by letting $rA+N=rA'+(N+rA'')$. 

Thus we may assume that $A$ is a \Q-divisor. 
Take a positive integer $m_1$ such that 
$m_1A$ is a \Z-divisor. 
Then we obtain the assertion 
by Fact~\ref{Zfujitavanishing} and the equation 
$rA+N=mm_1A+((r-mm_1)A+N).$
\end{proof}

Let us consider 
the following Serre--Fujita type vanishing theorem 
for surfaces.

\begin{prop}\label{h2vanishing}
Let $X$ be a smooth projective surface and 
let $F$ be a coherent sheaf on $X$. 
Let $N$ be a nef \R-divisor with $\nu(X, N)\geq 1$. 
Then there exists a positive real number $r(F, N)$ 
such that 
$$H^2(X, F(rN+N'))=0$$
for every positive real number $r\geq r(F, N)$ and 
for every nef \R-divisor $N'$ such that $rN+N'$ 
is a \Z-divisor.
\end{prop}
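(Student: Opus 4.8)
The plan is to pass to top cohomology via Serre duality and thereby reduce the statement to the nonexistence of global sections of a fixed vector bundle twisted down by $-(rN+N')$. Since $X$ is a smooth projective surface and $D:=rN+N'$ is an integral, hence Cartier, divisor, Serre duality gives $H^2(X,F(D))^{\vee}\cong \Hom(F(D),\omega_X)\cong \Hom(F,\omega_X(-D))$, where $\omega_X=\mathcal O_X(K_X)$. Because the target $\omega_X(-D)$ is a line bundle, every homomorphism annihilates the torsion subsheaf of $F$, so I would replace $F$ by its torsion-free quotient $\bar F:=F/F_{\mathrm{tors}}$ at no cost. If $\bar F=0$, then $F$ is torsion, $F(D)$ is supported in dimension at most one, and $H^2$ vanishes trivially; so I may assume $\operatorname{rank}\bar F=s\geq 1$. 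As $X$ is smooth, the dual $\bar F^{\vee}$ is reflexive and therefore locally free, so $G:=\bar F^{\vee}\otimes\omega_X$ is a vector bundle of rank $s$, and the computation recasts the problem as showing $H^0(X,G(-D))=0$ for $r\gg 0$.

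The heart of the argument is a degree estimate. Fix an ample divisor $H$. A nonzero section of $G(-D)$ is the same as a nonzero map $\mathcal O_X(D)\to G$, and since the source is a line bundle this map is injective, exhibiting $\mathcal O_X(D)$ as a rank-one subsheaf of $G$. The point I would exploit is that the $H$-degrees of rank-one subsheaves of the fixed bundle $G$ are bounded above by a constant $C=C(G,H)$. I would obtain this elementarily: for $t\gg 0$ the sheaf $G^{\vee}(tH)$ is globally generated, which dualizes to an embedding $G\hookrightarrow \mathcal O_X(tH)^{\oplus m}$; composing a rank-one subsheaf with a projection onto a factor where it is nonzero produces a nonzero map to $\mathcal O_X(tH)$, whence that subsheaf has $H$-degree at most $tH^2=:C$. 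Applying this to $\mathcal O_X(D)$ yields the inequality $(rN+N')\cdot H\leq C$.

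To conclude, I would use that $N$ is nef and not numerically trivial to force the left-hand side to grow with $r$. Since $N'$ is nef, $N'\cdot H\geq 0$, so $(rN+N')\cdot H\geq r\,(N\cdot H)$; and because $N$ is nef, nonzero in $\mathrm{Num}(X)$, and $H$ is ample, the Hodge index theorem gives $N\cdot H>0$ (if $N\cdot H=0$, then $N$ would lie in the negative-definite orthogonal complement of $H$, contradicting $N^2\geq 0$). Writing $\delta:=N\cdot H>0$, the bound $r\delta\leq C$ fails once $r>C/\delta$, so for such $r$ there is no nonzero section and $H^2(X,F(D))=0$; thus $r(F,N):=C/\delta$ works, depending only on $F$ and $N$ once $H$ is fixed. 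The step I expect to require the most care is the boundedness of the degrees of rank-one subsheaves of $G$ (equivalently $\mu_{\max}(G)<\infty$): one must ensure the constant $C$ is genuinely independent of $N'$ and $r$, which is exactly what the global-generation embedding guarantees. Everything invoked—Serre duality, local freeness of reflexive sheaves on a smooth surface, Serre's global-generation theorem, and the Hodge index theorem—holds in positive characteristic, so the argument is insensitive to $p$.
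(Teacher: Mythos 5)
Your proof is correct and follows essentially the same route as the paper's: both convert the top cohomology $H^2$ into a sections/Hom problem via Serre duality and then rule out sections because the degree $(rN+N')\cdot H\geq r\,(N\cdot H)$ against a fixed ample divisor grows linearly in $r$, with $N\cdot H>0$ coming from $\nu(X,N)\geq 1$ (the paper simply asserts $N\cdot A'>0$; your Hodge-index justification is the standard one). The only difference is cosmetic: the paper reduces to the line-bundle case first, via a surjection $\mathcal O_X(-A+rN+N')^{\oplus s}\to F(rN+N')$ and right-exactness of $H^2$ on a surface, whereas you dualize first and bound the $H$-degrees of rank-one subsheaves of the fixed locally free sheaf $\bar F^{\vee}\otimes\omega_X$ --- a dual maneuver with the same mathematical content.
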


\begin{proof}
Since $X$ is projective, we obtain the following exact sequence:
$$\mathcal O_X^{\oplus s}\to 
F\otimes \mathcal O_X(A) \to 0$$
where $A$ is a sufficiently ample \Z-divisor. 
Tensoring by $\mathcal O_X(-A+rN+N')$, we have 
$$\mathcal O_X(-A+rN+N')^{\oplus s}\to F(rN+N') \to 0.$$
Thus we may assume that $F=:L$ is an invertible sheaf. 
By Serre duality, we have 
$$h^2(X, L+rN+N')=h^0(X, K_X-L-rN-N').$$
Take an ample \Z-divisor $A'$. 
By $\nu(X, N)\geq 1$, we see $N\cdot A'>0.$ 
Then, 
for every sufficiently large number $r$, 
we obtain 
$$(K_X-L-rN-N')\cdot A'<0.$$
This implies $H^0(X, K_X-L-rN-N')=0.$
\end{proof}

Now, we prove the following weak Kodaira vanishing theorem, 
by using the above vanishing result for $H^2$.

\begin{prop}[weak Kodaira vanishing theorem]\label{h1proto}
Let $X$ be a smooth projective surface and 
let $A$ be an ample \R-divisor. 
Let $N$ be a nef \R-divisor with $\nu(X, N)\geq 1$. 
Then there exists a positive real number $r(A, N)$ 
such that 
$$H^1(X, K_X+A+rN+N'))=0$$
for every positive real number $r\geq r(A, N)$ and 
for every nef \R-divisor $N'$ such that 
$A+rN+N'$ is a \Z-divisor.
\end{prop}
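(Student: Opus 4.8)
The plan is to use the Frobenius trace to trade the ample divisor $A$ for the vastly more positive divisor $p^eA$, apply the Fujita vanishing theorem (Theorem~\ref{Rfujitavanishing}) to the resulting highly positive line bundle, and then absorb the error term using the $H^2$-vanishing of Proposition~\ref{h2vanishing}, where the hypothesis $\nu(X,N)\ge 1$ is exactly what is needed. Throughout, write $D:=A+rN+N'$, which is an integral Cartier divisor by hypothesis, so that the goal becomes $H^1(X,\mathcal O_X(K_X+D))=0$.

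First I would set up the trace map. For the $e$-th iterated (absolute) Frobenius $F^e\colon X\to X$, smoothness of $X$ makes $F^e$ finite and flat and yields a surjective Grothendieck-duality trace $F^e_*\mathcal O_X(K_X)\to \mathcal O_X(K_X)$; let $\mathcal K_e$ denote its kernel, a coherent sheaf depending only on $X$ and $e$. Since $p^eD$ is integral, $\mathcal O_X(p^eD)=F^{e*}\mathcal O_X(D)$, so tensoring the trace by $\mathcal O_X(D)$ and using the projection formula produces a short exact sequence
$$0\to \mathcal K_e(D)\to F^e_*\mathcal O_X(K_X+p^eD)\to \mathcal O_X(K_X+D)\to 0.$$
Because $F^e$ is affine, $H^i(X,F^e_*\mathcal G)=H^i(X,\mathcal G)$, so the middle term computes $H^i(X,\mathcal O_X(K_X+p^eD))$.

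Next I would kill the middle cohomology by Fujita vanishing. Writing $K_X+p^eD=K_X+p^eA+p^e(rN+N')$, with $p^eA$ a large multiple of the ample $A$ and $p^e(rN+N')$ nef, Theorem~\ref{Rfujitavanishing} applied to $F=\mathcal O_X(K_X)$ gives $H^i(X,\mathcal O_X(K_X+p^eD))=0$ for all $i>0$ once $p^e\ge r(\mathcal O_X(K_X),A)$; crucially this threshold depends only on $K_X$ and $A$, not on $r$ or $N'$. Fixing one such exponent $e$, the long exact sequence of the displayed sequence collapses to an isomorphism $H^1(X,\mathcal O_X(K_X+D))\cong H^2(X,\mathcal K_e(D))$.

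Finally I would dispose of the right-hand side by decomposing $D=rN+(A+N')$, where $A+N'$ is nef (ample plus nef) and $rN+(A+N')=D$ is integral; then Proposition~\ref{h2vanishing}, applied to the coherent sheaf $\mathcal K_e$ and the nef divisor $N$ with $\nu(X,N)\ge 1$, gives $H^2(X,\mathcal K_e(rN+(A+N')))=0$ once $r\ge r(\mathcal K_e,N)$. The main obstacle is precisely the bookkeeping of constants that makes this last step legitimate: the trace argument on its own only identifies $H^1$ with the $H^2$ of a kernel, so the real content is recognizing that kernel, after twisting, as a fixed coherent sheaf to which Proposition~\ref{h2vanishing} applies, and avoiding circularity between $e$ and $r$. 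This is resolved by fixing $e$ (hence $\mathcal K_e$) first via the Fujita threshold, and only afterwards setting $r(A,N):=r(\mathcal K_e,N)$; with this order, the isomorphism above yields the desired vanishing for every $r\ge r(A,N)$ and every admissible $N'$.
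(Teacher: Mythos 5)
Your proposal is correct and follows essentially the same route as the paper's proof: the iterated Frobenius trace sequence $0\to \mathcal B_e\to F^e_*\omega_X\to \omega_X\to 0$ twisted by $\mathcal O_X(A+rN+N')$, Fujita vanishing (Theorem~\ref{Rfujitavanishing}) to kill the cohomology of the middle term with the exponent $e$ chosen independently of $r$ and $N'$, and Proposition~\ref{h2vanishing} applied to the kernel sheaf to kill the relevant $H^2$. Your explicit ordering of the constants---fixing $e$ via the Fujita threshold for $(\omega_X, A)$ first, and only then setting $r(A,N):=r(\mathcal K_e,N)$---is exactly the point the paper makes in passing with the remark that $e$ can be taken independent of $r$ and $N'$.
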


\begin{proof}
Consider the following exact sequence 
$$0\to \mathcal B\to F_*\omega_X\to \omega_X \to 0$$
where $F:X\to X$ is the Frobenius map, that is 
the $p$-th power map, and 
$\mathcal B$ is the kernel of $F_*\omega_X\to \omega_X$. 
Considering the composition of the pushforwards 
by $F, F^2,\cdots,F^{e-1}$, 
we obtain 
$$0\to \mathcal B_e\to F^e_*\omega_X\to \omega_X \to 0$$
for some coherent sheaf $\mathcal B_e$. 

Tensoring by $\mathcal O_X(A+rN+N')$, we have 
$$0\to \mathcal B_e(A+rN+N')\to 
F^e_*\omega_X(A+rN+N')\to \omega_X(A+rN+N') \to 0.$$
We can find a large integer $e>0$ such that 
$$H^1(X, F^e_*\omega_X(A+rN+N'))=
H^1(X, \omega_X(p^eA+p^erN+p^eN'))=0.$$
Note that, by the Fujita vanishing theorem, 
we can take $e$ independent of $r$ and $N'$. 
By Proposition~\ref{h2vanishing}, we have 
$$H^2(X, \mathcal B_e(A+rN+N'))=0$$
for every large $r$. 
These imply
$$H^1(X, \omega_X(A+rN+N'))=0.$$
\end{proof}

In order to generalize the above weak Kodaira 
vanishing theorem 
to a vanishing theorem of Kawamata--Viehweg type, 
we recall the following covering lemma.

\begin{prop}\label{cycliccover}
Let $X$ be an $n$-dimensional smooth variety. 
Let $D$ be a \Q-divisor such that the support of 
the fractional part $\{ D\}$ is 
simple normal crossing. 
Moreover suppose that, 
for the prime decomposition 
$\{ D\}=\sum_{i\in I}
\frac{b^{(i)}}{a^{(i)}}D^{(i)}$, no integers $a^{(i)}$ are divisible by $p$. 
Then there exists a finite surjective morphism $\gamma:Y \to X$ 
from a smooth variety $Y$ with the following properties.
\begin{enumerate}
\item{The field extension $K(Y)/K(X)$ is a Galois extension.}
\item{$\gamma^*D$ is a \Z-divisor.}
\item{$\mathcal O_X(K_X+\ulcorner D\urcorner)\simeq 
(\gamma_*\mathcal O_Y(K_Y+\gamma^*D))^G$, where 
$G$ is the Galois group 
of $K(Y)/K(X)$.}
\item{If $D'$ is a \Q-divisor 
such that $\{D'\}=\{D\}$, then 
$\gamma^*D'$ is a \Z-divisor and 
$\mathcal O_X(K_X+\ulcorner D'\urcorner)\simeq 
(\gamma_*\mathcal O_Y(K_Y+\gamma^*D'))^G$.}
\end{enumerate}
\end{prop}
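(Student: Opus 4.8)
The plan is to construct $\gamma$ as the normalization of a fibre product of cyclic (Kummer) covers, one for each component $D^{(i)}$ of $\{D\}$, and then to deduce (1)--(4) from the tame ramification of these covers; the hypothesis that no $a^{(i)}$ is divisible by $p$ is what guarantees tameness, and hence both the smoothness of $Y$ and a clean ramification formula. Write $I=\{1,\dots,r\}$. First I would arrange divisibility in the Picard group: fix a sufficiently ample \Z-divisor $H$ with $a^{(i)}H-D^{(i)}$ very ample for all $i$, and choose general members $E^{(i)}\in|a^{(i)}H-D^{(i)}|$. By Bertini the $E^{(i)}$ are smooth and, after a general choice, $\sum_i(D^{(i)}+E^{(i)})$ is simple normal crossing. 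Since $D^{(i)}+E^{(i)}\sim a^{(i)}H$ is divisible by $a^{(i)}$ in $\Pic(X)$, there is a degree-$a^{(i)}$ cyclic cover $\gamma_i\colon Y_i\to X$ branched along $D^{(i)}+E^{(i)}$, namely the normalization of $\operatorname{Spec}_X\bigoplus_{j=0}^{a^{(i)}-1}\mathcal O_X(-jH)$ with the algebra structure given by a section cutting out $D^{(i)}+E^{(i)}$.

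Next I would let $Y$ be the normalization of $Y_1\times_X\cdots\times_X Y_r$, with induced morphism $\gamma\colon Y\to X$. Because the branch divisors are simple normal crossing and each $\gamma_i$ is tame (here $p\nmid a^{(i)}$ is essential), an \'etale-local computation---in suitable coordinates the cover is $(y_1,\dots,y_n)\mapsto(y_1^{a^{(1)}},\dots,y_n^{a^{(n)}})$ relative to the coordinate hyperplanes---shows that $Y$ is smooth and $\gamma^{-1}(\bigcup_i(D^{(i)}\cup E^{(i)}))$ is simple normal crossing. Each $\gamma_i$ is cyclic, hence abelian Galois, so the compositum of function fields is Galois with group $G\subseteq\prod_i\mathbb Z/a^{(i)}\mathbb Z$, an equality for a general choice of the $E^{(i)}$; this is (1). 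For (2), over the generic point of $D^{(i)}$ only the factor $\gamma_i$ ramifies, with index $a^{(i)}$, so $\gamma^*D^{(i)}=a^{(i)}\widetilde D^{(i)}$ with $\widetilde D^{(i)}=\gamma^{-1}(D^{(i)})_{\mathrm{red}}$. Hence $\gamma^*\{D\}=\sum_i b^{(i)}\widetilde D^{(i)}$ is a \Z-divisor, and since $\gamma^*\llcorner D\lrcorner$ is one too, so is $\gamma^*D$.

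For (3), tameness gives the ramification formula $K_Y=\gamma^*K_X+\sum_i(a^{(i)}-1)(\widetilde D^{(i)}+\widetilde E^{(i)})$. Combining it with $\gamma^*D=\gamma^*\llcorner D\lrcorner+\sum_i b^{(i)}\widetilde D^{(i)}$ gives $K_Y+\gamma^*D=\gamma^*(K_X+\llcorner D\lrcorner)+\sum_i(a^{(i)}-1+b^{(i)})\widetilde D^{(i)}+\sum_i(a^{(i)}-1)\widetilde E^{(i)}$, so by the projection formula it remains to extract $G$-invariants component by component. For a single tame cyclic factor of degree $a^{(i)}$ the $G$-invariants of $\mathcal O_Y(t\widetilde D^{(i)})$ are $\mathcal O_X(\llcorner t/a^{(i)}\lrcorner D^{(i)})$; with $t=a^{(i)}-1+b^{(i)}$ and $0<b^{(i)}<a^{(i)}$ this equals $\mathcal O_X(D^{(i)})$, which is the round-up $\mathcal O_X(\ulcorner(b^{(i)}/a^{(i)})D^{(i)}\urcorner)$, whereas the purely ramified terms give $\llcorner(a^{(i)}-1)/a^{(i)}\lrcorner=0$ and drop out. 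Collecting these and using $\ulcorner D\urcorner=\llcorner D\lrcorner+\ulcorner\{D\}\urcorner$ yields $(\gamma_*\mathcal O_Y(K_Y+\gamma^*D))^G\simeq\mathcal O_X(K_X+\ulcorner D\urcorner)$. This eigenspace bookkeeping is the technical heart of the argument.

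Finally, (4) is immediate, since the construction of $\gamma$ uses only the $D^{(i)}$ and the denominators $a^{(i)}$, that is, only $\{D\}=\{D'\}$; the same $\gamma$ serves for $D'$, and (2) and (3) for $D'$ follow verbatim from the identical computation, now extracting $\ulcorner D'\urcorner$. I expect the main obstacle to be making the two positive-characteristic inputs rigorous: that the normalized fibre product is smooth, and that its ramification divisor is exactly $\sum_i(a^{(i)}-1)(\widetilde D^{(i)}+\widetilde E^{(i)})$. Both can fail in the presence of wild ramification, and it is precisely the assumption $p\nmid a^{(i)}$ that excludes it; once tameness is in hand, the invariant computation is formally the same as in characteristic zero.
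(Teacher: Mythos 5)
Your overall strategy is the right one: the paper itself gives no argument but simply cites \cite[Theorem~1-1-1]{KMM}, whose proof is exactly the route you sketch (Kummer covers per component, normalized fibre product, tame local analysis, eigenspace bookkeeping for the invariants). However, there is a genuine gap in your smoothness step, and it is not where you located it. With a \emph{single} general member $E^{(i)}\in|a^{(i)}H-D^{(i)}|$, the branch divisor of the $i$-th cyclic cover is $D^{(i)}+E^{(i)}$, which is simple normal crossing but not smooth: along $D^{(i)}\cap E^{(i)}$ the cover is \'etale-locally $y^{a^{(i)}}=uv$ with $u,v$ independent coordinates, and this hypersurface is already normal and singular in codimension two (for $a^{(i)}=2$ it is the ordinary $A_1$-singularity $y^2=uv$). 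At a general point of $D^{(i)}\cap E^{(i)}$ all the other cyclic factors $\gamma_j$ are \'etale, so the normalized fibre product $Y$ is \'etale-locally the product of this singularity with an \'etale piece, hence singular. This failure has nothing to do with wild ramification---it occurs verbatim in characteristic zero---so the hypothesis $p\nmid a^{(i)}$ cannot repair it; your claimed local model $(y_1,\dots,y_n)\mapsto(y_1^{a^{(1)}},\dots,y_n^{a^{(n)}})$ arises only when each individual cyclic factor is branched, locally, along a \emph{single smooth} divisor, which fails precisely at the points where $D^{(i)}$ meets its auxiliary divisor $E^{(i)}$.

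The missing idea, which is how the cited proof proceeds, is to take for each $i$ not one but $n+1$ general members $E^{(i)}_1,\dots,E^{(i)}_{n+1}\in|a^{(i)}H-D^{(i)}|$, so that $\bigcap_k E^{(i)}_k=\emptyset$ and the union of all the $D^{(j)}$ and $E^{(i)}_k$ is simple normal crossing, and to form the normalized fibre product of the cyclic covers branched along $D^{(i)}+E^{(i)}_k$ for \emph{all} pairs $(i,k)$. Then at every point $x$ and for every $i$ there is some $k$ with $x\notin E^{(i)}_k$; that factor is locally branched along the smooth divisor $D^{(i)}$ alone (or is \'etale at $x$), say $z^{a^{(i)}}=u$, and it absorbs the bad local equations of the remaining factors: $y^{a^{(i)}}=uv$ pulls back to $y^{a^{(i)}}=z^{a^{(i)}}v$, whose normalization adjoins $w=y/z$ with $w^{a^{(i)}}=v$, which is smooth. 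Tameness enters exactly here (via Abhyankar's lemma and the \'etaleness of $z\mapsto z^{a^{(i)}}$ away from the origin) and again in the eigenspace decomposition of $\gamma_*\mathcal O_Y$, which needs $p\nmid |G|$; note also that Galoisness requires $\mu_{a^{(i)}}\subset k$, which holds since $k$ is algebraically closed and $p\nmid a^{(i)}$. With this correction the rest of your argument goes through: $\gamma^*D^{(i)}=a^{(i)}\widetilde D^{(i)}$, the tame ramification formula acquires the extra terms $(a^{(i)}-1)\widetilde E^{(i)}_k$, which drop out of the invariants since $\llcorner (a^{(i)}-1)/a^{(i)}\lrcorner=0$, and your floor computation $\llcorner (a^{(i)}-1+b^{(i)})/a^{(i)}\lrcorner=1$ (valid since all sheaves involved are reflexive, so the identification may be checked in codimension one) yields (3), with (4) following because the construction depends only on $\{D\}$.
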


\begin{proof}
See \cite[Theorem~1-1-1]{KMM}. 
\end{proof}

Now, we can generalize 
the above weak Kodaira vanishing (Proposition~\ref{h1proto}) 
to the following weak Kawamata--Viehweg vanishing.

\begin{thm}[weak Kawamata--Viehweg vanishing theorem]\label{kvvtype}
Let $X$ be a smooth projective surface. 
Let $B$ be a nef and big \R-divisor 
whose fractional part is simple normal crossing. 
Let $N$ be a nef \R-divisor with $\nu(X, N)\geq 1$. 
Then there exists a positive real number $r(B, N)$ 
such that 
$$H^i(X, K_X+\ulcorner B\urcorner+rN+N')=0$$
for every $i>0$, 
every positive real number $r\geq r(B, N)$ 
and every nef \R-divisor $N'$ 
such that $rN+N'$ is a \Z-divisor.
\end{thm}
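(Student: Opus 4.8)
The plan is to reduce the weak Kawamata–Viehweg vanishing (Theorem~\ref{kvvtype}) to the already-established weak Kodaira vanishing (Proposition~\ref{h1proto}) by means of the cyclic covering construction of Proposition~\ref{cycliccover}. The essential observation is that the hypothesis in Proposition~\ref{h1proto} requires $A$ to be \emph{ample}, whereas here $B$ is only \emph{nef and big}. So the first step is a standard Kodaira-type perturbation: since $B$ is nef and big, I would write $B \sim_{\mathbb{R}} A + E$ where $A$ is an ample \R-divisor and $E$ is an effective \R-divisor, and then absorb a small multiple of this decomposition so that, after slightly perturbing coefficients, I obtain an ample \R-divisor with a simple-normal-crossing fractional part having the same round-up. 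More precisely, I would replace $B$ by a numerically (or \R-linearly) equivalent divisor $B'$ with $\ulcorner B'\urcorner = \ulcorner B\urcorner$, whose fractional part is still simple normal crossing and whose support avoids creating integer coefficients (so that the round-up is unchanged), and such that $B'$ is ample rather than merely nef and big.

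\medskip
The heart of the argument is the descent step. After the reduction I may assume the relevant divisor is ample with simple-normal-crossing fractional part $\{B'\} = \sum_{i} \frac{b^{(i)}}{a^{(i)}} D^{(i)}$. Here I must ensure the denominators $a^{(i)}$ are prime to $p$, which I can arrange by perturbing the rational coefficients (the round-up $\ulcorner B'\urcorner$ depends only on the fractional structure and is stable under small perturbation away from integer values). Then Proposition~\ref{cycliccover} furnishes a finite Galois cover $\gamma : Y \to X$ from a smooth surface $Y$, with Galois group $G$ of order prime to $p$, such that $\gamma^*B'$ is a \Z-divisor and $\mathcal O_X(K_X + \ulcorner B'\urcorner) \simeq (\gamma_*\mathcal O_Y(K_Y + \gamma^*B'))^G$. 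I would apply the same isomorphism with $B'$ replaced by $B' + \frac{r}{|G|}\,\gamma^{-1}$-pullback data; concretely, I pull back the whole divisor $B' + rN + N'$ and use property~(4) of the covering lemma to identify the round-up on $X$ with the $G$-invariants of the corresponding sheaf on $Y$.

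\medskip
On $Y$, the pulled-back divisor $\gamma^*B'$ is a genuine ample \Z-divisor (ampleness is preserved by finite surjective morphisms), $\gamma^*N$ is nef with $\nu(Y,\gamma^*N)\geq 1$ since $\gamma$ is finite surjective and intersection numbers scale by $\deg\gamma$, and $\gamma^*N'$ is again nef. Thus Proposition~\ref{h1proto} applies on $Y$ and gives, for $r$ large and $\gamma^*B' + r\gamma^*N + \gamma^*N'$ a \Z-divisor,
$$H^i\bigl(Y, K_Y + \gamma^*B' + r\gamma^*N + \gamma^*N'\bigr) = 0 \quad (i>0).$$
To transfer this vanishing down to $X$, I use that $\gamma_*$ is exact (finite morphism) and that, because $|G|$ is prime to $p=\operatorname{char}k$, the functor of taking $G$-invariants is exact and splits off as a direct summand. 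Hence $H^i(X, K_X+\ulcorner B'\urcorner + rN + N')$ is a $G$-invariant direct summand of $H^i(Y,\dots)$, forcing it to vanish as well.

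\medskip
I expect the main obstacle to be the bookkeeping in the first (perturbation) step: I must produce an ample representative $B'$ with $\ulcorner B'\urcorner = \ulcorner B\urcorner$ whose fractional part is simple normal crossing \emph{and} has denominators prime to $p$, while simultaneously keeping control of the divisor $rN + N'$ so that the integrality hypothesis "$rN+N'$ is a \Z-divisor'' on $X$ matches up with the integrality of $\gamma^*(B' + rN + N')$ needed to invoke property~(4). Making these compatibility conditions precise — in particular checking that the extra ample perturbation $E$ can be chosen with denominators prime to $p$ and that the threshold $r(B,N)$ can be taken uniform in $N'$ by appealing to the Fujita-type uniformity already built into Proposition~\ref{h1proto} — is where the care is required; the descent via $G$-invariants is otherwise formal once $|G|$ is prime to $p$.
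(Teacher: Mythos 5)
Your treatment of the ample case reproduces the paper's Step~1 essentially verbatim: perturb to an ample $\mathbb{Q}$-divisor whose fractional-part denominators are prime to $p$, observe that $\{A+rN+N'\}=\{A\}$ because $rN+N'$ is a $\mathbb{Z}$-divisor (so Proposition~\ref{cycliccover}(4) applies and the cover $\gamma$ is independent of $r$ and $N'$), and conclude on the cover by Proposition~\ref{h1proto}, descending by exactness of $\gamma_*$ and of $G$-invariants. That part is sound.

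The genuine gap is your reduction from nef and big to ample. You propose to find, \emph{on $X$ itself}, an ample $B'$ with $\ulcorner B'\urcorner=\ulcorner B\urcorner$ and simple normal crossing fractional part, via Kodaira's lemma $B\sim_{\mathbb R}A+E$ plus a small perturbation of coefficients. But perturbation can only adjust coefficients, not supports: the fractional part of $B':=B-\epsilon E$ is supported on $\Supp\{B\}\cup\Supp E$, and Kodaira's lemma gives no control over $\Supp E$. Worse, whenever $B$ is nef and big but not ample there is a curve $C$ with $B\cdot C=0$, and then $A\cdot C>0$ forces $E\cdot C<0$, so $C\subset\Supp E$; such curves are rigid, may be singular, and may meet $\Supp\{B\}$ badly, so no $\mathbb R$-linearly equivalent choice of $E$ can make $\{B'\}$ snc in general. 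The snc hypothesis is not cosmetic --- Proposition~\ref{cycliccover} requires it --- so the covering trick cannot be run on $X$. The only remedy is to pass to a log resolution $f:Y\to X$ on which $f^*B-\epsilon E$ is ample with snc fractional part, and then you face a problem your proposal never addresses: transferring the vanishing from $Y$ back down to $H^1(X,K_X+\ulcorner B\urcorner+rN+N')$, since $\ulcorner f^*B\urcorner\neq f^*\ulcorner B\urcorner$ in general and higher direct images obstruct a naive Leray argument. The paper's Step~2 handles precisely this: it factors $f$ into point blow-ups $g:Z\to X$, sets $M:=\ulcorner B\urcorner+rN+N'$, and uses one of the two injections
$$H^1(X,K_X+M)\hookrightarrow H^1(Z,K_Z-C+g^*M),\qquad H^1(X,K_X+M)\hookrightarrow H^1(Z,K_Z+g^*M),$$
choosing according to whether $\mult_P(\ulcorner B\urcorner-B)\geq 1$ or $<1$, so that the boundary $\Delta_Z$ stays with $\llcorner\Delta_Z\lrcorner=0$ at every stage, and only at the top applies the ample case to $f^*B-\epsilon E$ (whose round-up agrees with $\ulcorner f^*B\urcorner$ for $0<\epsilon\ll 1$). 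Without this descent mechanism, or an equivalent substitute, your proof only establishes the theorem when $B$ is ample.
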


\begin{proof}
If $i=2$, then 
the assertion follows from 
Proposition~\ref{h2vanishing}. 
Thus we assume $i=1$. 

\begin{step}
In this step, 
we assume that $B=:A$ is ample and 
we prove the assertion. 

Since $A$ is ample, 
we may assume that 
$A$ is an ample \Q-divisor and 
that no denominators of the coefficients of 
its fractional part 
are divisible by $p$. 
Note that the fractional part of 
$A+rN+N'$ is equal to 
the fractional part of $A$ 
for an arbitrary real number $r$ and 
for a nef \R-divisor $N'$ such that 
$rN+N'$ is a \Z-divisor. 
Thus we can apply Proposition~\ref{cycliccover} 
for $D:=A+rN+N'$ and 
we obtain a finite cover $\gamma:Y\to X$ 
with the properties in the proposition. 
Note that the map $\gamma$ is 
independent of $r$ and $N'$. 
Therefore we have 
\begin{eqnarray*}
&&H^1(X, K_X+\ulcorner A\urcorner+rN+N')\\
&=&H^1(X, K_X+\ulcorner (A+rN+N')\urcorner)\\
&=&H^1(X, \gamma_*\mathcal O_Y(K_Y+\gamma^*(A+rN+N')))^G\\
&=&H^1(Y, K_Y+\gamma^*A+r\gamma^*N+\gamma^*N')^G\\
&=&0.
\end{eqnarray*}
The last equality follows 
from Proposition~\ref{h1proto} when $r\gg 0$.
\end{step}

\begin{step}
In this step, we prove the assertion. 

Let $f:Y\to X$ be a birational morphism 
from a smooth projective surface 
with the following properties: 
there exists an effective \Z-divisor $E$ 
such that $f^*B-\epsilon E$ 
is ample for $0<\epsilon\ll 1$
and the fractional part $\{f^*B-\epsilon E\}$ 
is simple normal crossing. 
Since $f$ has a decomposition into blow-ups of points, 
we consider the blow-up $g:Z\to X$ of one point $P$. 
Let $C$ be the exceptional curve. 
Set $\Delta_X:=\ulcorner B\urcorner-B$ and 
$M:=\Delta_X+B+rN+N'$. 
We will prove that 
$$H^1(X, K_X+\Delta_X+B+rN+N')=0.$$
Consider the exact sequences induced 
from the corresponding Leray spectral sequences: 
$$0\to H^1(X, K_X+M)\to H^1(Z, K_Z-C+g^*M)$$
$$0\to H^1(X, K_X+M)\to H^1(Z, K_Z+g^*M).$$
Note that the second exact sequence 
is obtained by Serre duality. 
If $\mult_P\Delta_X\geq 1$, then 
we set $\Delta_Z:=g^*(\Delta_X)-C$ and 
we can reduce the problem on $X$ to the problem on $Z$ 
by the first exact sequence. 
If $\mult_P\Delta_X< 1$, then 
we set $\Delta_Z:=g^*(\Delta_X)$ 
and we can also reduce 
the problem on $X$ to the problem on $Z$ by the second exact sequence. 
Thus it is sufficient to prove that 
$$H^1(Y, K_Y+\Delta_Y+f^*(B+rN+N'))=0.$$
Note that $\llcorner \Delta_Y\lrcorner=0.$ 
We see 
\begin{eqnarray*}
&&H^1(Y, K_Y+\Delta_Y+f^*(B+rN+N'))\\
&=&H^1(Y, K_Y+\ulcorner f^*B\urcorner+f^*(rN+N'))\\
&=&H^1(Y, K_Y+\ulcorner f^*B-\epsilon E\urcorner
+f^*(rN+N'))\\
&=&0.
\end{eqnarray*}
The first equality follows from $\llcorner \Delta_Y\lrcorner=0$. 
The third equality follows from Step~1 
when $r\gg 0$. 
\end{step}
\end{proof}

By this theorem, 
we obtain the relative Kawamata--Viehweg 
vanishing theorem 
for non-trivial morphisms. 

\begin{cor}\label{relativekvv}
Let $\pi:X\to S$ be a proper morphism 
from a smooth surface $X$ to a variety $S$. 
Let $B$ be a $\pi$-nef and $\pi$-big \R-divisor whose 
fractional part is simple normal crossing. 
Assume $\dim \pi(X)\geq 1.$ 
Then 
$$R^i\pi_*\mathcal O_X(K_X+\ulcorner B\urcorner)=0$$
for every $i>0$. 
\end{cor}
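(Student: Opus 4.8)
The goal is to deduce the relative vanishing $R^i\pi_*\mathcal O_X(K_X+\ulcorner B\urcorner)=0$ from the absolute theorem just proved (Theorem~\ref{kvvtype}). The standard mechanism is to check that a higher direct image sheaf vanishes by testing it against sufficiently positive line bundles pulled back from the base $S$, converting a relative statement into an absolute cohomology computation via the Leray spectral sequence.

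\medskip

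\emph{Plan.} First I would reduce to the case where $S$ is projective. The statement is local on $S$, and $R^i\pi_*\mathcal O_X(K_X+\ulcorner B\urcorner)$ is a coherent sheaf on $S$; to prove it is zero it suffices to work after restricting to affine opens, and then compactify so that $S$ is projective (replacing $\pi$ by a proper map to a projective variety $S$, keeping $X$ smooth by resolving if necessary). Let $H$ be a very ample \Z-divisor on $S$. By the projection formula,
\[
R^i\pi_*\mathcal O_X(K_X+\ulcorner B\urcorner)\otimes \mathcal O_S(mH)
\simeq R^i\pi_*\mathcal O_X(K_X+\ulcorner B\urcorner+m\pi^*H).
\]
For $m\gg 0$ the sheaf on the left is globally generated and its higher cohomology on $S$ vanishes (Serre), so by the Leray spectral sequence
\[
H^0\bigl(S, R^i\pi_*\mathcal O_X(K_X+\ulcorner B\urcorner)\otimes \mathcal O_S(mH)\bigr)
\simeq H^i\bigl(X, K_X+\ulcorner B\urcorner+m\pi^*H\bigr).
\]
Thus it is enough to show the right-hand absolute cohomology group vanishes for $i>0$ and $m\gg 0$; since the sheaf $R^i\pi_*(\cdots)$ is globally generated after twisting, its vanishing of global sections forces the sheaf itself to vanish.

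\medskip

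\emph{Applying the absolute theorem.} To invoke Theorem~\ref{kvvtype} I set $N:=\pi^*H$ and must verify its hypotheses for the divisor $B$ on $X$. The pullback $N=\pi^*H$ is nef since $H$ is ample. The key point is $\nu(X,N)\geq 1$, i.e. $N$ is not numerically trivial: this holds precisely because $\dim\pi(X)\geq 1$, so $H$ meets a curve in $\pi(X)$ and its pullback is nontrivial on the corresponding curve in $X$. The divisor $B$ is nef and big relative to $\pi$, but Theorem~\ref{kvvtype} requires $B$ to be genuinely nef and big on $X$; $\pi$-bigness plus the positivity coming from $m\pi^*H$ must be combined. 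Concretely, for $m\gg 0$ the divisor $B+m\pi^*H$ is nef and big on $X$ (the $\pi$-ample part of $B$ together with the pullback of an ample divisor from the image dominates), with the same simple-normal-crossing fractional part as $B$, so applying Theorem~\ref{kvvtype} with the roles rearranged — treating $B+m\pi^*H$ as the nef-and-big input and using a further small ample perturbation if needed — yields $H^i(X,K_X+\ulcorner B\urcorner+m\pi^*H)=0$ for all $i>0$ and $m\gg 0$.

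\medskip

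\emph{Main obstacle.} The delicate step is the numerical bookkeeping that guarantees $B+m\pi^*H$ is both nef and big on the total space $X$ while keeping $\ulcorner B+m\pi^*H\urcorner=\ulcorner B\urcorner+m\pi^*H$ (so that the rounding matches and $\pi^*H$ contributes no fractional part). Relative bigness of $B$ means $B$ is big on the general fiber, but when $\dim\pi(X)=1$ one must be careful that $B$ is nef and big on the whole surface once the base-ample part is added; this is where the hypothesis $\dim\pi(X)\geq 1$ is essential, matching the requirement $\nu(X,\pi^*H)\geq 1$ in Theorem~\ref{kvvtype}. Verifying that these positivity conditions hold simultaneously, and that $r(B+m\pi^*H,\,\pi^*H)$ can be absorbed into the choice of $m$, is the technical heart; the rest is the formal Leray/projection-formula machinery together with global generation to pass from vanishing of sections back to vanishing of the sheaf.
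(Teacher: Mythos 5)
Your global skeleton --- localize on $S$, compactify, twist by $m\pi^*H$, use Leray together with Serre vanishing and global generation to reduce to the absolute statement $H^i(X, K_X+\ulcorner B\urcorner+m\pi^*H)=0$, and take $N:=\pi^*H$ with $\nu(X,N)\geq 1$ coming from $\dim\pi(X)\geq 1$ --- is exactly the paper's mechanism. But there is a genuine gap at the step you yourself flag as the ``technical heart'': you assert that $B+m\pi^*H$ is nef and big on $X$ for $m\gg 0$ because ``the $\pi$-ample part of $B$ together with the pullback of an ample divisor from the image dominates.'' This globalization is a theorem for $\pi$-\emph{ample} divisors (if $A$ is $\pi$-ample and $H$ is ample on projective $S$, then $A+m\pi^*H$ is ample for $m\gg 0$), but it is false for $\pi$-nef divisors in general: there exist proper morphisms and divisors nef on every fiber such that no twist $B+m\pi^*H$ is nef, essentially because nefness must also be tested against the infinitely many non-contracted curves, on which $B$ may be arbitrarily negative compared with $\pi^*H$. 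Your proposal contains no argument ruling this out --- the $\pi$-bigness of $B$ does not by itself supply one, particularly when $\dim\pi(X)=1$, where $\pi^*H$ is not big --- so the nef-and-big hypothesis of Theorem~\ref{kvvtype} is never actually verified. A related difficulty infects your compactification step: you compactify first and tacitly assume the extended divisor stays $\pi$-nef and $\pi$-big over the boundary, which is equally unjustified.

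The paper avoids both traps by inserting one reduction you are missing: \emph{before} localizing or compactifying, it applies the argument of Step~2 of Theorem~\ref{kvvtype} (a composition of point blow-ups plus the relative version of Kodaira's lemma, descending the vanishing through each blow-up via the two Leray exact sequences) to replace $B$ by a $\pi$-\emph{ample} divisor $A$ whose fractional part is simple normal crossing. Only then does it restrict to affine opens and compactify, citing the proof of \cite[Theorem~1-2-3]{KMM}, precisely because $\pi$-ampleness --- unlike $\pi$-nef-and-bigness --- can be preserved under compactification. After that, $A+m_0\pi^*A_S$ is genuinely ample for some fixed $m_0$, with unchanged fractional part since $\pi^*A_S$ is integral, and Theorem~\ref{kvvtype} applies cleanly with $N:=\pi^*A_S$. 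Your ``further small ample perturbation if needed'' and the worry about absorbing the threshold $r(B+m\pi^*H,\,\pi^*H)$ into the choice of $m$ are symptoms of this missing reduction; once the $\pi$-ample case is reached first, both issues disappear and the remainder of your Leray/projection-formula argument goes through as written.
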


\begin{proof}
By the same argument as Step~2 of Theorem~\ref{kvvtype}, 
we may assume that 
$B=:A$ is $\pi$-ample. 
We may assume that $S$ is affine. 
Moreover, by taking suitable compactifications of $S$ 
and $X\to S$, 
we may assume that $X$ and $S$ are projective. 
(See, for example, the proof of \cite[Theorem~1-2-3]{KMM}.)
Let $A_S$ be an ample invertible sheaf on $S$ and 
set $N:=\pi^*A_S.$ 
Then $\nu(X, N)\geq 1$. 
Therefore the assertion follows from 
Theorem~\ref{kvvtype} and the following Leray spectral sequence 
\begin{eqnarray*}
E_2^{i, j}:=H^i(S, R^j\pi_*\mathcal O_X(K_X+\ulcorner B\urcorner)\otimes A_S^{\otimes m})\\
\Rightarrow 
H^{i+j}(X, K_X+\ulcorner B\urcorner+m\pi^*A_S)=:E^{i+j}.
\end{eqnarray*}
\end{proof}

In order to generalize the above weak Kawamata--Viehweg vanishing theorem 
to a vanishing theorem of Nadel type, we recall the definition of the multiplier ideals.

\begin{dfn}\label{defmultiplier}
Let $X$ be a normal surface and 
let $\Delta$ be an \R-divisor on $X$ such that 
$K_X+\Delta$ is \R-Cartier. 
Let $\mu:X'\to X$ be 
a log resolution of $(X, \Delta).$ 
We define {\em a multiplier ideal sheaf} 
$\mathcal J_{\Delta}$ 
by $$\mathcal J_{\Delta}:=
\mu_*\mathcal O_{X'}(K_{X'}-
\llcorner\mu^*(K_X+\Delta) \lrcorner).$$
Note that, in the case of surfaces, 
we can use the resolution of singularities in positive characteristic  (cf. \cite{Lipman2}). 
Thus, we can establish some fundamental properties (cf. \cite[Chapter~9]{Lazarsfeld}). 
For example, we see that 
$\mathcal J_{\Delta}$ is independent of log resolutions and that 
if $\Delta\geq 0$, then 
$\mathcal J_{\Delta}\subset \mathcal O_X$. 
\end{dfn}

Now, we prove the weak Nadel vanishing theorem, which is the main theorem in this section.

\begin{thm}[weak Nadel vanishing theorem]\label{nadeltype}
Let $X$ be a projective normal surface and 
let $\Delta$ be an \R-divisor such that 
$K_X+\Delta$ is \R-Cartier. 
Let $N$ be a nef \R-Cartier \R-divisor with 
$\nu(X, N)\geq 1$. 
Let $L$ be a Cartier divisor such that 
$L-(K_X+\Delta)$ is nef and big. 
Then there exists a positive 
real number $r(\Delta, L, N)$ 
such that 
$$H^i(X, \mathcal O_X(L+rN+N')
\otimes \mathcal J_{\Delta})=0$$
for every $i>0$, every positive real number 
$r\geq r(\Delta, L, N)$ and 
every nef \R-Cartier \R-divisor $N'$ such that 
$rN+N'$ is a Cartier divisor.
\end{thm}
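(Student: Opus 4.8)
The plan is to pull everything back to a log resolution, transfer the cohomology there by relative vanishing, and then invoke the weak Kawamata--Viehweg vanishing theorem already established in Theorem~\ref{kvvtype}. First I would fix a log resolution $\mu\colon X'\to X$ of $(X,\Delta)$ and set $B_X:=L-(K_X+\Delta)$ and $\Theta:=\mu^{*}(K_X+\Delta)$. Since $L+rN+N'$ is a Cartier divisor on $X$ (as $L$ is Cartier and $rN+N'$ is Cartier by hypothesis), the projection formula together with Definition~\ref{defmultiplier} gives $\mathcal O_X(L+rN+N')\otimes\mathcal J_{\Delta}=\mu_{*}\mathcal O_{X'}(D)$, where $D$ is the integral divisor
$$D=\mu^{*}(L+rN+N')+K_{X'}-\llcorner\Theta\lrcorner=K_{X'}+\{\Theta\}+\mu^{*}B_X+r\mu^{*}N+\mu^{*}N'.$$
The second equality uses $\mu^{*}L=\Theta+\mu^{*}B_X$; note $\{\Theta\}+\mu^{*}B_X=\mu^{*}L-\llcorner\Theta\lrcorner$ is genuinely integral.

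Next I would show the higher direct images vanish so that the Leray spectral sequence collapses the problem onto $X'$. By the projection formula it suffices to prove $R^{q}\mu_{*}\mathcal O_{X'}(K_{X'}-\llcorner\Theta\lrcorner)=0$ for $q>0$, and since $K_{X'}-\llcorner\Theta\lrcorner=K_{X'}+\ulcorner-\Theta\urcorner$ with $-\Theta=-\mu^{*}(K_X+\Delta)$ being $\mu$-numerically trivial (hence $\mu$-nef, and $\mu$-big because $\mu$ is birational) and having simple normal crossing fractional part, this is exactly Corollary~\ref{relativekvv}. The Leray spectral sequence for $\mu$ then yields $H^{i}(X,\mathcal O_X(L+rN+N')\otimes\mathcal J_{\Delta})\cong H^{i}(X',\mathcal O_{X'}(D))$ for every $i$.

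It remains to compute $H^{i}(X',\mathcal O_{X'}(D))$. Here $\mu^{*}N$ is nef with $\nu(X',\mu^{*}N)=\nu(X,N)\ge1$ (the self-intersection is preserved by a birational pullback, so the numerical dimension is), $\mu^{*}N'$ is nef, and $r\mu^{*}N+\mu^{*}N'=\mu^{*}(rN+N')$ is Cartier. For $i=2$ the vanishing is immediate from Proposition~\ref{h2vanishing}, applied with $F=\mathcal O_{X'}(K_{X'}+\{\Theta\}+\mu^{*}B_X)$ and the nef divisor $\mu^{*}N$. For $i=1$ I would like to feed $\{\Theta\}+\mu^{*}B_X$ into Theorem~\ref{kvvtype} as its nef and big divisor, and this is the crux: although $\mu^{*}B_X$ is nef and big, the effective boundary $\{\Theta\}$ destroys nefness, so the theorem does not apply verbatim. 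To remedy this I would re-run the reduction of Step~2 in the proof of Theorem~\ref{kvvtype}: using Kodaira's lemma write $\mu^{*}B_X\sim_{\mathbb R}A'+E'$ with $A'$ ample and $E'\ge0$ of arbitrarily small coefficients, refine $\mu$ so that $\Supp(\{\Theta\})\cup\Supp(E')$ is simple normal crossing and $\{\Theta\}+E'$ has coefficients in $[0,1)$, and then peel off this boundary by blowing up points, using the two Leray exact sequences together with the $\mult_P$-dichotomy precisely as in that step. This reduces matters to the ample case (Step~1 of Theorem~\ref{kvvtype}), which is settled by the cyclic cover of Proposition~\ref{cycliccover} and the weak Kodaira vanishing of Proposition~\ref{h1proto}.

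The main obstacle is exactly this last reduction: converting the big-but-not-nef divisor $\{\Theta\}+\mu^{*}B_X$ into the ample-plus-boundary shape required by the $i=1$ vanishing. The bookkeeping of how a blow-up changes $\mult_P$ of the boundary, and of arranging that the ample perturbation retains simple normal crossing fractional part, is where the real work lies. The uniformity of the threshold $r(\Delta,L,N)$ in $N'$ then follows automatically, since Proposition~\ref{h1proto} (via the Fujita vanishing theorem) supplies a bound independent of $N'$.
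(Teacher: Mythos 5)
Your overall route is exactly the paper's: pass to a log resolution $\mu\colon X'\to X$, identify $\mathcal O_X(L+rN+N')\otimes\mathcal J_{\Delta}$ with $\mu_*\mathcal O_{X'}(D)$ via the projection formula, kill $R^j\mu_*$ for $j>0$ by Corollary~\ref{relativekvv} (your observation that $-\Theta=-\mu^*(K_X+\Delta)$ is $\mu$-numerically trivial, hence $\mu$-nef and $\mu$-big with simple normal crossing fractional part, is the same argument the paper uses), and finish on $X'$ by the global weak Kawamata--Viehweg theorem. The one place you diverge is the alleged ``crux'' at $i=1$, and it is a phantom obstacle. You wrote
$$D=K_{X'}+\{\Theta\}+\mu^*B_X+r\mu^*N+\mu^*N',$$
but since $\mu^*L$ is integral,
$$\{\Theta\}+\mu^*B_X=\mu^*L-\llcorner\Theta\lrcorner=\ulcorner\mu^*B_X\urcorner,
\qquad\text{so}\qquad
D=K_{X'}+\ulcorner\mu^*B_X\urcorner+r\mu^*N+\mu^*N'.$$
Theorem~\ref{kvvtype} does not ask the \emph{rounded-up} divisor to be nef and big; it asks this of $B$ itself, before rounding. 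Taking $B:=\mu^*B_X$, which is nef and big (pullback of a nef and big divisor under a birational morphism) with $\{\mu^*B_X\}=\{-\Theta\}$ supported on the simple normal crossing locus of the log resolution, the theorem applies verbatim and gives $H^1(X',\mathcal O_{X'}(D))=0$ for $r\gg0$, uniformly in $N'$. The boundary $\{\Theta\}$ that you feared ``destroys nefness'' is precisely the round-up discrepancy $\ulcorner B\urcorner-B$ that Step~2 of Theorem~\ref{kvvtype} was designed to absorb. This is exactly how the paper concludes.

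Consequently, the reduction you sketch at the end --- Kodaira's lemma, refining the resolution, peeling off the boundary by blow-ups with the $\mult_P$ dichotomy --- is a re-derivation of Step~2 of Theorem~\ref{kvvtype} in the special case at hand. Had that detour genuinely been needed, leaving its execution as ``where the real work lies'' would constitute a gap in your proposal; since it is not needed, the proof closes with a one-line citation of Theorem~\ref{kvvtype}, and the threshold $r(\Delta,L,N)$ independent of $N'$ comes directly from the threshold $r(B,N)$ in that theorem. (Your separate treatment of $i=2$ via Proposition~\ref{h2vanishing} is correct but likewise redundant, as Theorem~\ref{kvvtype} already covers $i=2$; note also that its applicability there again rests on the integrality identity $\{\Theta\}+\mu^*B_X=\ulcorner\mu^*B_X\urcorner$ that you overlooked at $i=1$.)
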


\begin{proof}
Let $\mu:X'\to X$ be a log resolution 
of $(X, \Delta).$ 
Set 
$$M:=\mu^*(L+rN+N')+K_{X'}-
\llcorner\mu^*(K_X+\Delta) \lrcorner.$$
Consider the following Leray spectral sequence: 
$$E^{i,j}_2:=H^i(X, R^j\mu_*\mathcal O_{X'}(M))
\Rightarrow H^{i+j}(X', \mathcal O_{X'}(M))=:E^{i+j}.$$
The assertion is equivalent to $E^{i,0}=0.$ 
We see 
$$M=K_{X'}+\ulcorner\mu^*(L-(K_X+\Delta)) 
\urcorner+r\mu^*N+\mu^*N'.$$ 
Thus, by Theorem~\ref{relativekvv}, 
we have $E^{i,j}_2=0$ for $j>0$. 
This means $E^{i,0}_2=E^i$. 
Moreover, by Theorem~\ref{kvvtype}, 
we see that $E^i=0$ for $r\gg 0$. 
\end{proof}

\begin{thm}\label{relativenadel}
Let $\pi:X\to S$ be a proper morphism 
from a normal surface $X$ to a variety $S$. 
Let $\Delta$ be an \R-divisor such that 
$K_X+\Delta$ is \R-Cartier. 
Let $L$ be a Cartier divisor 
such that $L-(K_X+\Delta)$ is 
$\pi$-nef and $\pi$-big. 
Assume $\dim \pi(X)\geq 1.$ 
Then 
$$R^i\pi_*(\mathcal O_X(L)\otimes \mathcal J_{\Delta})=0$$
for every $i>0$. 
\end{thm}

\begin{proof}
We may assume $i=1$. 
Let $\mu:X'\to X$ be a log resolution 
of $(X, \Delta).$ 
We have 
$$0\to R^1\pi_*(\mathcal O_X(L)
\otimes \mathcal J_{\Delta})
\to R^1(\pi\circ\mu)_*
(\mathcal O_{X'}(K_{X'}+\ulcorner\mu^*(L-(K_X+\Delta)) 
\urcorner))$$
by the exact sequence induced from 
the corresponding Grothendieck--Leray spectral sequence. 
The latter term vanishes by 
Corollary~\ref{relativekvv}. 
\end{proof}

The following two results are vanishing theorems of Kawamata--Viehweg type 
for klt surfaces.

\begin{thm}\label{kltkvv}
Let $(X, \Delta)$ be a projective klt surface 
where $\Delta$ is an effective \R-divisor. 
Let $N$ be a nef \R-Cartier \R-divisor with 
$\nu(X, N)\geq 1$. 
Let $D$ be a \Q-Cartier \Z-divisor such that 
$D-(K_X+\Delta)$ is nef and big. 
Then there exists a positive 
real number $r(\Delta, D, N)$ 
such that 
$$H^i(X, \mathcal O_X(D+rN+N'))=0$$
for every $i>0$, every positive real number 
$r\geq r(\Delta, D, N)$ and 
every nef \R-Cartier \R-divisor $N'$ such that 
$rN+N'$ is a Cartier divisor.
\end{thm}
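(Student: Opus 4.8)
The plan is to deduce the statement from the weak Nadel vanishing theorem (Theorem~\ref{nadeltype}), the point being that a klt pair has trivial multiplier ideal. Concretely, let $\mu:X'\to X$ be a log resolution of $(X,\Delta)$ and write $K_{X'}=\mu^*(K_X+\Delta)+\sum_i a_iE_i$, where the sum runs over the $\mu$-exceptional divisors together with the strict transforms of the components of $\Delta$. Since $(X,\Delta)$ is klt we have $a_i>-1$ for every $i$, and since $\Delta$ is effective with $\llcorner\Delta\lrcorner=0$ the strict transforms of its components contribute coefficients in $(-1,0]$. As $K_{X'}$ is integral one computes $K_{X'}-\llcorner\mu^*(K_X+\Delta)\lrcorner=\ulcorner\sum_i a_iE_i\urcorner$, which is an effective $\mu$-exceptional \Z-divisor, so $\mathcal J_{\Delta}=\mu_*\mathcal O_{X'}(\ulcorner\sum_i a_iE_i\urcorner)=\mathcal O_X$. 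Hence $\mathcal O_X(D+rN+N')\otimes\mathcal J_{\Delta}=\mathcal O_X(D+rN+N')$, and the assertion is formally the conclusion of Theorem~\ref{nadeltype} with $L:=D$.

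The only gap is that Theorem~\ref{nadeltype} assumes $L$ Cartier, whereas $D$ is merely \Q-Cartier, so I would re-run the proof of that theorem with this change. Set $M:=\mu^*(D+rN+N')+K_{X'}-\llcorner\mu^*(K_X+\Delta)\lrcorner$, which makes sense because $D+rN+N'$ is \R-Cartier. As in that proof one rewrites $M=K_{X'}+\ulcorner\mu^*(D-(K_X+\Delta))\urcorner+\mu^*(rN+N')$, using only the identity $\ulcorner E\urcorner=E+\{-E\}$, so that Cartier-ness of $D$ is not needed here. The Leray spectral sequence for $\mu$ together with relative Kawamata--Viehweg vanishing (Corollary~\ref{relativekvv}, applicable because $\mu^*(D-(K_X+\Delta))$ is the pullback of a nef and big divisor, hence $\mu$-nef and $\mu$-big, has simple normal crossing fractional part after a suitable choice of $\mu$, and $\dim\mu(X')=2\geq 1$) yields $R^j\mu_*\mathcal O_{X'}(M)=0$ for $j>0$, whence $H^i(X,\mu_*\mathcal O_{X'}(M))=H^i(X',\mathcal O_{X'}(M))$. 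Finally, since $\mu^*N$ is nef with $\nu(X',\mu^*N)\geq 1$ (numerical non-triviality is preserved under birational pullback) and $\mu^*(rN+N')$ is a \Z-divisor, the weak Kawamata--Viehweg vanishing theorem (Theorem~\ref{kvvtype}) gives $H^i(X',\mathcal O_{X'}(M))=0$ for all $r\geq r(\Delta,D,N)$.

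It remains to identify $\mu_*\mathcal O_{X'}(M)$ with $\mathcal O_X(D+rN+N')$, and this is the step I expect to be the main obstacle, precisely because $D$ is not Cartier. Since $rN+N'$ is Cartier, the projection formula reduces this to the equality $\mu_*\mathcal O_{X'}(K_{X'}+\ulcorner\mu^*(D-(K_X+\Delta))\urcorner)=\mathcal O_X(D)$, which is the analogue of $\mathcal J_{\Delta}=\mathcal O_X$ twisted by the non-Cartier \Z-divisor $D$. I would verify it as an equality of subsheaves of the constant sheaf $K(X)$ by the two inclusions, after noting $K_{X'}+\ulcorner\mu^*(D-(K_X+\Delta))\urcorner=\ulcorner\mu^*D+\sum_i a_iE_i\urcorner$. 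For the inclusion $\supseteq$, if $\mathrm{div}_X(f)+D\geq 0$ then $\mu^*(\mathrm{div}_X(f)+D)\geq 0$, and adding $\sum_i a_iE_i$ keeps every coefficient $>-1$ because $a_i>-1$, so the round-up stays effective and $f$ lies in the left-hand side. For the inclusion $\subseteq$, one pushes forward an effective divisor and uses that the non-exceptional part of $\ulcorner\mu^*D+\sum_i a_iE_i\urcorner$ equals the strict transform $\mu^{-1}_*D$; this holds because $\ulcorner d-\delta\urcorner=d$ for the coefficient $d\in\mathbb Z$ of $D$ and the coefficient $\delta\in[0,1)$ of $\Delta$ along any prime divisor, so $\mu_*$ sends this divisor to $D$ and effectivity descends. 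Granting this identification, the chain of equalities above gives $H^i(X,\mathcal O_X(D+rN+N'))=0$ for every $i>0$ and every $r\geq r(\Delta,D,N)$, as required.
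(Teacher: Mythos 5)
Your proposal is correct and is essentially the paper's own proof: the paper proves Theorem~\ref{kltkvv} by precisely this re-run of the argument for Theorem~\ref{nadeltype} --- the same log resolution and divisor $M$, the Leray spectral sequence, $R^j\mu_*\mathcal O_{X'}(M)=0$ for $j>0$ via Corollary~\ref{relativekvv}, $E^i=0$ for $r\gg 0$ via Theorem~\ref{kvvtype}, and the identification $\mu_*\mathcal O_{X'}(M)\simeq\mathcal O_X(D+rN+N')$, which the paper justifies by writing $M=\llcorner\mu^*(D+rN+N')\lrcorner+(\text{effective exceptional $\mathbb{Z}$-divisor})$ using that $(X,\Delta)$ is klt and $D$ is a $\mathbb{Z}$-divisor, i.e.\ the same computation you spell out (in more detail than the paper). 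The one slip is notational: since $\mu^*D$ need not be integral, you must define $M$ with an overall round-up, $M:=\ulcorner\mu^*(D+rN+N')+K_{X'}-\mu^*(K_X+\Delta)\urcorner$ as the paper does, and your rewriting $M=K_{X'}+\ulcorner\mu^*(D-(K_X+\Delta))\urcorner+\mu^*(rN+N')$ then holds because $K_{X'}$ and $\mu^*(rN+N')$ are $\mathbb{Z}$-divisors (round-up commutes with adding integral divisors), not via the identity $\ulcorner E\urcorner=E+\{-E\}$, while your unrounded $M$ would not even define a coherent sheaf.
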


\begin{proof}
Let $\mu:X'\to X$ be a log resolution 
of $(X, \Delta).$ 
Set 
$$M:=\ulcorner\mu^*(D+rN+N')+K_{X'}-\mu^*(K_X+\Delta) \urcorner.$$
Consider the following Leray spectral sequence: 
$$E^{i,j}_2:=H^i(X, R^j\mu_*\mathcal O_{X'}(M))
\Rightarrow H^{i+j}(X', \mathcal O_{X'}(M))=:E^{i+j}.$$
The assertion is equivalent to $E^{i,0}_2=0$ because 
\begin{eqnarray*}
&&\mu_*\mathcal O_{X'}(M)\\
&=&\mu_*\mathcal O_{X'}(\ulcorner\mu^*(D+rN+N')+K_{X'}-\mu^*(K_X+\Delta) \urcorner)\\
&=&\mu_*\mathcal O_{X'}(\llcorner\mu^*(D+rN+N')\lrcorner+
({\rm effective\,\,exceptional\,\,\mathbb Z-divisor}))\\
&\simeq&\mathcal O_{X}(D+rN+N').\\
\end{eqnarray*}
The above second equality holds 
because $(X, \Delta)$ is klt and $D$ is a \Z-divisor. 
We see 
$$M=K_{X'}+\ulcorner\mu^*(D-(K_X+\Delta)) 
\urcorner+r\mu^*N+\mu^*N'.$$ 
Thus, by Theorem~\ref{relativekvv}, 
we have $E^{i,j}_2=0$ for $j>0$. 
This means $E^{i,0}_2=E^i$. 
Moreover, by Theorem~\ref{kvvtype}, 
we see that $E^i=0$ for $r\gg 0$. 
\end{proof}

\begin{thm}\label{relativekltkvv}
Let $\pi:X\to S$ be a proper morphism 
from a normal surface $X$ to a variety $S$. 
Assume that 
$(X, \Delta)$ is a klt surface 
where $\Delta$ is an effective \R-divisor. 
Let $D$ be a \Q-Cartier \Z-divisor 
such that $D-(K_X+\Delta)$ is 
$\pi$-nef and $\pi$-big. 
Assume $\dim \pi(X)\geq 1.$ 
Then 
$$R^i\pi_*(\mathcal O_X(D))=0$$
for every $i>0$. 
\end{thm}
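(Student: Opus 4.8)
The plan is to imitate the proof of Theorem~\ref{relativenadel}, replacing the multiplier-ideal identity used there by the klt computation already carried out in the proof of Theorem~\ref{kltkvv}. Since $X$ is a surface and $\dim\pi(X)\geq 1$, no fiber of $\pi$ can be all of $X$, so every fiber has dimension at most one; hence $R^i\pi_*\mathcal{O}_X(D)=0$ automatically for $i\geq 2$, and I may assume $i=1$.

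First I would take a log resolution $\mu:X'\to X$ of $(X,\Delta)$, chosen so that the fractional part of $\mu^*(D-(K_X+\Delta))$ is simple normal crossing, and set
$$M:=K_{X'}+\ulcorner\mu^*(D-(K_X+\Delta))\urcorner.$$
Exactly as in Theorem~\ref{kltkvv}, the fact that $(X,\Delta)$ is klt together with the integrality of $D$ gives $\mu_*\mathcal{O}_{X'}(M)\simeq\mathcal{O}_X(D)$: indeed $M=\llcorner\mu^*D\lrcorner+(\text{effective exceptional }\mathbb{Z}\text{-divisor})$, and pushing forward recovers $\mathcal{O}_X(D)$.

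Next I would feed $M$ into the Grothendieck--Leray spectral sequence for the composite $\pi\circ\mu:X'\to S$. Its low-degree exact sequence, combined with the identification above, yields an injection
$$0\to R^1\pi_*\mathcal{O}_X(D)\to R^1(\pi\circ\mu)_*\mathcal{O}_{X'}(M).$$
To conclude it then suffices to kill the right-hand term. Writing $B:=\mu^*(D-(K_X+\Delta))$ we have $M=K_{X'}+\ulcorner B\urcorner$, and since $\mu$ is birational while $D-(K_X+\Delta)$ is $\pi$-nef and $\pi$-big, the pullback $B$ is $(\pi\circ\mu)$-nef and $(\pi\circ\mu)$-big with $\dim(\pi\circ\mu)(X')=\dim\pi(X)\geq 1$. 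Hence Corollary~\ref{relativekvv} applies to $\pi\circ\mu$ and gives $R^1(\pi\circ\mu)_*\mathcal{O}_{X'}(M)=0$.

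The routine points are the fiber-dimension reduction to $i=1$ and the verification that $B$ stays $(\pi\circ\mu)$-nef and $(\pi\circ\mu)$-big under the birational pullback. The only genuine content is the klt identity $\mu_*\mathcal{O}_{X'}(M)\simeq\mathcal{O}_X(D)$, but this is precisely what was established inside the proof of Theorem~\ref{kltkvv}, so I expect no serious obstacle; the argument is a purely relative repackaging of the absolute statement via Corollary~\ref{relativekvv}.
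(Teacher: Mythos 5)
Your proposal is correct and takes essentially the same route as the paper: reduce to $i=1$, take a log resolution $\mu$, use the klt identity $\mu_*\mathcal O_{X'}(K_{X'}+\ulcorner\mu^*(D-(K_X+\Delta))\urcorner)\simeq\mathcal O_X(D)$ from the proof of Theorem~\ref{kltkvv} together with the low-degree injection from the Grothendieck--Leray spectral sequence, and kill the target term by Corollary~\ref{relativekvv} applied to $\pi\circ\mu$. Your explicit verifications (the fiber-dimension reduction to $i=1$ and the $(\pi\circ\mu)$-nefness and bigness of the pullback) are precisely the steps the paper leaves implicit.
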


\begin{proof}
We may assume $i=1$. 
Let $\mu:X'\to X$ be a log resolution 
of $(X, \Delta).$ 
We have 
$$0\to R^1\pi_*(\mathcal O_X(D))
\to R^1(\pi\circ\mu)_*
(\mathcal O_{X'}(K_{X'}+\ulcorner\mu^*(D-(K_X+\Delta)) 
\urcorner))$$
by the exact sequence induced from the corresponding 
Grothendieck--Leray spectral sequence and the proof of Theorem~\ref{kltkvv}. 
The latter term vanishes by 
Corollary~\ref{relativekvv}. 
\end{proof}

\section{X-method for surfaces}

In this section, 
we apply the vanishing theorems 
which are established in Section~2 
to the minimal model theory. 
First, we see the 
non-vanishing theorem.

\begin{thm}[Non-vanishing theorem]\label{nonvanishing}
Let $(X, -G)$ be a projective klt surface 
where $G$ is a \Q-divisor. 
Note that $-G$ may not be effective. 
Let $D$ be a nef Cartier divisor $D$ 
such that $\nu(X, D)\geq 1$ and 
$aD-(K_X-G)$ is 
nef and big for some $a\in \mathbb Z_{>0}$. 

Then there exists a positive integer $m_0$ 
such that 
$$H^0(X, mD+\ulcorner G\urcorner)\neq 0.$$
for $m\geq m_0.$
\end{thm}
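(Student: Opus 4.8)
The plan is to run Shokurov's non-vanishing method: produce a nonzero section by creating an isolated non-klt point and lifting a section off it with the weak Nadel vanishing theorem (Theorem~\ref{nadeltype}), where the auxiliary nef divisor of positive numerical dimension is $D$ itself. First I would reduce to the case that $X$ is smooth. Taking a log resolution $\mu\colon X'\to X$ of $(X,-G)$ (available for surfaces in characteristic $p$ by \cite{Lipman2}) and setting $K_{X'}-G'=\mu^*(K_X-G)$, the klt hypothesis makes $-G'$ have simple normal crossing support with coefficients $<1$, while the round-ups of the exceptional discrepancies are effective; hence $H^0(X',m\mu^*D+\ulcorner G'\urcorner)=H^0(X,mD+\ulcorner G\urcorner)$, and $\mu^*D$ stays nef with $\nu(X',\mu^*D)\ge 1$ while $a\mu^*D-(K_{X'}-G')=\mu^*(aD-(K_X-G))$ stays nef and big. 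Passing to $X'$ also makes $\ulcorner G\urcorner$ Cartier, which is what lets it be fed into Theorem~\ref{nadeltype}. So I assume $X$ smooth from now on.

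Set $R:=aD-(K_X-G)$, which is nef and big. The engine of the construction is a Hodge-index observation: since $D$ is nef and not numerically trivial and $R$ is nef and big, one has $D\cdot R>0$, so $R_t:=R+tD$ is nef and big with $R_t^2=R^2+2t(D\cdot R)+t^2D^2\to\infty$ as $t\to\infty$, even when $D^2=0$. Fixing $t\gg 0$ and a general closed point $x\in X$ (in particular $x\notin\Supp\{-G\}$), I would use the bigness of $R_t$ together with the elementary count of the conditions imposed by vanishing to high order at a smooth point to produce an effective $\mathbb{Q}$-divisor $\Gamma\sim_{\mathbb{Q}}\beta_t R_t$ with $\beta_t\approx 2/\sqrt{R_t^2}$ (so $\beta_t<1$ for $t\gg 0$) and $\mult_x\Gamma$ so large that, setting $\Delta':=\{-G\}+\Gamma$, the pair $(X,\Delta')$ is klt away from $x$ and has multiplier ideal $\mathcal J_{\Delta'}=\mathfrak m_x$, the ideal sheaf of $x$.

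With this $\Delta'$ in hand the lifting is short. Choose an integer $b\gg 0$ and put $L:=bD+\ulcorner G\urcorner$, a Cartier divisor; then $L-(K_X+\Delta')\equiv (b-a-\beta_t t)D+(1-\beta_t)R$, which is nef and big once $\beta_t<1$ and $b\ge a+\beta_t t$. Applying Theorem~\ref{nadeltype} with this $L$, with $\Delta=\Delta'$, with $N=D$ and with $N'=0$ gives $H^1\bigl(X,\mathcal O_X(L+rD)\otimes\mathcal J_{\Delta'}\bigr)=0$ for every integer $r\ge r_0$. Since $\mathcal O_X/\mathcal J_{\Delta'}$ is the skyscraper $k(x)$, the restriction sequence forces $H^0(X,\mathcal O_X(L+rD))\to k(x)$ to be surjective, so $H^0(X,L+rD)\ne 0$. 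As $L+rD=(b+r)D+\ulcorner G\urcorner$, writing $m=b+r$ yields $H^0(X,mD+\ulcorner G\urcorner)\ne 0$ for every $m\ge m_0:=b+r_0$, as required.

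The hard part will be the construction of $\Gamma$ in the second paragraph: arranging that the non-klt locus of $(X,\Delta')$ is exactly the single point $x$ and that $\mathcal J_{\Delta'}$ is precisely $\mathfrak m_x$ (not a thicker or larger subscheme) requires a general-position and tie-breaking argument, controlling the coefficients of $\{-G\}$ near and away from $x$ and choosing $\Gamma$ with an ordinary singularity at $x$ and smooth elsewhere. The Hodge-index input is what makes the scheme run uniformly in the genuinely delicate regime $\nu(X,D)=1$, where Riemann--Roch growth may fail; by contrast, when $D^2>0$ one can bypass the whole construction, since then $\chi(X,mD+\ulcorner G\urcorner)\to+\infty$ while $h^2=0$ for $m\gg 0$ by Serre duality, giving $h^0\ge\chi>0$ directly.
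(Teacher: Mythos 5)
Your proposal is correct in outline, but it takes a genuinely different route from the paper. The paper's proof is a citation-plus-patch: it runs the proof of \cite[Theorem~3.4]{KM} (Shokurov's non-vanishing) essentially verbatim --- Step~1 is vacuous since $\nu(X,D)\geq 1$ is assumed, Steps~0 and 2--5 (the smooth reduction, the Hodge-index computation showing $D\cdot(aD-(K_X-G))>0$ so self-intersections grow, the concentration of multiplicity at a general point, the blow-up, and the perturbation of the $p_j$ singling out a divisorial log canonical center $F$ with coefficient exactly $-1$) go through unchanged in characteristic $p$, and only Step~6 is modified: the vanishing $H^1(Y,K_Y+\ulcorner N(b,c)\urcorner)=0$ is supplied by Theorem~\ref{kvvtype} for $b\gg 0$, after which the section is lifted from the curve $F$ by non-vanishing for curves. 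You share the same numerical engine ($D\cdot R>0$, hence $R_t^2\to\infty$, exactly KM's Step~2) and the same smooth reduction (KM's Step~0), but your endgame is different: instead of restricting to a divisorial center and inducting to curves, you force a zero-dimensional non-klt point at a general $x$ and lift the skyscraper section using the weak Nadel vanishing Theorem~\ref{nadeltype}. What each buys: your route dispenses with the $N(b,c)$ bookkeeping and the induction-to-curves of KM's Step~6, invoking only Theorem~\ref{nadeltype}; the paper's route avoids the isolated-center construction, since KM's tie-breaking only has to make \emph{some} divisor appear with coefficient $-1$, which is milder than making the minimal center a point.

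Two remarks on the one step you flag as hard. First, you are demanding more than you need: it is not necessary that $\mathcal J_{\Delta'}=\mathfrak m_x$, only that $x$ be an \emph{isolated} point of the non-klt locus --- then $\mathcal O_X/\mathcal J_{\Delta'}$ splits off a nonzero skyscraper summand at $x$, and the surjection $H^0(X,\mathcal O_X(L+rD))\to H^0\bigl(X,(\mathcal O_X/\mathcal J_{\Delta'})(L+rD)\bigr)$ provided by Theorem~\ref{nadeltype} already gives a nonzero section; non-klt curve components away from $x$ do no harm. Second, with that weakening the tie-breaking is standard and characteristic-free in dimension two: if the non-klt locus of $(X,\{-G\}+c\Gamma)$ contains a curve $C$ through the general point $x$, one uses Kodaira's lemma to write $R\sim_{\mathbb Q}A+E$ with $A$ ample and $x\notin\Supp E$, takes an effective $F\sim_{\mathbb Q}\epsilon A$ with positive multiplicity along $C$ (a parameter count, as vanishing along a fixed curve imposes $O(m)$ conditions against $O(m^2)$ sections), and replaces the boundary by $(1-\delta)(\{-G\}+c\Gamma)+\eta F$ to cut the center down to $x$; only log resolutions (available by \cite{Lipman2}) and dimension counts enter, no vanishing theorems. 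So the gap you flag is real work but closable, and your argument is a valid alternative proof.
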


\begin{proof}
Since the proof is almost identical to that of \cite[Theorem~3.4]{KM}, 
we will only discuss the necessary changes to their argument. 
The numbers of \lq\lq Step"  
are the same as \cite[Theorem~3.4]{KM}. 

The argument of Step~0 works without any changes. 
Because we assume $\nu(X, D)\geq 1$, 
there is nothing to prove in Step~1. 
The arguments of Step~2, Step~3, Step~4 and Step~5 
work without any changes. 

In Step~6, we modify the argument a little. 
It is sufficient to prove 
$$H^1(Y, K_Y+\ulcorner N(b, c)\urcorner)=0$$
where $Y$ and $N(b, c)$ are the notations in \cite[Theorem~3.4]{KM}. 
Note that, by Step~4 and Step~5, we may assume that $b$ is sufficiently large. 
Then, by Theorem~\ref{kvvtype} and the definition of $N(b, c)$, 
we obtain the above vanishing result for every $b\gg 0$.
\end{proof}

Second, 
we prove the following basepoint free theorem.

\begin{thm}[Basepoint free theorem] 
\label{basepointfree}
Let $X$ be a projective normal surface and 
let $\Delta$ be a \Q-divisor 
such that $\llcorner \Delta\lrcorner=0$ and 
$K_X+\Delta$ is \Q-Cartier. 
Let $D$ be a nef Cartier divisor 
such that $\nu(X, D)\geq 1$. 
Assume $aD-(K_X+\Delta)$ is nef and big 
for some $a\in \mathbb{Z}_{>0}$. 
Then there exists a positive integer $b_0$ 
such that if $b\geq b_0$, 
then $|bD|$ is basepoint free. 
\end{thm}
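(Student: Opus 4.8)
The plan is to mimic the classical basepoint-free theorem (e.g.\ \cite[Theorem~3.3]{KM}), replacing every appeal to Kawamata--Viehweg vanishing by the weak Nadel vanishing theorem (Theorem~\ref{nadeltype}) established above, and every appeal to non-vanishing by Theorem~\ref{nonvanishing}. The key structural idea in the X-method is to produce, for a fixed large $b$, a section of $|bD|$ that does not vanish identically on a chosen basepoint; by a Noetherian argument this forces basepoint-freeness for all sufficiently large $b$. The hypothesis $\nu(X,D)\geq 1$ is exactly what lets us use the weak vanishing results, which require a nef divisor that is not numerically trivial.

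\emph{First} I would reduce the problem. By the non-vanishing theorem (Theorem~\ref{nonvanishing}), applied with $G=-\Delta$, some multiple $|m_1 D|$ is nonempty, so $D$ is effective up to multiple. The strategy is then to show that the stable base locus of $\{|bD|\}_{b\gg0}$ is empty. Arguing by contradiction, suppose a point $x\in X$ lies in $\Bs|bD|$ for arbitrarily large $b$. I would pass to a log resolution $\mu\colon X'\to X$ of $(X,\Delta)$ and construct, via a standard tie-breaking/blow-up argument, an effective \Q-divisor whose multiplier ideal $\mathcal J$ cosupports exactly the point over $x$, so that the quotient $\mathcal O_X/\mathcal J$ is supported at $x$.

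\emph{The main technical input} is the following: for suitable Cartier $L$ with $L-(K_X+\Delta')$ nef and big, the weak Nadel vanishing (Theorem~\ref{nadeltype}) gives
\begin{equation*}
H^1\!\left(X,\ \mathcal O_X(L+rD+N')\otimes \mathcal J\right)=0
\end{equation*}
for $r\gg0$. Taking the long exact sequence associated with $0\to \mathcal J\to \mathcal O_X\to \mathcal O_X/\mathcal J\to 0$, twisted by $\mathcal O_X(L+rD+N')$, the vanishing of this $H^1$ makes the restriction map $H^0(X,L+rD+N')\to H^0(\mathcal O_X/\mathcal J\otimes(\cdots))$ surjective. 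Choosing $L$ to be a multiple of $D$ and absorbing the correction into $N'$, this produces a global section of a multiple of $D$ that is nonzero at $x$, contradicting $x\in\Bs$.

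\emph{The hard part} is organizing the numerics so that the divisor $L-(K_X+\Delta')$ is genuinely nef and big and so that the twist by $rD+N'$ stays integral and of the required form; this is exactly the bookkeeping that the weak vanishing theorems are designed to tolerate, since they permit an auxiliary nef perturbation $N'$. One must also verify that the nef-and-big hypothesis $aD-(K_X+\Delta)$ survives the passage to $X'$ and the tie-breaking perturbation. Finally, I would invoke a Noetherian-induction argument on the base locus—each step strictly shrinks $\Bs$, supported on a finite set of curves and points—to conclude that $|bD|$ is basepoint free for all $b\geq b_0$. Since the numerical and cohomological inputs here differ from the characteristic-zero case only in the vanishing tool used, I expect the bulk of \cite[Theorem~3.3]{KM} to transfer verbatim, with the genuinely new content confined to the vanishing step.
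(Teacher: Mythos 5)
There is a genuine gap at the center of your plan: the step where you propose to construct, via tie-breaking, an effective $\mathbb{Q}$-divisor whose multiplier ideal $\mathcal J$ cosupports \emph{exactly} the point over $x$. Producing an isolated non-klt point at a prescribed $x$ on a surface requires imposing multiplicity at least $2$ at $x$ on a divisor drawn from the available positivity, and that requires the relevant class to be big. Here $D$ is only nef with $\nu(X,D)\geq 1$, so the case $\nu(X,D)=1$ is allowed; then $h^0(X,bD)$ grows only linearly and sections of $|bD|$ cannot be forced to have large multiplicity at $x$, while the nef and big divisor $aD-(K_X+\Delta)$ is a single fixed class whose coefficient can be scaled only up to $1$ inside the pair, hence contributes a bounded amount of multiplicity at $x$. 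So in general no such $\mathcal J$ exists, and the Angehrn--Siu style ``cut down to a point'' strategy breaks down precisely in the regime this theorem addresses. The paper avoids this: following \cite[Theorem~3.3]{KM}, the tie-breaking is performed on a log resolution $f:Y\to X$ using the fixed part $\sum r_jF_j$ of $f^*|mD|$ with $m=q^s$ a prime power, producing a \emph{divisorial} non-klt center $F$ on $Y$ contained in the fixed locus; one then restricts to the curve $F$, applies the non-vanishing theorem on $F$, and lifts sections using the weak Kawamata--Viehweg vanishing Theorem~\ref{kvvtype} (not Theorem~\ref{nadeltype}) for $b=q^l$, $l\gg 0$, contradicting the stability of $\Bs|q^sD|$.

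A second, smaller omission: the theorem assumes only $\llcorner \Delta\lrcorner=0$ and $K_X+\Delta$ $\mathbb{Q}$-Cartier, so $(X,\Delta)$ need not be klt, and your argument never treats the non-klt locus. This is where the paper actually uses Theorem~\ref{nadeltype}: from the sequence $0\to \mathcal J_{\Delta}(bD)\to \mathcal O_X(bD)\to \mathcal O_{M_{\Delta}}(bD)\to 0$, with $M_{\Delta}$ zero-dimensional, weak Nadel vanishing applied with $L=aD$ and $N=D$ shows that for $b\geq b_1$ the base locus $\Bs|bD|$ contains no non-klt points. This guarantees that every fixed component with $r_j>0$ has discrepancy $a_j>-1$, so the tie-breaking takes place entirely over the klt locus, and the effective exceptional divisor $G$ collecting the $a_j\leq -1$ part is disjoint from the chosen center $F$, which is exactly what makes the restriction sequence decompose as it does. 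Your Noetherian shrinking of the base locus is also vaguer than the prime-power bookkeeping the paper uses, but that part is repairable; the isolated-point multiplier ideal is not.
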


\begin{proof}
If the pair $(X, \Delta)$ is klt, then 
the proof of \cite[Theorem~3.3]{KM} works 
by the same modification as 
Theorem~\ref{nonvanishing}. 
Thus we assume that 
the pair $(X, \Delta)$ is not klt. 
Consider the following exact sequence:
$$0\to \mathcal J_{\Delta}(bD)
\to \mathcal O_X(bD)
\to \mathcal O_{M_{\Delta}}(bD)\to 0
$$
where $M_{\Delta}$ is the closed subscheme 
corresponding to $\mathcal J_{\Delta}$. 
Note that $\Supp M_{\Delta}$ 
consists of the non-klt points. 
In particular, the dimension of $M_{\Delta}$ 
is zero. 
We can apply Theorem~\ref{nadeltype} 
for $L:=aD$ and $N:=D$. 
Then we see that 
there exists a positive integer $b_1$ 
such that if $b\geq b_1$, then 
$H^0(X, bD)\neq 0$ and 
the base locus of $|bD|$ contains 
no non-klt points. 

The following argument 
is a slight modification of \cite[Theorem~3.3]{KM}. 
Fix an arbitrary prime number $q$. 
Let $s$ be a positive integer such that 
$$\Bs|q^sD|=\bigcap_{l\geq 1} \Bs|q^lD|.$$
Note that, since $X$ is a noetherian scheme,
we can find such an integer $s>0$. 
It is sufficient to prove that 
$\Bs|q^sD|=\emptyset.$ 
Suppose the contrary and we derive a contradiction. 
Set $m:=q^s.$ 
By the above argument, 
we see that $\Bs|mD|$ contains no non-klt points. 

Let $f:Y\to X$ be a log resolution 
of $(X, \Delta)$ such that 
\begin{enumerate}
\item{$K_Y=f^*(K_X+\Delta)+\sum a_jF_j$.}
\item{$f^*(aD-(K_X+\Delta))-\sum p_jF_j$ is ample, 
where $0<p_j\ll 1$.}
\item{$f^*|mD|=|L|+\sum r_jF_j$, where 
$|L|$ is basepoint free and 
$\bigcup F_j$ is the fixed locus of $f^*|mD|$.}
\end{enumerate}
Since $\Bs|mD|$ contains no non-klt points, for every $j$, 
the inequality $r_j>0$ implies $a_j>-1.$ 
We define the \Q-divisor $N(b, c)$ by 
\begin{eqnarray*}
N(b, c)&:=&bf^*D-K_Y+\sum(-cr_j+a_j-p_j)F_j\\
&=&(b-cm-a)f^*D\\
&+&c(mf^*D-\sum r_jF_j)\\
&+&f^*(aD-(K_X+\Delta))-\sum p_jF_j. 
\end{eqnarray*}
If $b\geq cm+a$, then 
$N(b, c)$ is ample. 
Thus, for $b\gg 0$, we have 
$$H^1(Y, K_Y+\ulcorner N(b, c)\urcorner)=0$$
by Theorem~\ref{kvvtype}. 

By a small perturbation of $p_j$, 
we can find $c>0$ and a prime divisor $F$ 
in the fixed locus of $f^*|mD|$, which 
satisfy the following property: 
$\sum_{a_j> -1}(-cr_j+a_j-p_j)F_j=:A-F$
where $\ulcorner A\urcorner$ is effective and 
$F$ is not a prime component of $A$. 
Note that we can find such a number $c$ 
because the inequality $r_j>0$ implies $a_j>-1.$ 
Set 
$$G:=-\ulcorner \sum_{a_j\leq -1}(-cr_j+a_j-p_j)F_j\urcorner.$$ 
Note that $G$ is an effective $f$-exceptional \Z-divisor 
and $f(G)$ consists of non-klt points. 
This means $\Supp G \cap \Supp F=\emptyset$ 
because $\Bs|mD|$ contains no non-klt points. 
Then we have 
$$K_Y+\ulcorner N(b, c)\urcorner=bf^*D+
\ulcorner A\urcorner-(F+G).$$ 
Consider the exact sequence: 
\begin{eqnarray*}
0&\to& \mathcal O_X(K_Y+\ulcorner N(b, c)\urcorner)\\
&\to&\mathcal O_X(bf^*D+
\ulcorner A\urcorner)\\ 
&\to&\mathcal O_{F+G}(bf^*D+
\ulcorner A\urcorner)\to 0
\end{eqnarray*}
If $b\gg 0$, then 
$H^1$ of the first term 
$\mathcal O_X(K_Y+\ulcorner N(b, c)\urcorner)$ 
vanishes. 
Let us consider the third term 
$\mathcal O_{F+G}(bf^*D+
\ulcorner A\urcorner)$. 
Since $F$ is disjoint from $G$, 
we have 
$$\mathcal O_{F+G}(bf^*D+
\ulcorner A\urcorner)=
\mathcal O_{F}(bf^*D+
\ulcorner A\urcorner)\oplus \mathcal O_{G}(bf^*D+
\ulcorner A\urcorner).$$
and 
$$\mathcal O_{F}(bf^*D+\ulcorner A\urcorner)
=\mathcal O_{F}(K_F+\ulcorner N(b, c)\urcorner).$$
$H^0$ of this sheaf does not vanish 
by the non-vanishing theorem for curves. 
Then we see $f(F)\not\subset\Bs|bD|$. 
Let $b:=q^l$ for $l\gg 0$. 
Then this is a contradiction. 
\end{proof}

\begin{cor}
Let $X$ be a projective normal surface and 
let $\Delta$ be a \Q-divisor such that 
$\llcorner \Delta\lrcorner=0$ and 
$K_X+\Delta$ is \Q-Cartier. 
If $K_X+\Delta$ is nef and big, then 
$K_X+\Delta$ is semi-ample. 
\end{cor}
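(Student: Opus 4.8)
The plan is to deduce this directly from the Basepoint free theorem (Theorem~\ref{basepointfree}), the point being that the nef Cartier divisor $D$ appearing there may be taken to be $K_X+\Delta$ itself, after clearing denominators. Concretely, I would first invoke the \Q-Cartier hypothesis to choose a positive integer $r$ with $r(K_X+\Delta)$ Cartier, and set $D:=r(K_X+\Delta)$. The auxiliary positivity condition required by Theorem~\ref{basepointfree} should then be essentially automatic, precisely because $D$ is a multiple of the nef and big divisor $K_X+\Delta$.

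The second step is to check the three hypotheses of Theorem~\ref{basepointfree} for this $D$. Since $K_X+\Delta$ is nef, $D$ is a nef Cartier divisor. Since $K_X+\Delta$ is big, so is $D$; on a projective surface a nef and big divisor has positive self-intersection, so $D^2>0$ and hence $\nu(X,D)=2\geq 1$. For the last hypothesis I would take $a:=2$, so that $aD-(K_X+\Delta)=(2r-1)(K_X+\Delta)$ with $2r-1\geq 1$; this is a positive multiple of a nef and big divisor, hence nef and big. Thus all hypotheses of Theorem~\ref{basepointfree} are met.

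Applying Theorem~\ref{basepointfree} then yields a positive integer $b_0$ such that $|bD|$ is basepoint free for every $b\geq b_0$. Since $bD=(br)(K_X+\Delta)$ is a Cartier multiple of $K_X+\Delta$, taking any single $b\geq b_0$ exhibits $K_X+\Delta$ as semi-ample, which is the desired conclusion. I do not expect a genuine obstacle here: all the substance lives in Theorem~\ref{basepointfree}, and the corollary is merely the specialization $D=r(K_X+\Delta)$, for which the condition ``$aD-(K_X+\Delta)$ nef and big'' comes for free. The only points requiring a word of care are the reduction to a Cartier divisor via the \Q-Cartier hypothesis and the elementary fact that a nef and big divisor on a surface satisfies $\nu(X,D)\geq 1$.
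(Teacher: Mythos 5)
Your proposal is correct and is essentially identical to the paper's own proof: the paper likewise sets $D:=c(K_X+\Delta)$ for $c$ clearing denominators and applies Theorem~\ref{basepointfree} with $a:=2$. Your additional verifications (that nef and big forces $D^2>0$ and hence $\nu(X,D)\geq 1$, and that $2D-(K_X+\Delta)=(2c-1)(K_X+\Delta)$ is nef and big) are exactly the routine checks the paper leaves implicit.
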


\begin{proof}
Let $c$ be a positive integer 
such that $c(K_X+\Delta)$ is Cartier. 
Then we can apply Theorem~\ref{basepointfree} 
for $D:=c(K_X+\Delta)$ and $a:=2$. 
Thus $|bc(K_X+\Delta)|$ is basepoint free 
for $b\gg 0$.
\end{proof}

We would like to know 
whether the above basepoint free theorem 
holds for the case where $D\equiv 0$. 
We give the affirmative answer  
only for the case where 
$X$ has at worst rational singularities. 
But our strategy is not the X-method. 
Let us recall the following known fact. 

\begin{fact}
Let $X$ be a normal surface and 
let $\Delta$ be an effective \R-divisor. 
\begin{enumerate}
\item{If $(X, \Delta)$ is klt, then 
$X$ has at worst rational singularities.}
\item{If $X$ has at worst rational singularities, 
then $X$ is \Q-factorial. }
\end{enumerate}
\end{fact}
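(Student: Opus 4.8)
The plan is to prove both parts through the minimal (or a log) resolution of $X$, using the relative Kawamata--Viehweg vanishing of Corollary~\ref{relativekvv} for the rationality statement and the negative-definiteness of the exceptional intersection matrix for \Q-factoriality. For part (1), rationality of $X$ means $R^1\mu_*\mathcal O_{X'}=0$ for a resolution $\mu:X'\to X$; the higher direct images $R^i\mu_*\mathcal O_{X'}$ with $i\geq 2$ vanish automatically because the fibres of $\mu$ are at most one-dimensional, and the condition is independent of the chosen resolution. I would take a log resolution $\mu:X'\to X$ of $(X,\Delta)$ and write $K_{X'}+\mu_*^{-1}\Delta=\mu^*(K_X+\Delta)+\sum_i a_iE_i$ over the exceptional divisors $E_i$, where $a_i>-1$ by the klt hypothesis and the coefficients of $\mu_*^{-1}\Delta$ lie in $[0,1)$. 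The goal is then to exhibit $\mathcal O_{X'}$ in the form $\mathcal O_{X'}(K_{X'}+\ulcorner B\urcorner)$ for a $\mu$-nef and $\mu$-big \R-divisor $B$ whose fractional part is simple normal crossing, and to invoke Corollary~\ref{relativekvv} (with base $S=X$ and $\dim\mu(X')=2\geq 1$) to conclude $R^1\mu_*\mathcal O_{X'}=0$.

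The heart of the construction uses the identity $-K_{X'}=-\mu^*(K_X+\Delta)+\mu_*^{-1}\Delta-\sum_i a_iE_i$ together with the negative-definiteness of the exceptional intersection matrix $(E_i\cdot E_j)$. By negative-definiteness there is a small effective exceptional \Q-divisor whose negative is $\mu$-ample; I would absorb it, along with the fractional contributions coming from $\mu_*^{-1}\Delta$ and from the $a_i$, into an effective \R-divisor $\Gamma$ with coefficients in $[0,1)$ and simple normal crossing support, so that $B:=-K_{X'}-\Gamma$ satisfies $\ulcorner B\urcorner=-K_{X'}$ (because $-K_{X'}$ is integral and the removed part has coefficients in $[0,1)$) while being $\mu$-nef and $\mu$-big. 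The bigness is forced to come entirely from the exceptional adjustment, since $-\mu^*(K_X+\Delta)$ is only $\mu$-numerically trivial, and the klt bound $a_i>-1$ is exactly what permits the required coefficients to be fit into $[0,1)$.

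For part (2) I would again use the minimal resolution $\mu:X'\to X$, with exceptional curves $E_1,\dots,E_n$. Given a Weil divisor $D$ on $X$ with strict transform $\tilde D$, the negative-definiteness, hence invertibility, of $(E_i\cdot E_j)$ lets me solve the linear system $(\tilde D+\sum_i c_iE_i)\cdot E_j=0$ for all $j$ with a unique rational solution, producing a \Q-divisor $L:=\tilde D+\sum_i c_iE_i$ that is $\mu$-numerically trivial. It then remains to upgrade numerical triviality to the statement that a multiple of $L$ is the pullback of a \Q-Cartier divisor on $X$; this is precisely where rationality enters, since $R^1\mu_*\mathcal O_{X'}=0$ forces numerical and linear equivalence of line bundles along the exceptional fibres to agree up to \Z-torsion. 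Pushing $L$ forward, some multiple of $D$ becomes Cartier, so $X$ is \Q-factorial.

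The step I expect to be the main obstacle is the construction of $B$ in part (1). One must simultaneously meet three competing demands: that $B$ be genuinely $\mu$-big (which, as noted, must be arranged purely through the exceptional locus), that $\ulcorner B\urcorner$ be \emph{exactly} $-K_{X'}$ (which confines the adjusting coefficients to $[0,1)$), and that the fractional part be simple normal crossing. Reconciling these is what truly uses both the klt inequality $a_i>-1$ and a careful choice of resolution; in particular it is cleanest on the minimal resolution, where every $a_i\leq 0$ makes the coefficient bookkeeping transparent, provided one first checks that the minimal resolution of a klt surface is already a log resolution. By contrast, for part (2) the only substantive input beyond linear algebra is the passage from $\mu$-numerical to $\mu$-linear triviality, which is the cohomological content of rational singularities.
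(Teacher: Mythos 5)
The paper itself offers no argument here: it proves this Fact purely by citation, part (1) to \cite[Theorem~14.4 and Remark~14.5]{Tanaka} and part (2) to \cite[Proposition~17.1]{Lipman}. Your self-contained sketch is therefore necessarily a different route; in outline it is the standard characteristic-zero argument (rationality via relative Kawamata--Viehweg vanishing on a resolution, \Q-factoriality via Lipman's numerical-pullback argument), transported to characteristic $p$ through Corollary~\ref{relativekvv}. One correction in your favour first: you do not need to manufacture $\mu$-bigness from the exceptional locus. For a birational morphism the generic fibre is a point, so a $\mu$-numerically trivial divisor is automatically $\mu$-nef and $\mu$-big; indeed the paper itself applies Corollary~\ref{relativekvv} to the $\mu$-numerically trivial divisor $\mu^*(L-(K_X+\Delta))+r\mu^*N+\mu^*N'$ in the proofs of Theorems~\ref{nadeltype} and \ref{kltkvv}. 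So $B:=-\mu^*(K_X+\Delta)$, or $-\mu^*(K_X+\Delta)-\epsilon F$ with $-F$ $\mu$-ample if you want strict positivity, is unproblematic as far as positivity goes; your worry that ``the bigness must come entirely from the exceptional adjustment'' is a misreading of relative bigness, though your fix is harmless.

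The genuine gap is exactly the one you flagged and did not close. Forcing $\ulcorner B\urcorner=-K_{X'}$ means the subtracted divisor $\Gamma=\mu_*^{-1}\Delta-\sum_i a_iE_i+\epsilon F$ must have all coefficients in $[0,1)$, which requires $a_i\le 0$ for every exceptional curve, i.e., essentially that $\mu$ be the minimal resolution; and the assertion that the minimal resolution of a klt surface pair is a log resolution (exceptional curves smooth and rational, crossing each other and $\mu_*^{-1}\Delta$ simply normally) is not free in positive characteristic --- it belongs to the classification of klt surface singularities and is in substance what the cited \cite[Theorem~14.4]{Tanaka} supplies, so assuming it is close to circular. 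On a general log resolution, positive discrepancies $a_i>0$ give $\Gamma$ negative coefficients; then either $\ulcorner B\urcorner=-K_{X'}+G$ for some nonzero effective exceptional $G$, in which case Corollary~\ref{relativekvv} only yields $R^1\mu_*\mathcal O_{X'}(G)=0$ rather than $R^1\mu_*\mathcal O_{X'}=0$, or you must re-choose $\Gamma$, and the existence of a $[0,1)$-coefficient $\Gamma$ with $-K_{X'}-\Gamma$ $\mu$-nef is no longer bookkeeping but a nontrivial statement about the singularity. So your proof of (1) is incomplete precisely at its declared crux. For (2), the outline is Lipman's and is sound, but the decisive step --- that $R^1\mu_*\mathcal O_{X'}=0$ upgrades the $\mu$-numerically trivial integral divisor $nL$ to a $\mu$-linearly trivial one (triviality on every infinitesimal neighbourhood of the exceptional fibre via the $H^1$-vanishing, then descent by the theorem on formal functions) --- is asserted rather than argued; note also that no torsion intervenes: for a rational surface singularity the relative Picard group of the exceptional fibre is free, detected by multidegrees, and the multiple of $D$ is needed only to clear the denominators of the coefficients $c_i$.
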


\begin{proof}
(1) See, for example, \cite[Theorem~14.4 and 
Remark~14.5]{Tanaka}. 

(2) See \cite[Proposition~17.1]{Lipman}.
\end{proof}

The following result is the key.

\begin{thm}\label{weakdelpezzo}
Let $X$ be a projective surface whose singularities are 
at worst rational. 
Let $\Delta$ be an \R-Weil divisor such that $\llcorner \Delta\lrcorner=0$. 
If $-(K_X+\Delta)$ is nef and big, then 
$X$ is a rational surface. 
\end{thm}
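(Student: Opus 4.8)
\section*{Proposal}

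The plan is to reduce the statement to a rationality criterion on a smooth resolution and then verify that criterion using the positivity of $-(K_X+\Delta)$. First I would take a minimal resolution $f\colon Y\to X$, which exists for surfaces in positive characteristic (cf.\ \cite{Lipman2}), and set $\Delta_Y:=f^*(K_X+\Delta)-K_Y$, so that $-(K_Y+\Delta_Y)=f^*(-(K_X+\Delta))$ is nef and big. The hypothesis $\llcorner \Delta\lrcorner=0$ forces $\Delta\geq 0$, hence $f^*\Delta\geq 0$; and for the minimal resolution the divisor $f^*K_X-K_Y$ is effective. Therefore $\Delta_Y=(f^*K_X-K_Y)+f^*\Delta\geq 0$, and so $-K_Y=f^*(-(K_X+\Delta))+\Delta_Y$ is big. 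Since $Y$ and $X$ are birational, it suffices to prove that $Y$ is rational.

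Next I would establish the two numerical inputs to Castelnuovo's rationality criterion, namely $P_2(Y)=h^0(Y,2K_Y)=0$ and $q(Y)=h^1(Y,\mathcal O_Y)=0$. For the plurigenera: if $mK_Y$ were linearly equivalent to an effective divisor $E$ for some $m>0$, then, writing $A:=f^*(-(K_X+\Delta))$, nefness gives $0\leq E\cdot A=m\,K_Y\cdot A$; but by the projection formula $K_Y\cdot A=K_X\cdot(-(K_X+\Delta))=-(K_X+\Delta)^2+(K_X+\Delta)\cdot\Delta<0$, because $(K_X+\Delta)^2>0$ and $(-(K_X+\Delta))\cdot\Delta\geq 0$. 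This contradiction shows $\kappa(Y)=-\infty$, whence $p_g(Y)=P_2(Y)=0$. For the irregularity I would use that $X$ has rational singularities: then $R^1f_*\mathcal O_Y=0$ and $f_*\mathcal O_Y=\mathcal O_X$, so the Leray spectral sequence yields $q(Y)=h^1(X,\mathcal O_X)$. Thus the whole problem reduces to proving $q(Y)=0$.

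The hard part will be this vanishing $q(Y)=0$, and it is precisely here that the X-method (the vanishing theorems of Section~2) is unavailable, since those require an auxiliary nef, non--numerically--trivial divisor to be added with large multiplicity. Instead I would invoke the Enriques--Bombieri--Mumford classification of surfaces, valid in positive characteristic: as $\kappa(Y)=-\infty$, the surface $Y$ is ruled over a smooth curve $C$ with $q(Y)=g(C)$, so it remains to exclude $g(C)\geq 1$. Nefness of $-(K_Y+\Delta_Y)$, not merely bigness of $-K_Y$, is essential at this point: there exist irrational ruled surfaces on which $-K_Y$ is big, but on each of them $-K_Y$ (and a fortiori $-(K_Y+\Delta_Y)$ for $\Delta_Y\geq 0$) is negative on the negative section, hence not nef. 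Concretely, assuming $g(C)\geq 1$, I would take the Albanese fibration $\alpha\colon Y\to C$ with general fibre $F\cong\mathbb P^1$ and test $A$ against sections and multisections: for an irreducible $d$-section $\Sigma$, adjunction together with the Hurwitz bound $2p_a(\Sigma)-2\geq d(2g(C)-2)$ and the inequality $A\cdot\Sigma\geq 0$ should constrain the fibration so severely as to be incompatible with $A^2>0$, and I expect the contradiction to emerge from this incompatibility. This exclusion of a positive--genus base is the step I regard as the main obstacle.

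Finally, granting $q(Y)=P_2(Y)=0$, Castelnuovo's criterion (valid in every characteristic) shows that $Y$ is rational, and therefore so is $X$. In writing the argument out carefully I would pay particular attention to the characteristic--$p$ subtleties of the Albanese fibration, namely that the general fibre of the ruling is a smooth rational curve and that $q(Y)$ indeed equals the genus of the base $C$.
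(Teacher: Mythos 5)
Your reduction to Castelnuovo's criterion is fine as far as it goes: the computation $K_Y\cdot f^*(-(K_X+\Delta))<0$ correctly gives $\kappa(Y)=-\infty$, hence $P_2(Y)=0$, and rationality of the singularities correctly gives $q(Y)=h^1(X,\mathcal O_X)$, so everything hinges on excluding an irrational base. But that exclusion --- which you yourself flag as ``the main obstacle'' and only ``expect'' to fall out of a multisection computation --- is the entire mathematical content of the theorem, and the intermediate statement you reduce to is in fact \emph{false}. You retain from the hypotheses only that $\Delta_Y\geq 0$, and claim that on an irrational ruled surface $-(K_Y+\Delta_Y)$ can never be nef and big. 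Counterexample: let $C$ be an elliptic curve, $Y=\mathbb P(\mathcal O_C\oplus\mathcal L)$ with $\deg\mathcal L=-1$ (so $e=1$), $C_0$ the negative section, $f$ a fiber. Then $K_Y\equiv -2C_0-f$, so for $\Delta_Y:=C_0\geq 0$ one has $-(K_Y+\Delta_Y)\equiv C_0+f$, which satisfies $(C_0+f)\cdot C_0=0$, $(C_0+f)\cdot f=1$, $(C_0+f)^2=1$: nef and big on an irrational ruled surface. This also refutes your ``a fortiori'' sentence: adding an effective $\Delta_Y$ does \emph{not} preserve negativity on the negative section when $C_0\subset\Supp\Delta_Y$, since $-(K_Y+C_0)\cdot C_0=-K_Y\cdot C_0-C_0^2=-1+1=0$. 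What saves the actual theorem is exactly the structure you discarded: since $\llcorner\Delta\lrcorner=0$, every \emph{horizontal} component of $\Delta_Y$ (a strict transform) has coefficient $<1$, while the components that can have coefficient $\geq 1$ are exceptional over the rational singularities of $X$, hence are rational curves and therefore \emph{vertical} over an irrational base. Without tracking this, your Hurwitz-plus-adjunction scheme cannot close; with it, it might, but that is genuinely additional work you have not done.

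For comparison, the paper avoids the Castelnuovo/Albanese route altogether and never confronts this fiberwise analysis. It first contracts, using the contraction theorem of [Tanaka, Theorems~6.2 and 20.4], every curve $E$ with $E^2<0$ (possible because $(K_X+E)\cdot E<(K_X+\Delta)\cdot E\leq 0$), preserving rational singularities and the nef-and-bigness of $-(K_X+\Delta)$; it then uses the minimal resolution only to produce, exactly as in your Step~2, a fibration $\pi:X\to Z$ onto an irrational curve (the exceptional curves are rational, hence contracted by the ruling). Finally, with no negative curves left, the cone theorem [Tanaka, Theorem~6.8] forces $\rho(X)\leq 2$, the fibration forces $\rho(X)=2$, the Hodge index argument makes $-(K_X+\Delta)$ ample, and the two resulting extremal rays both yield Mori fiber spaces onto curves --- contradicting the irrationality of $Z$. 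So the paper's substitute for your missing step is the surface MMP machinery of [Tanaka], not an intersection-theoretic analysis of multisections.
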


\begin{proof}
\setcounter{step}{0}
\begin{step}
In this step, we show that we may assume that 
$X$ has no curve whose self-intersection number is negative. 

Suppose the contrary, that is, 
there exists a curve $E$ in $X$ such that $E^2<0$. 
Since 
$$(K_X+E)\cdot E<(K_X+\Delta)\cdot E\leq 0,$$
we obtain a birational morphism $f:X\to Y$ 
to a projective surface whose singularities are 
at worst rational such that $\Ex f=E$. 
This follows from \cite[Theorem~6.2 and Theorem~20.4]{Tanaka}. 
Set $\Delta_Y:=f_*\Delta.$ 
We see that the discrepancy $d$, defined by 
$$K_X+\Delta=f^*(K_Y+\Delta_Y)+dE,$$
is non-negative. 
Then we can see that $-(K_Y+\Delta_Y)$ is nef and big. 
Moreover, if there exists a curve $E_Y$ in $Y$ 
such that $E_Y^2<0$, then 
we can repeat the same procedure as above. 
\end{step}

\begin{step}
In this step, we prove that we may assume that 
there exists a surjective morphism $\pi:X\to Z$ 
to a smooth projective irrational curve $Z$. 

Let $g:X'\to X$ be the minimal resolution and 
set $K_{X'}+\Delta':=g^*(K_X+\Delta)$. 
Since $-(K_X+\Delta)$ is big, 
the anti-canonical divisor 
$$-K_{X'}=-g^*(K_X+\Delta)+\Delta'$$
is also big. 
In particular, $X'$ is a ruled surface. 
If $X'$ is rational, then there is nothing to prove. 
Thus we may assume that $X'$ is an irrational ruled surface. 
Let $\theta:X'\to Z$ be its ruling. 
Because the singularities of $X$ are at worst rational, 
each curve $D$ in $\Ex (g)$ is a smooth rational curve. 
In particular, $\theta(D)$ is one point. 
This means that $\theta$ factors through $X$. 
This is what we want to show. 
\end{step}

\begin{step}
By Step~1 and \cite[Theorem~6.8]{Tanaka}, 
we see that $\rho(X)\leq 2$. 
Moreover, by Step~2, we see that $\rho(X)=2.$ 
By Step~1, we see that $-(K_X+\Delta)$ is ample 
because, for a curve $C$ in $X$, 
the equality $(K_X+\Delta)\cdot C=0$ means $C^2<0$ by Kodaira's lemma. 
Thus there are two extremal rays which induce 
the Mori fiber space to a curve by 
\cite[Theorem~6.8]{Tanaka}. 
But this contradicts $\pi:X\to Z$ and 
the irrationality of $Z$.
\end{step}
\end{proof}

In the case where $D\equiv 0$, 
the basepoint free theorem is related to 
the rationality of the log weak del Pezzo surfaces. 
Indeed, by using the above result, 
we prove the following basepoint free theorem. 

\begin{cor}[Basepoint free theorem 
in the case where $\nu=0$] 
\label{basepointfree0}
Let $X$ be a projective surface 
whose singularities are at worst rational. 
Let $\Delta$ be an \R-Weil divisor such that $\llcorner \Delta\lrcorner=0$. 
Let $D$ be a numerically trivial Cartier divisor. 
If $-(K_X+\Delta)$ is nef and big, then $D\sim 0$. 
\end{cor}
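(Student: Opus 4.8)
The plan is to reduce the statement on the possibly singular surface $X$ to the analogous statement on a smooth resolution, where the structure of the Picard group is completely understood. The crucial input is Theorem~\ref{weakdelpezzo}: since $X$ has at worst rational singularities, $\llcorner\Delta\lrcorner=0$, and $-(K_X+\Delta)$ is nef and big, that theorem forces $X$ to be a rational surface. This rationality is the entire substance of the argument; everything that follows is standard.

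First I would take a resolution of singularities $g:X'\to X$ (for instance the minimal resolution, available for surfaces in positive characteristic by \cite{Lipman2}). Since $X$ is rational and $g$ is birational, $X'$ is a smooth projective rational surface. As $D$ is Cartier, its pullback $g^*D$ is a well-defined Cartier divisor on $X'$. I would then check that $g^*D$ is numerically trivial on $X'$: for any curve $C'$ on $X'$ the projection formula gives $g^*D\cdot C'=D\cdot g_*C'$, which vanishes because $D\equiv 0$ on $X$ (regardless of whether $C'$ is contracted by $g$). Hence $g^*D\equiv 0$.

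The next step uses the special geometry of smooth rational surfaces. On such a surface one has $h^1(\mathcal O_{X'})=0$, so $\mathrm{Pic}^0(X')=0$, and the Picard group is a finitely generated torsion-free abelian group; consequently numerical equivalence coincides with linear equivalence. Applying this to $g^*D$ yields $g^*D\sim 0$, that is, $\mathcal O_{X'}(g^*D)\cong\mathcal O_{X'}$.

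Finally I would descend back to $X$. Since $X$ is normal we have $g_*\mathcal O_{X'}=\mathcal O_X$, so the projection formula gives $\mathcal O_X(D)\cong g_*\big(\mathcal O_{X'}\otimes g^*\mathcal O_X(D)\big)=g_*\mathcal O_{X'}(g^*D)\cong g_*\mathcal O_{X'}=\mathcal O_X$, whence $D\sim 0$. The only genuinely nontrivial ingredient is the appeal to Theorem~\ref{weakdelpezzo} to guarantee rationality of $X$; once that is in hand, the remaining steps — stability of numerical triviality under pullback, the coincidence of numerical and linear equivalence on a smooth rational surface, and the pushforward via $g_*\mathcal O_{X'}=\mathcal O_X$ — are routine.
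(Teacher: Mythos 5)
Your proposal is correct and follows essentially the same route as the paper: the paper's proof also passes to a resolution $f:X'\to X$, invokes Theorem~\ref{weakdelpezzo} to obtain rationality, uses that a numerically trivial divisor on a smooth rational surface is linearly trivial, and descends to $X$ via $H^0(X,D)=H^0(X',f^*D)$. Your projection-formula descent $g_*\mathcal O_{X'}(g^*D)\cong\mathcal O_X(D)$ is just a slightly more explicit version of that last step.
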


\begin{proof}
Let $f:X'\to X$ be a resolution and 
set $D':=f^*D.$ 
Since $H^0(X, D)=H^0(X', D')$, 
it is sufficient to prove $D'\sim 0$. 
By Theorem~\ref{weakdelpezzo}, 
$X'$ is rational. 
Therefore $D'\equiv 0$ means $D'\sim 0$. 
\end{proof}

\begin{rem}
In \cite{Tanaka}, 
a basepoint free theorem is established 
in the case where $X$ is a \Q-factorial surface 
(Theorem~\ref{0tanakabasepointfree}). 
But this result does not contain 
Corollary~\ref{basepointfree0}. 
On the other hand, 
a cone theorem is established 
under the assumption 
that 
$X$ is a normal surface and 
$\Delta$ is an effective \R-divisor such that 
$K_X+\Delta$ is \R-Cartier. 
For more details, see \cite{Tanaka}. 
\end{rem}

\section{Other vanishing results}

In this section, we establish some vanishing results 
other than the ones in Section~2. 
Theorem~\ref{hn-1vanish} and Theorem~\ref{fiberrelativekvv}
are the main results in this section. 
Theorem~\ref{hn-1vanish} follows from 
a fundamental inductive argument. 
Theorem~\ref{fiberrelativekvv} follows from the same argument as Section~2. 

First, we consider a generalization 
of Proposition~\ref{h2vanishing}.

\begin{prop}\label{hnvanish}
Let $X$ be an $n$-dimensional 
smooth projective variety with $n\geq 1$. 
Let $F$ be a coherent sheaf on $X$. 
Let $N$ be a nef \R-divisor with $\nu(X, N)\geq 1$. 
Then there exists a positive real number $r(F, N)$ 
such that 
$$H^n(X, F(rN+N'))=0$$
for every positive real number $r\geq r(F, N)$ and 
for every nef \R-divisor $N'$ such that 
$rN+N'$ is a \Z-divisor. 
\end{prop}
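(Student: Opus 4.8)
The plan is to follow the proof of Proposition~\ref{h2vanishing} almost verbatim, replacing the single ample divisor $A'$ used there by the top intersection class $A'^{n-1}$ and strengthening the numerical input accordingly.

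First I would reduce to the case where $F$ is an invertible sheaf. Choosing a sufficiently ample \Z-divisor $A$ so that $F(A)$ is globally generated yields a surjection $\mathcal O_X(-A)^{\oplus s}\twoheadrightarrow F$; twisting by $\mathcal O_X(rN+N')$ gives a surjection $\mathcal O_X(-A+rN+N')^{\oplus s}\twoheadrightarrow F(rN+N')$. Since $\dim X=n$, Grothendieck vanishing kills $H^{n+1}$ of the kernel, so the induced map on $H^n$ is surjective; hence it suffices to prove the vanishing for the invertible sheaf $F=:L$ (in fact for any line bundle).

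Next I would apply Serre duality to obtain $h^n(X,L+rN+N')=h^0(X,K_X-L-rN-N')$, fix an ample \Z-divisor $A'$, and estimate the intersection number $(K_X-L-rN-N')\cdot A'^{n-1}$. Because $N'$ is nef we have $N'\cdot A'^{n-1}\geq 0$, so this number is at most $(K_X-L)\cdot A'^{n-1}-r\,(N\cdot A'^{n-1})$. Provided $N\cdot A'^{n-1}>0$, this becomes negative once $r$ is large, uniformly in $N'$; since a \Z-divisor of negative degree against the nef class $A'^{n-1}$ cannot be linearly equivalent to an effective divisor, the target $H^0$ vanishes, yielding the bound $r(F,N)$, which depends on $F$ only through the choice of $A$.

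The main obstacle is the positivity $N\cdot A'^{n-1}>0$. In the surface case this was the immediate consequence $N\cdot A'>0$ of $\nu(X,N)\geq 1$, but for $n>2$ it requires an extra argument: I must show that a nef \R-divisor $N$ with $N\cdot A'^{n-1}=0$ is numerically trivial, so that $\nu(X,N)\geq 1$ forces strict positivity. I would prove this via the Hodge index theorem for divisors (valid in any characteristic) applied to the quadratic form $Q(\alpha,\beta):=\alpha\cdot\beta\cdot A'^{n-2}$ on $N^1(X)_{\mathbb{R}}$: this form has signature $(1,\rho-1)$ with $A'$ in the positive cone, the equality $N\cdot A'^{n-1}=0$ places $N$ in the orthogonal complement $A'^{\perp}$ on which $Q$ is negative definite, while nefness of $N$ gives $Q(N,N)=N^2\cdot A'^{n-2}\geq 0$; together these force $N\equiv 0$. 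With this lemma the remaining estimates are routine.
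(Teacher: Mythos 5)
Your proposal is correct, but it takes a genuinely different route from the paper. The paper performs the same reduction to an invertible sheaf and then argues by induction on $n$: it picks a smooth very ample hyperplane section $H$, uses the exact sequence relating $L+rN+N'$, $L+rN+N'+H$ and the restriction to $H$, kills $H^{n}(X, L+rN+N'+H)$ by the Fujita vanishing theorem (Fact~\ref{Zfujitavanishing}) after replacing $H$ by a large multiple, and kills $H^{n-1}$ of the restriction by the inductive hypothesis, the base case being (essentially) Proposition~\ref{h2vanishing}. You instead run the surface argument of Proposition~\ref{h2vanishing} directly in top degree: Serre duality plus the degree estimate against $A'^{n-1}$, the one new ingredient being the positivity $N\cdot A'^{n-1}>0$. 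That positivity is exactly Kleiman's numerical triviality criterion (a nef divisor $N$ satisfies $N\equiv 0$ if and only if $N\cdot A'^{n-1}=0$ for one, equivalently every, ample $A'$), which is valid over any algebraically closed field; your signature argument via the form $Q(\alpha,\beta)=\alpha\cdot\beta\cdot A'^{n-2}$ is a correct proof of it, granting the higher-dimensional Hodge index theorem in positive characteristic (due to Kleiman, reducing to the characteristic-free surface case), and you could equally well just quote the criterion. What each approach buys: yours is shorter, avoids invoking Bertini to produce smooth hyperplane sections in characteristic $p$, and makes the uniformity in $N'$ and the dependence of $r(F,N)$ on the data completely explicit; moreover, the paper's induction tacitly needs $\nu(H, N|_H)\geq 1$ to apply the inductive hypothesis on $H$, and for $H\in |mA'|$ this is precisely the positivity $N\cdot A'^{n-1}>0$ you prove, so your lemma is in effect hidden in the paper's argument as well. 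Conversely, the paper's route gets by with only the surface Hodge index theorem (used inside Proposition~\ref{h2vanishing}) rather than its higher-dimensional generalization.
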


\begin{proof}
By the same argument as Theorem~\ref{h2vanishing}, 
we may assume that $F=:L$ is an invertible sheaf. 
We prove the assertion by the induction on $n=\dim X$. 
If $n=1$, then the assertion is obvious. 
Thus, we assume $n>1.$ 
Let $H$ be a smooth hyperplane section. 
Consider the exact sequence: 
\begin{eqnarray*}
0&\to& \mathcal O_X(L+rN+N')\\
&\to& \mathcal O_X(L+rN+N'+H)\\
&\to& \mathcal O_H(L+rN+N'+H)\to 0
\end{eqnarray*}
By the hypothesis of the induction, 
$H^{n-1}(H, \mathcal O_H(L+rN+N'+H))$ vanishes. 
For the vanishing of 
$H^{n}(X, \mathcal O_X(L+rN+N'+H))$, 
replacing $H$ by a large multiple, 
we can apply the Fujita vanishing theorem. 
This is what we want to show. 
\end{proof}

Second, we consider a generalization 
of the Kawamata--Viehweg type vanishing theorem 
(Theorem~\ref{kvvtype}). 

\begin{thm}\label{hn-1vanish}
Let $X$ be an $n$-dimensional 
smooth projective variety with $n\geq 2$. 
Let $B$ be a nef and big \R-divisor whose 
fractional part is simple normal crossing. 
Let $N$ be a nef \R-divisor with $\nu(X, N)\geq 1$.
Then there exists a positive real number $r(B, N)$ 
such that 
$$H^{n-1}(X, K_X+\ulcorner B\urcorner+rN+N')=0$$
for every positive integer $r\geq r(B, N)$ and 
for every nef \R-divisor $N'$ such that 
$rN+N'$ is a \Z-divisor.
\end{thm}

\begin{proof}
If $n=2$, then we obtain the assertion by Theorem~\ref{kvvtype}. 
Then, the assertion follows from the same inductive argument 
as the proof of Theorem~\ref{hnvanish}.
\end{proof}

Next, 
let us recall the following known result. 

\begin{prop}\label{fiberserrevanish}
Let $\pi:X\to S$ be a morphism 
from a proper variety $X$ to a projective variety $S$. 
Let $A_S$ be an ample Cartier divisor on $S$ and 
let $N:=\pi^*A_S$. 
Let $F$ be a coherent sheaf on $X$. 
Set $f_{\max}:=\max_{s\in S} \dim \pi^{-1}(s)$. 

If $i\geq f_{\max}+1$, then 
$$H^i(X, F(mN))=0$$
for an arbitrary integer $m\gg 0$.
\end{prop}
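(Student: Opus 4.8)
The plan is to run the Leray spectral sequence for $\pi$ and to kill every contribution on the relevant diagonal using two standard inputs: Serre vanishing on the base $S$ (via the projection formula) and the vanishing of higher direct images above the maximal fibre dimension. First I would note that, since $X$ is proper and $S$ is projective, the morphism $\pi$ is proper, so each $R^q\pi_*F$ is a coherent sheaf on $S$. By the projection formula,
$$R^q\pi_*(F(mN))=R^q\pi_*(F\otimes\pi^*A_S^{\otimes m})\simeq (R^q\pi_*F)\otimes A_S^{\otimes m},$$
so the Leray spectral sequence takes the form
$$E_2^{p,q}:=H^p(S,(R^q\pi_*F)\otimes A_S^{\otimes m})\Rightarrow H^{p+q}(X,F(mN)).$$

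Next I would invoke the vanishing $R^q\pi_*F=0$ for every $q>f_{\max}$. This is the fibre-dimension bound for higher direct images: by the theorem on formal functions the completed stalk of $R^q\pi_*F$ at a point $s\in S$ is an inverse limit of cohomology groups living on infinitesimal thickenings of the fibre $\pi^{-1}(s)$, all of which share the underlying topological space of $\pi^{-1}(s)$; Grothendieck's vanishing theorem forces these to vanish for $q>\dim\pi^{-1}(s)$, hence for $q>f_{\max}$. I would cite this rather than reprove it.

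Finally I would fix $i\geq f_{\max}+1$ and examine the diagonal $p+q=i$ of the $E_2$-page. If $q\geq f_{\max}+1$, then $R^q\pi_*F=0$ by the previous paragraph, so $E_2^{p,q}=0$. If $q\leq f_{\max}$, then $p=i-q\geq 1$; since only finitely many indices $q$ in this range have $R^q\pi_*F\neq 0$, Serre's vanishing theorem (using that $A_S$ is ample on the projective surface-or-variety $S$) gives $H^p(S,(R^q\pi_*F)\otimes A_S^{\otimes m})=0$ for all $p>0$ once $m$ is taken large enough to work simultaneously for all these $q$. Thus every $E_2^{p,q}$ with $p+q=i$ vanishes for $m\gg 0$, and since the $E_\infty^{p,q}$ are subquotients of the $E_2^{p,q}$, the abutment $H^i(X,F(mN))$ vanishes as well.

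The only genuinely substantive ingredient is the higher-direct-image vanishing $R^q\pi_*F=0$ for $q>f_{\max}$; the remainder is formal bookkeeping with the spectral sequence together with Serre vanishing. Accordingly, the main point where I expect to lean on an external result, rather than on a direct computation, is the application of the theorem on formal functions and Grothendieck vanishing that yields this fibre-dimension bound.
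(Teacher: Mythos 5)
Your proof is correct and follows essentially the same route as the paper: the Leray spectral sequence combined with the projection formula $R^q\pi_*(F\otimes\pi^*A_S^{\otimes m})\simeq (R^q\pi_*F)\otimes A_S^{\otimes m}$, Serre vanishing on $S$ for $m\gg 0$, and the fibre-dimension bound $R^q\pi_*F=0$ for $q>f_{\max}$. The only (harmless) differences are cosmetic: the paper uses Serre vanishing to collapse the whole $E_2$-page to the $p=0$ column before invoking the fibre bound, whereas you check the single diagonal $p+q=i$, and you spell out the formal-functions/Grothendieck-vanishing justification that the paper leaves implicit in its final line.
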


\begin{proof}
Consider the Leray spectral sequence 
$$E^{i,j}_2:=H^i(S, R^j\pi_*F(mN))\Rightarrow H^{i+j}(X, F(mN))=:E^{i+j}.$$
Since 
$$R^j\pi_*F(mN)=R^j\pi_*(F\otimes \pi^*(mA_S))
=R^j\pi_*(F)\otimes A_S^{\otimes m},$$
by Serre vanishing, we have $E^{i,j}_2=0$ 
for $i>0$ and $m\gg 0$. 
Thus we obtain $E^{0,j}_2=E^{j}$ for $m\gg 0$. 
If $j\geq f_{\max}+1$, then $E^{0,j}_2=0$. 
\end{proof}

By the same argument as Section~2, 
we obtain the following vanishing result. 

\begin{thm}\label{fiberrelativekvv}
Let $\pi:X\to S$ be a morphism 
from a smooth projective variety $X$ to a projective variety $S$. 
Let $A$ be a $\pi$-ample \R-divisor on $X$ whose 
fractional part is simple normal crossing. 
Let $A_S$ be an ample Cartier divisor on $S$ and 
let $N:=\pi^*A_S$. 
Set $f_{\max}:=\max_{s\in S} \dim \pi^{-1}(s)$. 

\begin{enumerate}
\item{If $i\geq f_{\max}$, then 
$$H^i(X, 
\mathcal O_X(K_X+\ulcorner A\urcorner+mN))=0$$
for an arbitrary integer $m \gg 0$.}
\item{If $i\geq f_{\max}$, then 
$$R^i\pi_*\mathcal O_X(K_X+
\ulcorner A\urcorner)=0.$$}
\end{enumerate}
\end{thm}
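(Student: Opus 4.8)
The plan is to deduce (2) from (1) exactly as Corollary~\ref{relativekvv} is deduced from Theorem~\ref{kvvtype}, and to prove (1) by isolating the single boundary degree $i=f_{\max}$. For $i\ge f_{\max}+1$ part (1) is immediate from Proposition~\ref{fiberserrevanish} applied to the coherent sheaf $F:=\mathcal O_X(K_X+\ulcorner A\urcorner)$, so the whole content is the vanishing
$$H^{f_{\max}}(X,\,K_X+\ulcorner A\urcorner+mN)=0\qquad(m\gg 0).$$
Once this is in hand, the Leray spectral sequence $E_2^{a,b}=H^a(S,R^b\pi_*F\otimes A_S^{\otimes m})\Rightarrow H^{a+b}(X,F(mN))$ degenerates for $m\gg 0$ by Serre vanishing, giving $H^0(S,R^{f_{\max}}\pi_*F\otimes A_S^{\otimes m})=0$; since that sheaf is globally generated for $m\gg 0$, it vanishes, whence $R^{f_{\max}}\pi_*F=0$, while $R^i\pi_*F=0$ for $i>f_{\max}$ holds automatically by the fibre-dimension bound. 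This proves (2). I work under $f_{\max}\ge 1$, which is the meaningful range: if $\dim\pi(X)=0$ then $N=0$, $f_{\max}=\dim X$, and the required top vanishing $H^{\dim X}(X,K_X+\ulcorner A\urcorner)=0$ is Serre duality, since $\ulcorner A\urcorner$ is big.

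To prove the boundary vanishing I would first make $A$ genuinely ample and then clear the round-up. Because $A$ is $\pi$-ample and $N=\pi^*A_S$ with $A_S$ ample, the divisor $A+kN$ is ample on $X$ for $k\gg 0$; writing $\ulcorner A\urcorner+mN=\ulcorner A+kN\urcorner+(m-k)N$ reduces the problem to the same statement with the $\pi$-ample $A$ replaced by an ample $\mathbb R$-divisor $A':=A+kN$ (of the same, simple normal crossing, fractional part) and with $m-k\gg 0$. Next I would clear the round-up exactly as in Step~1 of Theorem~\ref{kvvtype}: after perturbing $A'$ to an ample $\mathbb Q$-divisor none of whose fractional denominators is divisible by $p$, Proposition~\ref{cycliccover} applied to $D:=A'+(m-k)N$, whose fractional part equals that of $A'$ and is thus independent of $m$, produces a finite Galois cover $\gamma:Y\to X$, independent of $m$, with
$$H^{f_{\max}}(X,\,K_X+\ulcorner A'\urcorner+(m-k)N)=H^{f_{\max}}(Y,\,K_Y+\gamma^*A'+(m-k)\gamma^*N)^G.$$
Here $\gamma^*A'$ is an ample $\mathbb Z$-divisor, $\gamma^*N=(\pi\circ\gamma)^*A_S$ is nef, and, since $\gamma$ is finite, the morphism $\pi\circ\gamma:Y\to S$ has the same maximal fibre dimension $f_{\max}$.

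The heart is a Frobenius computation generalizing Proposition~\ref{h1proto} from the top-but-one degree of a surface to the degree $f_{\max}$ on $Y$. Pushing forward the Frobenius map $e$ times gives
$$0\to\mathcal B_e\to F^e_*\omega_Y\to\omega_Y\to 0,$$
and twisting by $\mathcal O_Y(\gamma^*A'+(m-k)\gamma^*N)$ yields a long exact cohomology sequence in which the term $H^{f_{\max}}(Y,F^e_*\omega_Y(\cdots))=H^{f_{\max}}(Y,\omega_Y(p^e\gamma^*A'+p^e(m-k)\gamma^*N))$ vanishes for $e\gg 0$ by the Fujita vanishing theorem (Theorem~\ref{Rfujitavanishing}), because $\gamma^*A'$ is ample and $f_{\max}\ge 1$; crucially $e$ may be chosen independent of $m$. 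The neighbouring term $H^{f_{\max}+1}(Y,\mathcal B_e(\gamma^*A'+(m-k)\gamma^*N))$ vanishes for $m\gg 0$ by Proposition~\ref{fiberserrevanish} applied to the coherent sheaf $\mathcal B_e(\gamma^*A')$ on $Y$ and the morphism $\pi\circ\gamma$, since $f_{\max}+1$ strictly exceeds the maximal fibre dimension of $\pi\circ\gamma$. Squeezing $H^{f_{\max}}(Y,\omega_Y(\gamma^*A'+(m-k)\gamma^*N))$ between these two zeros forces it to vanish, and taking $G$-invariants completes the boundary vanishing.

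The step I expect to be the main obstacle is precisely this Frobenius squeeze at the boundary degree. In Section~2 the kernel term $\mathcal B_e$ was annihilated by the top-degree vanishing (Proposition~\ref{h2vanishing}, resp.\ Proposition~\ref{hnvanish}), which pinned the relevant index to $n-1$; here the corresponding role must instead be played by the fibrewise Serre vanishing of Proposition~\ref{fiberserrevanish}, and it is exactly this substitution that lets the argument reach the intermediate index $f_{\max}$ rather than only $n-1$. One must therefore verify carefully that $f_{\max}\ge 1$, so that both the Fujita input in degree $f_{\max}$ and the positivity of $\gamma^*A'$ survive, and that finiteness of $\gamma$ preserves $f_{\max}$, so that $f_{\max}+1$ genuinely lies above the relative dimension of $\pi\circ\gamma$; the degenerate finite case $f_{\max}=0$ falls outside this range and is not treated.
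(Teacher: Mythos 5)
Your proposal is correct and takes essentially the same route as the paper: the paper's proof likewise reduces the $\pi$-ample $A$ to an ample divisor, clears the round-up via Proposition~\ref{cycliccover}, reruns the Frobenius--Fujita squeeze of Proposition~\ref{h1proto} with Proposition~\ref{fiberserrevanish} substituted for Proposition~\ref{h2vanishing}, and deduces $(2)$ from $(1)$ by the usual Leray spectral sequence argument. You have merely made explicit the details the paper compresses, such as the independence of $e$ from $m$, the preservation of $f_{\max}$ under the finite cover, and the restriction to the range $f_{\max}\geq 1$ where the Frobenius argument applies.
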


\begin{proof}
By the usual spectral sequence argument, 
$(2)$ follows from $(1)$. 
Thus we only prove $(1)$. 
By the assumption, we may assume that 
$A$ is an ample \R-divisor whose fractional part is simple normal crossing. 
Moreover, by Proposition~\ref{cycliccover}, 
we may assume that $A$ is an ample \Z-divisor. 
Then, the assertion follows from 
the same arguments as Proposition~\ref{h1proto} 
by using Proposition~\ref{fiberserrevanish} 
instead of Proposition~\ref{h2vanishing}.  
\end{proof}

\section{Examples in dimension three}

It is natural to consider the following question. 

\begin{ques}
Can we generalize the vanishing results 
in Section~2 to higher dimensional varieties?
\end{ques}

Unfortunately, the answer is NO. 
In this section, we construct 
counter-examples.

\begin{ex}[cf. Proposition~\ref{h2vanishing}]
There exists a smooth projective 3-fold $X$, 
a coherent sheaf $F$ and 
a semi-ample and big \Z-divisor $B$ 
which satisfy the following property. 

There exists a positive integer $m_0$ 
such that for an arbitrary integer $m\geq m_0$ 
$$H^2(X, F(mB))\neq 0.$$
\end{ex}

\begin{proof}[Construction]
Let $X_0$ be an arbitrary smooth projective $3$-fold and 
let $x_0\in X_0$ be an arbitrary point. 
Let $f:X\to X_0$ be the blowup at $x_0$. 
Let $E$ be the exceptional divisor and 
let $B:=f^*A_0$ where $A_0$ is an ample \Z-divisor on $X_0$. 
We define $F$ by 
$$F:=\mathcal O_X(K_X+E).$$
Consider the exact sequence: 
$$0\to \mathcal O_X(K_X+mB)
\to \mathcal O_X(K_X+E+mB)
\to \mathcal O_E(K_E+mB)\to 0.$$
$H^3(X, \mathcal O_X(K_X+mB))$ 
vanishes 
for an arbitrary large integer $m\gg 0$ 
by Proposition~\ref{hnvanish}. 
Consider $H^2(E, \mathcal O_E(K_E+mB))$. 
Since $B=f^*(A)$ and $f(E)$ is one point, 
we have 
$$h^2(E, K_E+mB)=h^2(E, K_E)=h^0(E, \mathcal O_E)=1\neq 0$$
for an arbitrary integer $m\in \mathbb{Z}$. 
These mean 
$$H^2(X, \mathcal O_X(K_X+E+mB))\neq 0$$
for an arbitrary large integer $m\gg 0$. 
This is what we want to show. 
\end{proof}

In the construction of the following three examples, 
we use a counter-example to the Kodaira vanishing theorem 
(cf. \cite{Raynaud}).

\begin{ex}[cf. Theorem~\ref{h1proto}]
\label{cexnu=1}
There exists a smooth projective 3-fold $X$, 
an ample \Z-divisor $A$ and 
a semi-ample \Z-divisor $N$ with $\nu(X, N)=1$ 
which satisfy the following property. 

There exists a positive integer $m_0$ 
such that for an arbitrary integer $m\geq m_0$ 
$$H^1(X, K_X+A+mN)\neq 0.$$
\end{ex}

\begin{proof}[Construction]
Let $Z$ be a smooth projective surface and 
let $A_Z$ be an ample \Z-divisor such that 
$$H^1(Z, K_{Z}+A_Z)\neq 0.$$
Let $C$ be an arbitrary smooth projective curve. 
Set $X:=Z\times C$ and 
let $\pi_Z$ and $\pi_C$ be their projections respectively. 
Take two distinct points $c_0\in C$ and $c_1\in C$ and 
let $Z_0:=Z\times \{c_0\}$ and $Z_1:=Z\times \{c_1\}$. 
Note that $Z_0\simeq Z$ and $Z_1\simeq Z$. 
Since $c_0$ and $c_1$ are ample \Z-divisors on $C$, 
$$A:=\pi_Z^*(A_Z)+Z_0+Z_1\,\,\,{\rm and}\,\,\,A-Z_0$$
are ample \Z-divisors on $X$. 
We show 
$$H^1(X, K_X+A+mZ_1)\neq 0$$
for an arbitrary integer $m\gg 0$. 
Consider the exact sequence: 
\begin{eqnarray*}
0&\to& \mathcal O_X(K_X+A-Z_0+mZ_1)\\
&\to& \mathcal O_X(K_X+A+mZ_1)\\
&\to& \mathcal O_{Z_0}(K_{X}+A+mZ_1)\to 0.
\end{eqnarray*}
By Theorem~\ref{hn-1vanish}, 
$H^2(X, \mathcal O_X(K_X+A-Z_0+mZ_1))$ 
vanishes for an arbitrary large integer $m\gg 0.$ 
Let us calculate $H^1(Z_0, \mathcal O_{Z_0}(K_{X}+A+mZ_1))$. 
By $Z_0\cap Z_1=\emptyset$, we see 
$$H^1(Z_0, K_{X}+A+mZ_1)
=H^1(Z_0, K_{Z_0}+\pi_Z^*A_Z)
=H^1(Z, K_{Z}+A_Z)\neq 0$$
for an arbitrary integer $m\in \mathbb{Z}.$ 
These mean 
$$H^1(X, K_X+A+mZ_1)\neq 0$$
for $m\gg 0.$ 
This is what we want to show. 
\end{proof}

\begin{ex}[cf. Theorem~\ref{h1proto}]
\label{cexnu=3}
There exists a smooth projective 3-fold $X$, 
an ample \Z-divisor $A$ and 
a  semi-ample and big \Z-divisor $B$ 
which satisfy the following property. 

There exists a positive integer $m_0$ 
such that for an arbitrary integer $m\geq m_0$ 
$$H^1(X, K_X+A+mB)\neq 0.$$
\end{ex}

\begin{proof}[Construction]
By Proposition~\ref{cycliccover} and 
Step~1 in the proof of Theorem~\ref{kvvtype}, 
it is sufficient that we construct 
$$H^1(X, K_X+\Delta+A+mN)\neq 0$$ 
for an ample \Q-divisor $A$ and 
a simple normal crossing \Q-divisor $\Delta$ such that 
$0\leq \Delta\leq 1$ and that $\Delta+A$ is a \Z-divisor. 

Let $Z\subset \mathbb{P}^N$ be a smooth projective surface and 
let $A_Z$ be an ample \Z-divisor such that 
$$H^1(Z, K_{Z}+A_Z)\neq 0.$$ 
Let $Y\subset \mathbb{P}^{N+1}$ be the projective cone over $Z$ and 
let $f:X\to Y$ be the blowup of the vertex of $Y$. 

Then, by \cite[Chapter~V, Example~2.11.4]{Hartshorne}, 
we see that 
$X=\Proj_Z(\mathcal O_Z\oplus \mathcal O_Z(1)).$ 
Let $\pi:X\to Z$ be the natural projection and 
let $\mathcal O_X(1)$ be the canonically defined 
$\pi$-ample invertible sheaf. 
Let $Z_0$ and $Z_1$ be the sections 
defined by the following surjections respectively 
$$\mathcal O_Z\oplus \mathcal O_Z(1)\to \mathcal O_Z\to 0$$ 
$$\mathcal O_Z\oplus \mathcal O_Z(1)\to \mathcal O_Z(1)\to 0.$$
By the definition of $Z_0$ and $Z_1$, 
we have $\mathcal O_X(1)|_{Z_0}=\mathcal O_{Z_0}$ and 
$\mathcal O_X(1)|_{Z_1}=\mathcal O_{Z_1}(1)$ where 
$\mathcal O_{Z_1}(1)$ 
is a very ample invertible sheaf defined by 
$Z\simeq Z_1$ and $\mathcal O_{Z}(1)$. 
By the same argument as \cite[Chapter~V, Proposition~2.6]{Hartshorne}, 
we see $\mathcal O_X(Z_1)\simeq \mathcal O_X(1)$. 
Moreover, by a direct calculation, 
we see that $Z_0$ is the exceptional locus of $f$.
(See \cite[Chapter~V, Example~2.11.4]{Hartshorne}.) 
Note that $Z_0$ and $Z_1$ are disjoint because $\mathcal O_X(1)|_{Z_0}=\mathcal O_{Z_0}$. 

We fix a small positive rational number 
$\epsilon_0\in \mathbb{Q}_{>0}$
such that the \Q-divisor 
$$A:=\pi^*A_Z+\epsilon_0 Z_1$$
is ample. 
Set 
$$\Delta:=(1-\epsilon_0) Z_1+Z_0.$$
Note that $\Delta+A$ is a \Z-divisor. 
Let $A_Y$ be an ample invertible sheaf on $Y$ 
and let $$B:=f^*A_Y.$$
Consider the exact sequence: 
\begin{eqnarray*}
0&\to& \mathcal O_X(K_X+\Delta+A+mB-Z_0)\\
&\to& \mathcal O_X(K_X+\Delta+A+mB)\\
&\to& \mathcal O_{Z_0}(K_X+\Delta+A+mB)\to 0.
\end{eqnarray*}
First let us calculate $H^2(X, 
\mathcal O_X(K_X+\Delta+A+mB-Z_0))$: 
\begin{eqnarray*}
&&H^2(X, K_X+\Delta+A+mB-Z_0)\\
&=&H^2(X, K_X+(1-\epsilon_0)Z_1+A+mB)=0
\end{eqnarray*}
for an arbitrary large integer $m\gg 0$ 
by Theorem~\ref{hn-1vanish}. 
Second let us calculate 
$H^1(Z_0, \mathcal O_{Z_0}(K_X+\Delta+A+mB))$. 
Since $f(Z_0)$ is one point, 
we see 
\begin{eqnarray*}
&&\mathcal O_{Z_0}(K_X+\Delta+A+mB)\\
&=&\mathcal O_{Z_0}(K_X+Z_1+Z_0+\pi^*A_Z+mf^*A_Y)\\
&=&\mathcal O_{Z_0}(K_{Z_0}+\pi^*A_Z)
\end{eqnarray*}
for an arbitrary integer $m\in \mathbb{Z}.$ 
By $Z_0 \simeq Z$, we see 
\begin{eqnarray*}
H^1(Z_0, K_X+\Delta+A+mB)&=&
H^1(Z_0, K_{Z_0}+\pi^*A_Z)\\
&=&H^1(Z, K_{Z}+A_Z)\\
&\neq&0.
\end{eqnarray*}
Therefore we obtain 
$$H^1(X, K_X+\Delta+A+mB)\neq 0$$
for an arbitrary large integer $m\gg 0$. 
This is what we want to show. 
\end{proof}

\begin{ex}[cf. Theorem~\ref{h1proto}]
\label{cexnu=2}
There exists a smooth projective 3-fold $W$, 
an ample \Z-divisor $A_W$ and 
a semi-ample \Z-divisor $N_W$ with $\nu(W, N_W)=2$ 
which satisfy the following property. 

There exists a positive integer $m_0$ 
such that for an arbitrary integer $m\geq m_0$ 
$$H^1(W, K_W+A_W+mN_W)\neq 0.$$
\end{ex}

\begin{proof}[Construction]
By Proposition~\ref{cycliccover} and 
Step~1 in the proof of Theorem~\ref{kvvtype}, 
it is sufficient that we construct 
$$H^1(W, K_W+\Delta_W+A_W+mN_W)\neq 0$$ 
for an ample \Q-divisor $A_W$ and 
a simple normal crossing \Q-divisor $\Delta_W$ such that 
$0\leq \Delta_W\leq 1$ and that $\Delta_W+A_W$ is a \Z-divisor. 

We use the same notations 
$X, Y, Z, A,\cdots$ as Example~\ref{cexnu=3}. 
Let $y_0\in Y$ be the vertex as a projective cone. 
There exists a finite morphism 
$\theta:Y\to \mathbb{P}^3.$ 
Fix an open dense subset $\mathbb{A}^3\subset \mathbb{P}^3$ 
such that $\theta(y_0)\in \mathbb{A}^3.$ 
Take an arbitrary projection 
$\mathbb{A}^3\to \mathbb{A}^2=:U$ and 
fix its projectivication 
$U\subset \mathbb{P}^2=:P$. 
Now, we have the following morphisms 
\begin{eqnarray*}
X\overset{f}\to Y\overset{\theta}\to\mathbb{P}^3\supset \mathbb{A}^3 
\to\mathbb{A}^2=U\subset \mathbb{P}^2=P.
\end{eqnarray*}
Here, by considering 
the composition of the above dominant rational maps, 
we obtain a dominant rational map 
$g:X\dashrightarrow P$. 
Note that this is a morphism on 
$(\theta\circ f)^{-1}(\mathbb{A}^3)$. 
By its construction, 
we see $Z_0\subset 
(\theta\circ f)^{-1}(\mathbb{A}^3)$ 
since $f(Z_0)=y_0.$ 
By taking a log resolution 
of indeterminacy $h:W\to X$, 
we obtain a surjective morphism $l:W\to P$ 
from a smooth projective $3$-fold $W$ such that 
$h(\Ex (h))\subset X\setminus 
(\theta\circ f)^{-1}(\mathbb{A}^3)$ and 
$h^{-1}(Z_1)\cup\Ex (h)$ is simple normal crossing 
(cf. \cite{CP}). 
Then $h:h^{-1}(Z_0)\to Z_0$ is an isomorphism and 
let $Z_W:=h^{-1}(Z_0).$ 
Let $$A_W:=h^*A-E=h^*\pi^*A_Z+\epsilon_0 h^*Z_1-E$$ be an ample \Q-divisor on $W$ where 
$E$ is an $h$-exceptional \Q-divisor with $0\leq E<1$. 
Note that we can find such a divisor $E$ by \cite[Lemma~2.62]{KM}. 
Let 
$$\Delta_W:=Z_W+\ulcorner A_W\urcorner-A_W.$$
Since $\Supp (h^*Z_1\cup \Ex(h))$ 
is disjoint from $Z_W$ we see that 
$$\mathcal O_{Z_W}(\Delta_W+A_W)=\mathcal O_{Z_W}(Z_W+h^*\pi^*A_Z).$$
Let $A_P$ be an ample \Z-divisor on $P$ and 
let
$$N_W:=l^*A_P.$$
Consider the exact sequence: 
\begin{eqnarray*}
0&\to& \mathcal O_W(K_W+\Delta_W+A_W+mN_W-Z_W)\\
&\to& \mathcal O_W(K_W+\Delta_W+A_W+mN_W)\\
&\to& \mathcal O_{Z_W}(K_W+\Delta_W+A_W+mN_W)\to 0.
\end{eqnarray*}
Then by the same calculation as Example~\ref{cexnu=3}, 
we obtain the desired result. 
\end{proof}

\section*{Acknowledgement}
The author would like to 
thank Professor Osamu Fujino 
for many comments and discussions. 
He thanks Professor Atsushi Moriwaki 
for warm encouragement.

\end{document}